\theoremstyle{plain}
\newtheorem{defn}{Definition}[section]
\newtheorem{thm}{Theorem}[section]
\newtheorem{cor}[thm]{Corollary}
\newtheorem{lem}[thm]{Lemma}
\newtheorem{prop}[thm]{Proposition}
\newtheorem{rem}{Remark}[section]
\numberwithin{equation}{section}
\newcommand{\dv}{{\rm div\,}}
\newcommand{\eps}{{\epsilon}}
\newcommand{\dd}{{\ \rm d}}
\begin{document}

\title[Dissipative solutions for the compressible Navier-Stokes equations]
{Dissipative solutions to the compressible isentropic Navier-Stokes equations}

\author[Liang Guo]{Liang Guo}
\address[Liang Guo]{Department of Mathematics, Nanjing University, Nanjing
 210093, P. R. China}
\email{guoliang142857@163.com}

\author[Fucai Li]{Fucai Li}
\address[Fucai Li]{Department of Mathematics, Nanjing University, Nanjing
 210093, P. R. China}
\email{fli@nju.edu.cn}

\author[Cheng Yu]{Cheng Yu}
\address[Cheng Yu]{Department of Mathematics, University of Florida, Gainesville, FL 32611, United States of America}
\email{chengyu@ufl.edu}

\begin{abstract}

The existence of dissipative solutions to the compressible isentropic Navier-Stokes equations  was established in this paper.  This notion was inspired by the concept of dissipative solutions to the incompressible Euler equations of Lions (\cite{Lions-1996}, Section 4.4). Our method is to
recover such solutions by passing to the limits from  approximated solutions, thanks to  compactness argument.

\end{abstract}

\keywords{Compressible isentropic Navier-Stokes equations, Dissipative solutions, weak-strong uniqueness}

\subjclass[2000]{35Q30, 35B40}

\maketitle

\section{Introduction}\label{Sec:intro}

\setcounter{equation}{0}
 \indent \allowdisplaybreaks

This paper aims to study the existence of dissipation solutions to the compressible Navier-Stokes equations, which was inspired by the work of Lions \cite{Lions-1996}.
In particular, Lions introduced the concept of dissipation solutions to the incompressible Euler equations and proved its existence. This can imply the property of weak-strong uniqueness of the Euler equations. In this paper, we are particularly interested in extending these results to the compressible Navier-Stokes equations.

Thus, we consider the following  compressible isentropic Navier-Stokes equations over $\mathbb{R}^{+} \times \Omega$ $( \Omega \subset \mathbb{R}^3 )$:
\begin{align}\label{com-NS}
\left\{
 \begin{array}{ll}
  \partial_{t}\rho + \dv (\rho{\bf u})=0 \,, \\
  \partial_{t}(\rho {\bf u}) + \dv ( \rho {\bf u} \otimes {\bf u} )+\nabla {\rm p} (\rho) = \dv \mathbb{S}(\nabla{\bf u}) \,,
 \end{array}\right.
\end{align}
where $\rho$ denotes the density, ${\bf u} \in \mathbb{R}^3$ the velocity and $ {\rm p} (\rho) = A \rho^\gamma $ the pressure with the constant $A > 0$ and the adiabatic exponent $\gamma > 1$, respectively. The viscous stress tensor $\mathbb{S}$ satisfies the Newton's rheological law:
\begin{equation*}
  \mathbb{S}( \nabla {\bf u} ) = \mu ( \nabla {\bf u} + \nabla^{\top} {\bf u}) + \lambda ( \dv {\bf u} ) {\bf I}_{3} \,,
\end{equation*}
where the constants $\mu$ and $\lambda$ are the Lam\'{e} viscosity coefficients of the flow satisfying $\mu > 0$ and $2 \mu + 3 \lambda \geq 0$, and ${\bf I}_{3}$ is the $3 \times 3$ identity matrix. The equations \eqref{com-NS} are supplement with the initial data
\begin{equation}\label{initial}
  \begin{aligned}
    ( \rho, \rho {\bf u} ) |_{ t = 0 } = ( \rho_0, {\bf m}_0 ) \,,
  \end{aligned}
\end{equation}
and as one of the following boundary conditions:

1. the periodic case
\begin{equation}\label{boundary1}
  \begin{aligned}
    \Omega = \mathbb{T}^3 = \mathbb{R}^3/(-\pi, \pi)^3 \,;
  \end{aligned}
\end{equation}

2. the Dirichlet boundary condition
\begin{equation}\label{boundary2}
  \begin{aligned}
    {\bf u} |_{ \partial \Omega } = 0,
  \end{aligned}
\end{equation}
where $\Omega$ is a bounded domain in $\mathbb{R}^3.$

For the weak solutions of the compressible Navier-Stokes equations,
 Lions \cite{Lions-1998} introduced the concept of renormalized solutions. This allows him to establish
the global existence of weak solutions with large data for any $\gamma\geq \frac{9}{5}$. Later, his result was improved in \cite{FNP-jmfm-2001} by extending the value of the adiabatic exponent  to $\gamma>\frac{3}{2}.$
Improving the range of  $\gamma$
is an interesting and fundamental problem, which certainly is from a physical viewpoint. It is also a challenging problem in mathematics, since the restriction on $\gamma>\frac{3}{2}$
is absolutely essential to the analysis in \cite{Lions-1998, FNP-jmfm-2001}. This current paper aims to build up solutions in a weaker sense than in the renormalized sense,  which was inspired by the concept of  dissipation solutions in Lions  \cite{Lions-1996}.

DiPerna and Majda \cite{DM-cmp-1987} proposed a measure-valued solution, by a generalized Young measure and proved the global existence of such solution to the incompressible Euler equations with any initial data. However, they have not investigated the weak-strong uniqueness structure. The concept of dissipation solutions to the incompressible Euler equations was introduced and its existence was given in  \cite{Lions-1996}. This solution can imply the
 weak-strong uniqueness property. Meanwhile,  Bellout et al. \cite{BCN-siam-2002} also proposed  a \emph{very weak} $L^2$ solution.  Later, Brenier et al. \cite{BDS-2011}  established the weak-strong uniqueness of the admissible measure-valued solutions to the incompressible Euler equations, and showed  that the admissible measure-valued solution is also a dissipative solution in the sense of Lions.

The admissible assumption is to say that the kinetic energy is always less than or equal to the initial energy, which plays a key role in  \cite{BDS-2011}. Under the admissible assumption, De Lellis and Sz\'{e}kelyhidi (\cite{DS-arma-2010}, Proposition 1) proved that the weak solution of incompressible Euler equations is a dissipative solution in the sense of Lions. Gwiazda et al. \cite{GSW-nonlinearity-2015} extended the measure-valued solutions to some compressible fluid models and proved the weak-strong uniqueness of the admissible measure-valued solutions to the isentropic Euler equations in any space dimension. And afterwards, Feireisl et al. \cite{FGSW-cv-2016} introduced a dissipative measure-valued solution to the compressible barotropic Navier-Stokes system, and proved the existence for the adiabatic exponent $\gamma > 1$ and the weak-strong uniqueness property of the dissipative measure-valued solutions.

The main contribution of this paper is to extend the notion of dissipative solutions to the incompressible Euler equations in \cite{Lions-1996} to the equations of the compressible fluids. To compared to the incompressible case, we have to consider the density effect and the mass equation. To this end, we define the following two smooth functions
\begin{equation}\label{E1-2}
  \begin{aligned}
    E_1 ( r, {\bf v} ) = \partial_t r + \dv ( r {\bf v} ) \,, \quad E_2 ( r, {\bf v} ) = r \partial_t {\bf v} + r {\bf v} \cdot \nabla {\bf v} + \nabla {\rm p} (r) - \dv \mathbb{S} ( \nabla {\bf v} ) \,,
  \end{aligned}
\end{equation}
where $r$ (has a positive lower bound $r_1$) and ${\bf v}$ be two smooth functions on $[0, \infty) \times \Omega$. This allows us to derive a priori relative entropy \eqref{relative-energy-ineq}. It is crucial to have  \eqref{relative-energy-ineq} for giving the definition of  dissipative solutions to the compressible Navier-Stokes equations,  which implies the weak-strong uniqueness property.

The main idea is to build up the solutions of  the modified Brenner model \eqref{approxi-system}, and to show that this approximated solution is also a dissipative solution to  \eqref{approxi-system}. Then we can recover our solution by passing to the limits from this dissipative solution, thanks to  the compactness argument.

This paper is organized as follows. In Section \ref{Sec:def-result}, we derive some a priori estimates,  introduce the definition of dissipative solutions to the compressible isentropic Navier-Stokes equations and state our main results: the existence of dissipative solutions (Theorem 2.1) and the weak solution is also a dissipative solution (Theorem 2.3). In Section \ref{Sec:regular}, we show that the smooth functions $r$ and ${\bf v}$ can be replaced by a class of functions with lower regularities. This can be done through  the regularization procedure. In Section \ref{Sec:thm-exist}, we give the proof of Theorem \ref{thm-existence-dissipative-sol} by using compactness analysis on the approximated solutions. Section \ref{Sec:ws-ds} is the proof of Theorem \ref{thm-weak-dissipative}. Note that Corollary \ref{weak-strong-ds} and Corollary \ref{weak-strong-ws} are  direct conclusions of Theorem \ref{thm-existence-dissipative-sol} and Theorem \ref{thm-weak-dissipative}, respectively, thus we omit their proofs. In Appendix \ref{Sec:appendix}, we collect some auxiliary lemmas.

\section{Definition of dissipative solutions and the main results}\label{Sec:def-result}
The main goal of this section is to define our  dissipative solutions to the compressible Navier-Stokes equations and address our main results. To this end, we start with deriving some a priori estimates which are crucial to our definition. Thus, we  assume the solutions are smooth here.

Multiplying the continuity equation $\eqref{com-NS}_1$ by $\frac{1}{2} |{\bf v}|^2 - {\rm P}'(r)$ (${\rm P}'(r) = \frac{A \gamma}{\gamma-1} r^{\gamma - 1}$), integrating the result over $\Omega$, taking the inner product of the momentum equation $\eqref{com-NS}_2$ with ${\bf u} - {\bf v}$, by integration by parts, and adding them up gives
\begin{align}\label{energy}
    & {\rm \frac{d}{dt}} \int \frac{1}{2} \rho | {\bf u} - {\bf v} |^2 + {\rm P}(\rho) - {\rm P}'(r) \rho \dd x + \int \mathbb{S} (\nabla {\bf u} ) : \nabla {\bf u} \dd x \nonumber\\
    =& - \int \rho ( {\bf u} - {\bf v} ) \cdot \nabla {\bf v} \cdot ( {\bf u} - {\bf v} ) \dd x + \int \rho ( \partial_t {\bf v} + {\bf v} \cdot \nabla {\bf v} ) \cdot ( {\bf v} - {\bf u} ) \dd x \nonumber\\
    & - \int \rho \partial_t {\rm P}'(r) \dd x - \int \rho {\bf u} \cdot \nabla {\rm P}'(r) \dd x \nonumber\\
    & - \int {\rm p}(\rho) \dv {\bf v} \dd x + \int \mathbb{S} (\nabla {\bf u}) : \nabla {\bf v} \dd x \,,
\end{align}
where ${\rm P}(s) = \frac{A}{\gamma-1} s^\gamma$. Note that  ${\rm P}'(r) r - {\rm P}(r) = {\rm p}(r) = A r^{\gamma}$ and ${\rm P}''(r) r = {\rm p}'(r)$, this gives
\begin{align}\label{pressure}
    {\rm \frac{d}{dt}} \int {\rm P}'(r) r - {\rm P}(r) \dd x = & \int \partial_t {\rm p}(r) + \dv ( {\rm p}(r) {\bf v} ) \dd x \nonumber\\
    = & \int {\rm p}'(r) \partial_t r + {\bf v} \cdot \nabla {\rm p}(r) + {\rm p}(r) \dv {\bf v} \dd x \nonumber\\
    = & \int r \partial_t {\rm P}'(r) \dd x + \int r {\bf v} \cdot \nabla {\rm P}'(r) + {\rm p}(r) \dv {\bf v} \dd x \,.
\end{align}

Using\eqref{E1-2},   \eqref{energy} and \eqref{pressure}, one obtains that
\begin{align}\label{relative-energy-E1-2}
    & {\rm \frac{d}{dt}} \int \frac{1}{2} \rho | {\bf u} - {\bf v} |^2 + {\rm P}(\rho) - {\rm P}'(r)( \rho - r ) - {\rm P}(r) \dd x + \int \mathbb{S} (\nabla {\bf u} - \nabla {\bf v} ) : \nabla ( {\bf u} - {\bf v} ) \dd x \nonumber\\
    \leq & - \int \rho ( {\bf u} - {\bf v} ) \cdot \mathbb{D} ( {\bf v} ) \cdot ( {\bf u} - {\bf v} ) \dd x - \int ( {\rm p}(\rho) - {\rm p}'(r) (\rho - r) - {\rm p}(r) ) \dv {\bf v} \dd x \nonumber\\
    & + \int ( r - \rho ) {\rm P}''(r) E_1 (r, {\bf v}) \dd x + \int \frac{\rho}{r} E_2 (r, {\bf v}) \cdot ( {\bf v} - {\bf u} ) \dd x \nonumber\\
    & + \int \frac{\rho - r}{r} \dv \mathbb{S} (\nabla {\bf v}) \cdot ( {\bf v} - {\bf u} ) \dd x \,,
\end{align}
where $\mathbb{D} ( {\bf v} ) = \frac{1}{2} ( \nabla {\bf v} + \nabla^\top {\bf v} )$, and we have used \begin{equation*}
  \begin{aligned}
    ( {\bf b} \cdot \nabla ) {\bf a} \cdot {\bf b} = \frac12 {\bf b} \cdot ( \nabla {\bf a} + \nabla^\top {\bf a} ) \cdot {\bf b} \,,
  \end{aligned}
\end{equation*}
for the vectors ${\bf a}$ and ${\bf b}$.

Next we look at the last term on the left-hand side of \eqref{relative-energy-E1-2} for the two boundary cases \eqref{boundary1} and \eqref{boundary2}.

For the periodic case $\Omega = \mathbb{T}^3$, using Lemma \ref{Korn-type-ineq} in Appendix \ref{Sec:appendix} gives
\begin{equation}\label{u-v-H1-torus}
  \begin{aligned}
    \| {\bf u} - {\bf v} \|_{H^1} \leq c_1 \big ( \| \nabla ( {\bf u} - {\bf v} ) \|_{L^2} + \| \sqrt{\rho} ( {\bf u} - {\bf v} ) \|_{L^2} \big ) \,,
  \end{aligned}
\end{equation}
where $c_1 > 0$ is a constant dependent of $\gamma$, and $\gamma \geq \frac{6}{5}$ for $n = 3$ from Lemma \ref{Korn-type-ineq}.

For the bounded domain $\Omega$ with Dirichlet boundary condition, it needs the extra restriction ${\bf v}|_{\partial \Omega} = 0$. The  Poincar\'{e}'s inequality yields
\begin{equation}\label{u-v-H1-bdd}
  \begin{aligned}
    \| {\bf u} - {\bf v} \|_{H^1} \leq c_2 \| \nabla ( {\bf u} - {\bf v} ) \|_{L^2} \,,
  \end{aligned}
\end{equation}
where $c_2 > 0$ is a constant. Thus, we only need $\gamma > 1$ in this situation.

By integration by parts, one has
\begin{equation}\label{S(u-v)}
  \begin{aligned}
    \int \mathbb{S} (\nabla {\bf u} - \nabla {\bf v} ) : \nabla ( {\bf u} - {\bf v} ) \dd x = \mu \| \nabla ( {\bf u} - {\bf v} ) \|_{L^2}^2 + ( \mu + \lambda ) \| \dv ( {\bf u} - {\bf v} ) \|_{L^2}^2 \,.
  \end{aligned}
\end{equation}

Using Sobolev embedding $H^1 \hookrightarrow L^6$ with the generic constant $\hat{c}$, and the estimates \eqref{u-v-H1-torus}, \eqref{u-v-H1-bdd} and \eqref{S(u-v)}, we have
\begin{align}\label{u-v-L6-torus}
    \| {\bf v} - {\bf u} \|_{L^6}^2 \leq & \hat{c} \| {\bf v} - {\bf u} \|_{H^1}^2 \leq 2 \hat{c} c_{\gamma}^2 \| \nabla ( {\bf u} - {\bf v} ) \|_{L^2}^2 + 2 \hat{c} c_{\gamma}^2 \| \sqrt{\rho} ( {\bf u} - {\bf v} ) \|_{L^2}^2 \nonumber\\
    \leq & \frac{ 2 \hat{c} c_{\gamma}^2 }{ \mu } \int \mathbb{S} (\nabla {\bf u} - \nabla {\bf v} ) : \nabla ( {\bf u} - {\bf v} ) \dd x + 2 \hat{c} c_{\gamma}^2 \| \sqrt{\rho} ( {\bf u} - {\bf v} ) \|_{L^2}^2
\end{align}
for the periodic case \eqref{boundary1}, and
\begin{equation}\label{u-v-L6-bdd}
  \begin{aligned}
    \| {\bf v} - {\bf u} \|_{L^6}^2 \leq & \hat{c} \| {\bf v} - {\bf u} \|_{H^1}^2 \leq \frac{ 2 \hat{c} c_{\gamma}^2 }{ \mu } \int \mathbb{S} (\nabla {\bf u} - \nabla {\bf v} ) : \nabla ( {\bf u} - {\bf v} ) \dd x
  \end{aligned}
\end{equation}
for the Dirichlet boundary case \eqref{boundary2}. Here, $c_{\gamma} = \max \{ c_1, c_2 \}$.

In view of the convexity of ${\rm P}(\rho)$ with $\gamma > 1$ and $r \geq r_1 > 0$, it holds that
\begin{equation}\label{convexity-P}
  \begin{aligned}
    {\rm P}(\rho) - {\rm P}'(r) ( \rho - r ) - {\rm P}(r) \geq
    \left\{
    \begin{array}{ll}
    C ( 1 + |\rho - r|^2 ), \quad\, \frac{1}{2} r \leq \rho \leq \frac{3}{2} r \,, \\
    C ( 1 + |\rho - r|^\gamma ), \,\quad 0 \leq \rho < \frac{1}{2} r, \, \rho > \frac{3}{2} r \,.
    \end{array}\right.
  \end{aligned}
\end{equation}

For the convenience of notations, letting $f$ be a given function and the bold ${\bf 1}$ be characteristic function, we denote
\begin{align*}
    & \Omega_{\rm ess} = \Big \{ x \in \Omega : | \rho - r | \leq \frac12 r \Big \} \,, \quad \Omega_{\rm res} = \Big \{ x \in \Omega : 0 \leq \rho < \frac{1}{2} r \,, \; \rho > \frac{3}{2} r \Big \} \,, \\
    & f = [f]_{\rm ess} + [f]_{\rm res} \,, \quad [f]_{\rm ess} = f {\bf 1}_{ \Omega_{\rm ess} } \,, \quad [f]_{\rm res} = f {\bf 1}_{ \Omega_{\rm res} } \,.
\end{align*}

Utilizing the above notations, and by H\"{o}lder's inequality, one deduces that
\begin{align}\label{rSuv}
    & \int \frac{\rho - r}{r} \dv \mathbb{S} (\nabla {\bf v}) \cdot ( {\bf v} - {\bf u} ) \dd x \nonumber\\
    = & \int \frac{1}{r} ( \sqrt{\rho} - \sqrt{r} ) ( \sqrt{\rho} + \sqrt{r} ) \dv \mathbb{S} (\nabla {\bf v}) \cdot ( {\bf v} - {\bf u} ) \dd x \nonumber\\
    = & \int_{ \Omega_{\rm ess} + \Omega_{\rm res} } \frac{ \sqrt{\rho} - \sqrt{r} }{ \sqrt{r} } \dv \mathbb{S} (\nabla {\bf v}) \cdot ( {\bf v} - {\bf u} ) \dd x \nonumber\\
    & + \int_{ \Omega_{\rm ess} + \Omega_{\rm res} } \frac{ \sqrt{\rho} - \sqrt{r} }{r} \dv \mathbb{S} (\nabla {\bf v}) \cdot \sqrt{\rho} ( {\bf v} - {\bf u} ) \dd x \nonumber\\
    \leq & \int_{ \Omega_{\rm ess} + \Omega_{\rm res} } \frac{1}{ \sqrt{r} } | \rho - r |^{\frac12} | \dv \mathbb{S} (\nabla {\bf v}) | | {\bf v} - {\bf u} | \dd x \nonumber\\
    & + \int_{ \Omega_{\rm ess} + \Omega_{\rm res} } \frac{1}{r} | \rho - r |^{\frac12} | \dv \mathbb{S} (\nabla {\bf v}) | | \sqrt{\rho} ( {\bf v} - {\bf u} ) | \dd x \nonumber\\
    \leq & \frac{ \sqrt{2} }{2} \| \dv \mathbb{S} (\nabla {\bf v}) \|_{ L^{ \frac{6}{5} } } \| {\bf v} - {\bf u} \|_{ L^6 } \nonumber\\
    & + \Big \| \frac{1}{ \sqrt{r} } \Big \|_{L^\infty} \| [ \rho - r ]_{\rm res} \|_{ L^\gamma }^{ \frac12 } \| \dv \mathbb{S} (\nabla {\bf v}) \|_{ L^{ \frac{6\gamma}{5\gamma - 3} } } \| {\bf v} - {\bf u} \|_{ L^6 } \nonumber\\
    & + \Big \| \frac{1}{ \sqrt{2r} } \Big \|_{L^\infty} \| \dv \mathbb{S} (\nabla {\bf v}) \|_{ L^2 } \| \sqrt{\rho} ( {\bf v} - {\bf u} ) \|_{ L^2 } \nonumber\\
    & + \Big \| \frac{1}{r} \Big \|_{L^\infty} \| [ \rho - r ]_{\rm res} \|_{ L^\gamma }^{ \frac12 } \| \dv \mathbb{S} (\nabla {\bf v}) \|_{ L^{ \frac{2\gamma}{\gamma - 1} } } \| \sqrt{\rho} ( {\bf v} - {\bf u} ) \|_{ L^2 } \,,
\end{align}
where we have also employed the elementary inequality
\begin{equation*}
  \begin{aligned}
    | \rho^\theta - r^\theta | \leq |\rho - r|^\theta \,, \;\; 0 \leq \theta \leq 1 \,,
  \end{aligned}
\end{equation*}
with the special one $\theta = \frac12$. Noticing that $r \geq r_1 >0$, with the help of \eqref{u-v-L6-torus}, \eqref{u-v-L6-bdd}, \eqref{convexity-P} and Young's inequality, we have
\begin{align}\label{r-S-u-v-torus}
    & \int \frac{\rho - r}{r} \dv \mathbb{S} (\nabla {\bf v}) \cdot ( {\bf v} - {\bf u} ) \dd x \nonumber\\
    \leq & \frac{ \mu }{ 8 \hat{c} c_{\gamma}^2 } \| {\bf v} - {\bf u} \|_{ L^6 }^2 + \frac{ \hat{c} c_{\gamma}^2 }{ \mu } \| \dv \mathbb{S} (\nabla {\bf v}) \|_{ L^{ \frac{6}{5} } }^2 \nonumber\\
    & + \frac{ \mu }{ 8 \hat{c} c_{\gamma}^2 } \| {\bf v} - {\bf u} \|_{ L^6 }^2 + \frac{ 2 \hat{c} c_{\gamma}^2 }{ \mu r_1 } \| \dv \mathbb{S} (\nabla {\bf v}) \|_{ L^{ \frac{6\gamma}{5\gamma - 3} } }^2 \| [ \rho - r ]_{\rm res} \|_{ L^\gamma } \nonumber\\
    & + \frac{1}{ 2 \sqrt{2 r_1} } \| \dv \mathbb{S} (\nabla {\bf v}) \|_{ L^2 } ( 1 + \| \sqrt{\rho} ( {\bf v} - {\bf u} ) \|_{ L^2 }^2 ) \nonumber\\
    & + \frac{1}{r_1} \| \dv \mathbb{S} (\nabla {\bf v}) \|_{ L^{ \frac{2\gamma}{\gamma - 1} } } ( \| \sqrt{\rho} ( {\bf v} - {\bf u} ) \|_{ L^2 }^2 + \| [ \rho - r ]_{\rm res} \|_{ L^\gamma } ) \nonumber\\
    \leq & \frac{1}{2} \int \mathbb{S} (\nabla {\bf u} - \nabla {\bf v} ) : \nabla ( {\bf u} - {\bf v} ) \dd x + \frac{\mu}{2} \int \rho | {\bf u} - {\bf v} |^2 \dd x \nonumber\\
    & + C \frac{ \hat{c} c_{\gamma}^2 }{ \mu r_1 } \| \dv \mathbb{S} (\nabla {\bf v}) \|^2_{ L^{ \frac{6 \gamma}{5\gamma - 3} } } \int {\rm P}(\rho) - {\rm P}'(r)( \rho - r ) - {\rm P}(r) \dd x \nonumber\\
    & + C \frac{ 1 + \sqrt{r_1} }{r_1} \| \dv \mathbb{S} (\nabla {\bf v}) \|_{ L^{ \frac{2 \gamma}{\gamma - 1} } } \int \frac{1}{2} \rho | {\bf u} - {\bf v} |^2 + {\rm P}(\rho) - {\rm P}'(r)( \rho - r ) - {\rm P}(r) \dd x
\end{align}
under the case \eqref{boundary1}, and
\begin{align}\label{r-S-u-v-bdd}
    & \int \frac{\rho - r}{r} \dv \mathbb{S} (\nabla {\bf v}) \cdot ( {\bf v} - {\bf u} ) \dd x \nonumber\\
    \leq & \frac{1}{2} \int \mathbb{S} (\nabla {\bf u} - \nabla {\bf v} ) : \nabla ( {\bf u} - {\bf v} ) \dd x \nonumber\\
    & + C \frac{ \hat{c} c_{\gamma}^2 }{ \mu r_1 } \| \dv \mathbb{S} (\nabla {\bf v}) \|^2_{ L^{ \frac{6 \gamma}{5\gamma - 3} } } \int {\rm P}(\rho) - {\rm P}'(r)( \rho - r ) - {\rm P}(r) \dd x \nonumber\\
    & + C \frac{ 1 + \sqrt{r_1} }{r_1} \| \dv \mathbb{S} (\nabla {\bf v}) \|_{ L^{ \frac{2 \gamma}{\gamma - 1} } } \int \frac{1}{2} \rho | {\bf u} - {\bf v} |^2 + {\rm P}(\rho) - {\rm P}'(r)( \rho - r ) - {\rm P}(r) \dd x
\end{align}
under the case \eqref{boundary2}.

Putting \eqref{r-S-u-v-torus} and \eqref{r-S-u-v-bdd} into \eqref{relative-energy-E1-2}, respectively, it follows that
\begin{equation}\label{relative-energy-dt}
  \begin{aligned}
    & {\rm \frac{d}{dt}} \int \frac{1}{2} \rho | {\bf u} - {\bf v} |^2 + {\rm P}(\rho) - {\rm P}'(r)( \rho - r ) - {\rm P}(r) \dd x \\
    & + \frac{1}{2} \int \mathbb{S} (\nabla {\bf u} - \nabla {\bf v} ) : \nabla ( {\bf u} - {\bf v} ) \dd x \\
    \leq & C_0 \Lambda ( {\bf v} ) \int \frac{1}{2} \rho | {\bf u} - {\bf v} |^2 + {\rm P}(\rho) - {\rm P}'(r) (\rho - r) - {\rm P}(r) \dd x \\
    & + \int \big | ( r - \rho ) {\rm P}''(r) E_1 (r, {\bf v}) \big | \dd x + \int \Big | \frac{\rho}{r} E_2 (r, {\bf v}) \cdot ( {\bf v} - {\bf u} ) \Big | \dd x \,,
  \end{aligned}
\end{equation}
where $C_0 > 0$ is a generic constant and
\begin{equation}\label{Lam(v)}
  \begin{aligned}
    \Lambda ( {\bf v} ) =
    \left\{
    \begin{array}{ll}
    \big ( \mu + \Lambda_0 ( {\bf v} ) \big ) \,, \, & \textrm{ for the case } \eqref{boundary1} \,, \\
   \big ( \Lambda_0 ( {\bf v} ) \big ) \,, \, & \textrm{ for the case } \eqref{boundary2} \,,
    \end{array}\right.
  \end{aligned}
\end{equation}
with
\begin{equation*}
    \Lambda_0 ( {\bf v} ) = \| \mathbb{D} ( {\bf v} ) \|_{L^\infty} + \frac{ \hat{c} c_{\gamma}^2 }{ \mu r_1 } \| \dv \mathbb{S} (\nabla {\bf v}) \|^2_{ L^{ \frac{6 \gamma}{5\gamma - 3} } } + \frac{ 1 + \sqrt{r_1} }{r_1} \| \dv \mathbb{S} (\nabla {\bf v}) \|_{ L^{ \frac{2 \gamma}{\gamma - 1} } } \,.
\end{equation*}

Applying Gr\"{o}nwall's inequality to \eqref{relative-energy-dt}, we obtain that, for all $t \geq 0$,
\begin{align}\label{relative-energy-ineq}
    {\rm LS} (\rho, {\bf u}; r, {\bf v}) : = & \int \frac{1}{2} \rho | {\bf u} - {\bf v} |^2 + {\rm P}(\rho) - {\rm P}'(r)( \rho - r ) - {\rm P}(r) \dd x \nonumber\\
    & + \frac{1}{2} \int_0^t \int \mathbb{S} (\nabla {\bf u} - \nabla {\bf v} ) : \nabla ( {\bf u} - {\bf v} ) \dd x \dd s \nonumber\\
    \leq & \exp \Big ( \int_0^t C_0 \Lambda ( {\bf v} ) \dd s \Big ) \int \frac{1}{2} \rho_0 \Big | \frac{{\bf m}_0}{\rho_0} - {\bf v}_0 \Big |^2 + {\rm P}(\rho_0) - {\rm P}'(r_0) (\rho_0 - r_0) - {\rm P}(r_0) \dd x \nonumber\\
    & + \int_0^t \int \exp \Big ( \int_s^t C_0 \Lambda ( {\bf v} ) \dd \tau \Big ) \big | ( r - \rho ) {\rm P}''(r) E_1 (r, {\bf v}) \big | \dd x \dd s \nonumber\\
    & + \int_0^t \int \exp \Big ( \int_s^t C_0 \Lambda ( {\bf v} ) \dd \tau \Big ) \Big | \frac{\rho}{r} E_2 (r, {\bf v}) \cdot ( {\bf v} - {\bf u} ) \Big | \dd x \dd s \nonumber\\
    = : & {\rm RS} (\rho, {\bf u}; r, {\bf v}) \,,
\end{align}
where $( r_0, {\bf v}_0 ) = ( r, {\bf v} ) \big |_{t = 0}$.

With the above a priori estimates at hand, we are ready to define  the dissipative solutions to the compressible isentropic
Navier-Stokes \eqref{com-NS} in the following sense.

\begin{defn}\label{def-dissipative-sol}
Let $\rho \in L^\infty (0, T; L^\gamma) \cap C_w ( [0, T]; L^\gamma )$, $\sqrt{\rho} {\bf u} \in L^\infty (0, T; L^2)$ and ${\bf u} \in L^2(0, T; H^1)$ for any fixed $T > 0$, $( \rho, {\bf u} )$ is a dissipative solution of the problem \eqref{com-NS}-\eqref{boundary1} or \eqref{com-NS}, \eqref{initial} and \eqref{boundary2}, if \eqref{relative-energy-ineq} holds for $(r, {\bf v})$ satisfying
\begin{align}\label{ds-regular1}
\left\{
 \begin{array}{ll}
   r \in C([0, T]; L^\gamma) \,, \quad r \geq r_1 >0 \,, \quad {\bf v} \in C([0, T]; L^{ \frac{2\gamma}{\gamma - 1} }) \,, \\
   \mathbb{D} ( {\bf v} ) \in L^1(0,T; L^\infty) \,, \quad \nabla {\bf v} \in L^2 (0, T; L^2) \,, \\
   \dv \mathbb{S} (\nabla {\bf v}) \in L^2(0,T; L^{ \frac{6 \gamma}{5\gamma - 3} } ) \cap L^1 (0,T; L^{ \frac{2 \gamma}{\gamma - 1} }) \,, \\
   {\rm P}'' (r) E_1(r, {\bf v}) \in L^1(0,T; L^{ \frac{\gamma}{\gamma - 1} } ) \,, \quad \frac{1}{r} E_2(r, {\bf v}) \in L^1(0,T; L^{ \frac{2\gamma}{\gamma - 1} }) \,,
 \end{array}\right.
\end{align}
where $r_1$ is a positive constant. Note that it needs the extra condition ${\bf v} |_{\partial \Omega} = 0$ for the bounded domain case \eqref{boundary2}.
\end{defn}

\begin{rem}\label{grad-v-regularity}
 If $(r,\bf v)$ satisfies \eqref{ds-regular1}, then  \eqref{relative-energy-ineq} is well-defined.
We observe that ${\bf v} \in L^1 (0, T; W^{1,p})$ for any $1 < p \leq \infty$ from \eqref{ds-regular1}. Indeed, in view of the Korn's inequality ${\rm(}$\cite{FN-book-2017}, Theorem 11.21${\rm )}$
\begin{equation*}
  \begin{aligned}
    \| {\bf v} \|_{ W^{1,p} } \leq C \Big ( \| \mathbb{D} ( {\bf v} ) \|_{L^p} + \int | {\bf v} | \dd x \Big ) \,, \;\; 1 < p < \infty \,,
  \end{aligned}
\end{equation*}
it follows that
\begin{equation*}
  \begin{aligned}
    \| {\bf v} \|_{ L^1 (0, T; W^{1,p}) } \leq  C \big ( \| \mathbb{D} ( {\bf v} ) \|_{ L^1 (0, T; L^\infty) } + \| {\bf v} \|_{ C([0, T]; L^{\frac{2\gamma}{\gamma - 1}}) } \big ) \,, \;\; 1 < p < \infty \,.
  \end{aligned}
\end{equation*}
For the case $p = \infty$, notice that on the right-hand side above is independent of $p$, by Lemma \ref{Lp-limit-L-infty}, then $\| {\bf v} \|_{ L^1 (0, T; W^{1,p}) } \rightarrow \| {\bf v} \|_{ L^1 (0, T; W^{1,\infty}) }$ as $p \rightarrow \infty$. Furthermore, it infers that ${\bf v} \cdot \nabla {\bf v} \in L^1 (0, T; L^{ \frac{2\gamma}{\gamma-1} })$.

We also find that $\nabla {\bf v} \in L^2 (0, T; L^6 \cup L^{ \frac{2 \gamma}{\gamma - 1} })$ from \eqref{ds-regular1}. Indeed, by the elliptic regularity result ${\rm (}$see \emph{(50)} in page \emph{389} of \cite{CK-jde-2006}${\rm )}$:
\begin{equation*}
  \begin{aligned}
    \| D^2 {\bf v} \|_{ L^p } \leq  C \big ( \| \mathcal{L} {\bf v} \|_{ L^p } + \| \nabla {\bf v} \|_{ L^p } \big ) \,, \;\; 1 < p < \infty \,,
  \end{aligned}
\end{equation*}
where $\mathcal{L} {\bf v} = - \mu \Delta {\bf v} - (\mu + \lambda) \nabla \dv {\bf v} = - \dv \mathbb{S} ( \nabla {\bf v} )$, together with $\dv \mathbb{S} ( \nabla {\bf v} ) \in L^2 (0, T; L^{ \frac{6 \gamma}{5 \gamma - 3} })$ and $\nabla {\bf v} \in L^2 (0, T; L^2)$ in \eqref{ds-regular1}, then, $D^2 {\bf v} \in L^2 (0, T; L^2)$ for $1 < \gamma < \frac{3}{2}$ and $D^2 {\bf v} \in L^2 (0, T; L^{ \frac{6 \gamma}{5\gamma - 3} })$ for $\gamma \geq \frac{3}{2}$. By the Sobolev embedding $H^1 \hookrightarrow L^6$ for the case $1 < \gamma < \frac{3}{2}$ and $W^{1, \frac{6 \gamma}{5 \gamma - 3}} \hookrightarrow L^q (1 \leq q \leq \frac{2 \gamma}{\gamma - 1})$ for the case $\gamma \geq \frac{3}{2}$, it follows that $\nabla {\bf v} \in L^2 (0, T; L^6 \cup L^{ \frac{2 \gamma}{\gamma - 1} })$.
\end{rem}

\begin{rem}
We observe that the conditions ${\rm P}'' (r) E_1(r, {\bf v}) \in L^1 (0,T; L^{ \frac{\gamma}{\gamma - 1} })$ and $\frac{1}{r} E_2(r, {\bf v}) \in L^1 (0, T; L^{ \frac{2\gamma}{\gamma - 1} })$ are equivalent to $\partial_t {\rm P}' (r) \in L^1 (0,T; L^{ \frac{\gamma}{\gamma - 1} })$ and $\nabla {\rm P}' (r) \in L^1 (0,T; L^{ \frac{2 \gamma}{\gamma - 1} })$ if $\partial_t {\bf v} \in L^1 (0,T; L^{ \frac{2 \gamma}{\gamma - 1} })$. Indeed, $\mathbb{D} ( {\bf v} ) \in L^1 (0, T; L^\infty)$ implies $\dv {\bf v} \in L^1 (0, T; L^\infty)$. Notice that
\begin{equation*}
  \begin{aligned}
    {\rm P}'' (r) E_1(r, {\bf v}) = \partial_t {\rm P}' (r) + {\rm p}'( r) \dv {\bf v} + {\bf v} \cdot \nabla {\rm P}' (r) \,, \\
    \frac{1}{r} E_2 ( r, {\bf v} ) = \partial_t {\bf v} + {\bf v} \cdot \nabla {\bf v} + \nabla {\rm P}' (r) - \frac{1}{r} \dv \mathbb{S} ( \nabla {\bf v} ) \,,
  \end{aligned}
\end{equation*}
along with $ r \in C (0,T; L^\gamma)$, $r \geq r_1 >0$, ${\bf v} \in C([0, T]; L^{ \frac{2\gamma}{\gamma - 1} })$, ${\bf v} \cdot \nabla {\bf v} \in L^1 (0, T; L^{ \frac{2\gamma}{\gamma-1} })$ and $\dv \mathbb{S} (\nabla {\bf v}) \in L^1 (0,T; L^{ \frac{2 \gamma}{\gamma - 1} })$, it infers the observation.
\end{rem}

Here we address our first main result on the existence of dissipative solution in the sense of Definition \ref{def-dissipative-sol}.

\begin{thm}\label{thm-existence-dissipative-sol}
Suppose that the assumptions of Definition \emph{\ref{def-dissipative-sol}} hold. In addition, we assume that
\begin{align}\label{ds-regular2}
\left\{
 \begin{array}{ll}
   r \leq r_2 \textrm{ if } \gamma > 2 \,, \quad  \partial_t r \in L^1 (0, T; L^{ \frac{\gamma}{\gamma - 1} }) \cap L^2 (0, T; L^{ \frac{2 \gamma}{\gamma + 1} } \cap L^{ \frac{6}{5} }) \,, \\
   \nabla r \in L^\infty (0, T; L^{ \frac{2 \gamma}{\gamma - 1} }) \,, \quad \partial_t {\bf v} \in L^1 (0, T; L^{ \frac{2 \gamma}{\gamma - 1} }) \cap L^2 (0, T; L^{ \frac{2 \gamma}{\gamma + 1} } \cap L^{ \frac{6}{5} } ) \,, \\
   \nabla {\bf v} \in L^2 (0, T; L^\infty) \,,
 \end{array}\right.
\end{align}
where $r_2$ is a positive constant. \\
1. For the periodic case \eqref{boundary1}: Let $\gamma \geq \frac{6}{5}$. Assume that the initial data \eqref{initial} satisfy
\begin{equation}\label{periodic-initial}
  \begin{aligned}
    \rho_0 \in L^\gamma \,, \;\; \rho_0 \geq 0 \,, \;\; \int \rho_0 \dd x \geq C_{\rho_0}\,, \;\; {\bf m}_0 = 0 \textrm{ a.e. in } \{ x \in \Omega : \rho_0 = 0 \} \,, \;\; \frac{ {\bf m}_0^2 }{ \rho_0 } \in L^1 \,,
  \end{aligned}
\end{equation}
for some positive constant $C_{\rho_0}$. Then, there exists a dissipative solution of the compressible isentropic Navier-Stokes system \eqref{com-NS} with \eqref{initial} and \eqref{boundary1}. \\
2. For the Dirichlet boundary case \eqref{boundary2}: Let $\gamma > 1$. Assume the initial data \eqref{initial} satisfy
\begin{equation}\label{Dirichlet-initial}
  \begin{aligned}
    \rho_0 \in L^\gamma \,, \;\; \rho_0 \geq 0 \,, \;\; {\bf m}_0 = 0 \textrm{ a.e. in } \{ x \in \Omega : \rho_0 = 0 \} \,, \;\; \frac{ {\bf m}_0^2 }{ \rho_0 } \in L^1 \,.
  \end{aligned}
\end{equation}
Then, there exists a dissipative solution of the compressible isentropic Navier-Stokes system \eqref{com-NS} with \eqref{initial} and \eqref{boundary2}.
\end{thm}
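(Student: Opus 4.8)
The plan is to realize the dissipative solution as a limit of solutions to a regularized (modified Brenner) system. The decisive structural observation is that the coefficient $\Lambda(\mathbf{v})$ in \eqref{relative-energy-ineq} depends only on the test field $\mathbf{v}$, so that after Gr\"onwall the relative energy $\mathrm{LS}$ itself no longer appears on the right-hand side: in the derivation leading to \eqref{relative-energy-dt} every nonlinear contribution of $(\rho,\mathbf{u})$---the convective and the pressure terms---has already been absorbed into the factor $\Lambda(\mathbf{v})\,\mathrm{LS}$. Consequently, to pass to the limit one needs only weak lower semicontinuity of $\mathrm{LS}$ together with weak convergence in the remaining, \emph{affine}, remainder; no strong convergence of the density is required, and this is exactly what opens the full range $\gamma>1$ in the Dirichlet case.

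First I would regularize \eqref{com-NS} in Brenner's spirit, adding $\varepsilon\Delta\rho$ to the continuity equation together with a compensating flux in the momentum balance (and, when $\gamma$ is near $1$, an artificial pressure $\delta\rho^\beta$), chosen so that the convex energy structure of \eqref{energy} survives. For each choice of the vanishing parameters, abbreviated by an index $n$, the parabolic smoothing of the density coupled with a Faedo--Galerkin scheme for the velocity yields an approximate solution $(\rho_n,\mathbf{u}_n)$ regular enough to justify the computation \eqref{energy}--\eqref{relative-energy-ineq}. Running that computation for the regularized system against a pair $(r,\mathbf{v})$ in the class \eqref{ds-regular1}--\eqref{ds-regular2}, and---crucially---keeping the $E_1,E_2$ contributions \emph{signed} instead of bounding them by their absolute values, produces
\begin{equation*}
  \mathrm{LS}(\rho_n,\mathbf{u}_n;r,\mathbf{v}) \le I_0 + \int_0^t\!\!\int e^{\int_s^t C_0\Lambda(\mathbf{v})}\Big[(r-\rho_n)\mathrm{P}''(r)E_1 + \tfrac{\rho_n}{r}E_2\cdot(\mathbf{v}-\mathbf{u}_n)\Big]\dd x\dd s + \mathcal{B}_n,
\end{equation*}
where $I_0$ is the fixed initial term and $\mathcal{B}_n$ gathers the Brenner corrections.

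The energy inequality furnishes bounds, uniform in $n$, for $\rho_n$ in $L^\infty(0,T;L^\gamma)$, for $\sqrt{\rho_n}\,\mathbf{u}_n$ in $L^\infty(0,T;L^2)$, and for $\mathbf{u}_n$ in $L^2(0,T;H^1)$; extracting weak-$*$ limits along a subsequence and using the continuity equation in an Aubin--Lions argument identifies $\rho_n\mathbf{u}_n\rightharpoonup\rho\mathbf{u}$ and gives $\rho\in C_w([0,T];L^\gamma)$. On the left of the displayed inequality, convexity of $\rho\mapsto\mathrm{P}(\rho)-\mathrm{P}'(r)(\rho-r)-\mathrm{P}(r)$, of $(\rho,\rho\mathbf{u})\mapsto\tfrac12\rho|\mathbf{u}-\mathbf{v}|^2$, and of the dissipation \eqref{S(u-v)} yields $\mathrm{LS}(\rho,\mathbf{u};r,\mathbf{v})\le\liminf\mathrm{LS}(\rho_n,\mathbf{u}_n;r,\mathbf{v})$ by weak lower semicontinuity. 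On the right, the exponential weight depends on $\mathbf{v}$ alone and is therefore fixed, while the bracket is affine in $(\rho,\rho\mathbf{u})$ with coefficients $\mathrm{P}''(r)E_1\in L^{\gamma/(\gamma-1)}$ and $\tfrac1rE_2\in L^{2\gamma/(\gamma-1)}$ by \eqref{ds-regular1}; hence both remainder integrals pass to the limit by weak convergence, $I_0$ converges by \eqref{periodic-initial} or \eqref{Dirichlet-initial}, and $\mathcal{B}_n\to0$. This gives the signed inequality for $(\rho,\mathbf{u})$, and estimating the two integrals by their absolute values returns exactly \eqref{relative-energy-ineq}. The dichotomy $\gamma\ge\tfrac65$ versus $\gamma>1$ enters only through whether \eqref{u-v-H1-torus} or \eqref{u-v-H1-bdd} is invoked, already recorded in $\Lambda(\mathbf{v})$.

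The delicate points are the design of the Brenner regularization---so that it simultaneously admits approximate solutions, preserves the convex energy identity, and yields corrections $\mathcal{B}_n$ that are both controllable and vanishing---and the momentum compactness $\rho_n\mathbf{u}_n\rightharpoonup\rho\mathbf{u}$, the one place where some genuine negative-norm compactness of the density is required and which fixes the lower threshold on $\gamma$. Everything else is governed by the single observation that, because $\Lambda$ depends on $\mathbf{v}$ alone, the awkward nonlinearities never have to be passed to the limit.
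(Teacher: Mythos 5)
Your proposal is correct and follows essentially the same route as the paper: a Brenner-type regularization of \eqref{com-NS}, the relative energy inequality \eqref{relative-energy-ineq-eps} for the approximate solutions with the $E_1,E_2$ terms kept signed, weak lower semicontinuity of the convex left-hand side, and passage to the limit in the affine remainder via the weak convergences of $\rho^\eps$ and $\rho^\eps{\bf u}^\eps$ (the paper identifies $\rho^\eps{\bf u}^\eps\rightharpoonup\rho{\bf u}$ by Lions' product-of-weak-limits lemma rather than Aubin--Lions, but this is the same compactness mechanism). The Brenner corrections you denote $\mathcal{B}_n$ are handled in the paper by the renormalized continuity equation, which yields $\eps^{1/2}(\rho^\eps)^{-1/2}\nabla\rho^\eps\in L^2(0,T;L^2)$ and hence $\mathscr{R}_3,\mathscr{R}_4\to 0$, exactly the delicate point you flagged.
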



\begin{rem}
Feireisl, Novotn\'{y} and Sun \cite{FNS-indiana-2011} introduced a class of suitable weak solutions to the compressible barotropic Navier-Stokes equations and proved that the solution satisfies the relative entropy inequality 
 for $\gamma > \frac{3}{2}$. Compared with \cite{FNS-indiana-2011}, we define the dissipative solution satisfying the inequality \eqref{relative-energy-ineq}. We give the process of the density arguments at Section \ref{Sec:regular}. In addition, the adiabatic exponent can reduce to $\gamma \geq \frac{6}{5}$ for the periodic case and $\gamma > 1$ for the Dirichlet boundary case.
\end{rem}

Applying Theorem \ref{thm-existence-dissipative-sol} directly, we have the following result on the relationship of the strong solution and dissipative solution.

\begin{cor}\label{weak-strong-ds}
Assume that $(r, {\bf v})$ is a strong solution of \eqref{com-NS} and the initial data $(r, {\bf v})|_{t = 0} = (r_0, {\bf v}_0)$ together with \eqref{boundary1} or \eqref{boundary2}, satisfying the regularities
\begin{align}\label{regular-r-v}
\left\{
 \begin{array}{ll}
   r \in C([0,T]; L^\gamma) \,, \;\; r_1 \leq r \leq r_2 \,, \;\; \partial_t r \in L^1 (0, T; L^{ \frac{\gamma}{\gamma - 1} }) \cap L^2 (0, T; L^{ \frac{2 \gamma}{\gamma + 1} } \cap L^{ \frac{6}{5} }) \,, \\
   {\bf v} \in C([0,T]; L^{ \frac{2\gamma}{\gamma - 1} }) \,,  \;\; \partial_t {\bf v} \in L^1 (0, T; L^{ \frac{2 \gamma}{\gamma - 1} }) \cap L^2 (0, T; L^{ \frac{2 \gamma}{\gamma + 1} } \cap L^{ \frac{6}{5} }) \,, \\
   \nabla r \in L^\infty (0,T; L^{ \frac{2\gamma}{\gamma - 1} }) \,, \;\; \dv \mathbb{S} (\nabla {\bf v}) \in L^2(0,T; L^{ \frac{6 \gamma}{5\gamma - 3} } ) \cap L^1 (0,T; L^{ \frac{2 \gamma}{\gamma - 1} }) \,, \\
   \nabla {\bf v} \in L^2 (0, T; L^\infty) \,,
 \end{array}\right.
\end{align}
for some positive constants $r_1$ and $r_2$, and any $T \in (0, T_{\rm max})$, where $T_{\rm max}$ is the maximal existence time. Let $(\rho, {\bf u})$ be a dissipative solution of the system \eqref{com-NS}-\eqref{initial} together with \eqref{boundary1} or \eqref{boundary2}, and the initial data satisfy
\begin{equation*}
  \begin{aligned}
    \int \frac{1}{2} \rho_0 \Big | \frac{{\bf m}_0}{\rho_0} - {\bf v}_0 \Big |^2 + {\rm P}(\rho_0) - {\rm P}'(r_0) (\rho_0 - r_0) - {\rm P}(r_0) \dd x = 0 \,.
  \end{aligned}
\end{equation*}
Then, the dissipative solution $(\rho, {\bf u})$ is equal to $(r, {\bf v})$ on a.e. $(t, x) \in [0, T] \times \Omega$.
\end{cor}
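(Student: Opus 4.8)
The plan is to test the relative entropy inequality \eqref{relative-energy-ineq} against the given strong solution $(r, {\bf v})$ itself. First I would check that $(r, {\bf v})$ is an admissible comparison pair in the sense of Definition \ref{def-dissipative-sol}: the regularity list \eqref{regular-r-v} is precisely what is required so that $(r,{\bf v})$ fulfills \eqref{ds-regular1} together with the additional hypotheses \eqref{ds-regular2} used in Theorem \ref{thm-existence-dissipative-sol}. Since \eqref{relative-energy-ineq} was derived only for smooth $(r,{\bf v})$, I would invoke the regularization procedure of Section \ref{Sec:regular} to guarantee that \eqref{relative-energy-ineq} continues to hold for a comparison pair with the lower regularity \eqref{regular-r-v}. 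Because $(\rho,{\bf u})$ is a dissipative solution, the inequality ${\rm LS}(\rho,{\bf u};r,{\bf v}) \leq {\rm RS}(\rho,{\bf u};r,{\bf v})$ is then available for this choice of $(r,{\bf v})$.

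Next I would exploit that $(r,{\bf v})$ is a strong solution of \eqref{com-NS}. By the very definitions in \eqref{E1-2}, the mass equation gives $E_1(r,{\bf v}) = \partial_t r + \dv(r {\bf v}) = 0$ and the momentum equation gives $E_2(r,{\bf v}) = r \partial_t {\bf v} + r {\bf v}\cdot\nabla {\bf v} + \nabla {\rm p}(r) - \dv \mathbb{S}(\nabla {\bf v}) = 0$ almost everywhere on $[0,T]\times\Omega$. Consequently the two integral terms on the right-hand side of \eqref{relative-energy-ineq} carrying $E_1(r,{\bf v})$ and $E_2(r,{\bf v})$ vanish identically. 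The remaining term on the right-hand side is the initial relative entropy multiplied by the Grönwall factor, and this is annihilated by the hypothesis $\int \frac12 \rho_0 \big| \frac{{\bf m}_0}{\rho_0} - {\bf v}_0 \big|^2 + {\rm P}(\rho_0) - {\rm P}'(r_0)(\rho_0 - r_0) - {\rm P}(r_0) \dd x = 0$. Hence ${\rm RS}(\rho,{\bf u};r,{\bf v}) = 0$, and therefore ${\rm LS}(\rho,{\bf u};r,{\bf v}) \leq 0$.

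The conclusion then follows from the nonnegativity of every term defining ${\rm LS}$. Indeed $\frac12 \rho |{\bf u}-{\bf v}|^2 \geq 0$ since $\rho \geq 0$; the relative pressure potential ${\rm P}(\rho) - {\rm P}'(r)(\rho-r) - {\rm P}(r) \geq 0$ by the convexity of ${\rm P}$ recorded in \eqref{convexity-P}; and by the identity \eqref{S(u-v)} the viscous term equals $\mu \|\nabla({\bf u}-{\bf v})\|_{L^2}^2 + (\mu+\lambda)\|\dv({\bf u}-{\bf v})\|_{L^2}^2$, which is nonnegative under $\mu > 0$ and $2\mu + 3\lambda \geq 0$. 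Since a sum of nonnegative quantities is $\leq 0$, each must vanish for almost every $t \in [0,T]$. In particular $\int {\rm P}(\rho) - {\rm P}'(r)(\rho-r) - {\rm P}(r) \dd x = 0$, and since the integrand is a nonnegative, strictly convex quantity that is strictly positive whenever $\rho \neq r$, this forces $\rho = r$ almost everywhere. Using $\rho = r \geq r_1 > 0$ in $\int \frac12 \rho |{\bf u}-{\bf v}|^2 \dd x = 0$ then yields ${\bf u} = {\bf v}$ almost everywhere, so $(\rho,{\bf u}) = (r,{\bf v})$ a.e.\ on $[0,T]\times\Omega$, as claimed.

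The only genuinely delicate step is the first one: the relative entropy inequality \eqref{relative-energy-ineq} was obtained by formal manipulations assuming smooth $(\rho,{\bf u})$ and $(r,{\bf v})$, so its validity for a comparison pair enjoying merely the regularity \eqref{regular-r-v} must be secured through the density and regularization arguments of Section \ref{Sec:regular}. I would rely on those arguments here rather than reprove them; once \eqref{relative-energy-ineq} is licensed for this $(r,{\bf v})$, the remainder is the elementary sign-and-convexity argument above.
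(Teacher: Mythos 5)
Your argument is correct and is exactly the ``direct conclusion'' the paper has in mind (the paper omits the proof of Corollary \ref{weak-strong-ds} for precisely this reason): test \eqref{relative-energy-ineq} with the strong solution, use $E_1(r,{\bf v})=E_2(r,{\bf v})=0$ and the vanishing initial relative entropy to get ${\rm LS}\le 0$, then conclude by nonnegativity and strict convexity. One minor remark: the step you flag as delicate is already discharged by Definition \ref{def-dissipative-sol} itself, which demands \eqref{relative-energy-ineq} for every comparison pair with the regularity \eqref{ds-regular1} (the regularization of Section \ref{Sec:regular} is what justifies that definition, not something you need to re-invoke here), and note that $2\mu+3\lambda\ge 0$ gives $\mu+\lambda\ge\mu/3>0$, so the viscous term in ${\rm LS}$ is indeed nonnegative as you claim.
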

\begin{rem}
The condition $\nabla {\bf v} \in L^2 (0, T; L^\infty)$ implies that $\mathbb{D} ( {\bf v} ) = \frac12 ( \nabla {\bf v} + \nabla^\top {\bf v} ) \in L^1(0,T; L^\infty)$, which is corresponding to the blow up criteria of the Navier-Stokes equations given by Huang et al. \cite{HLX-cmp-2011}. Under the condition $\mathbb{D} ( {\bf v} ) \in L^1(0,T; L^\infty)$, the regularities \eqref{regular-r-v} can be replaced by making restriction on the initial data and $\gamma$ in some situations. For example, endowing the initial data
\begin{equation}\label{initial-str}
  \begin{aligned}
    & 0 < \underline{r} \leq r_0 \leq \overline{r} \,, \quad r_0 \in W^{1,p} \, \; \textrm{ for } \, \; p > 6 \,, \quad {\bf v}_0 \in H^2 \,,
  \end{aligned}
\end{equation}
for some positive constants $\underline{r}$ and $\overline{r}$, and making use of the condition $\mathbb{D} ( {\bf v} ) \in L^1(0,T; L^\infty)$, a direct conclusion from \cite{HLX-cmp-2011} shows that there exists a global strong solution $(r, {\bf v})$ with
\begin{align}\label{regular-r-v-1}
\left\{
 \begin{array}{ll}
   r_1 \leq r \leq r_2 \,, \quad r \in C ([0,T); W^{1,6}) \,, \;\; \partial_t r \in C([0,T); L^6) \,, \\
   {\bf v} \in C ([0,T); H^2) \cap L^2 (0,T; W^{2,6}) \,, \;\; \partial_t {\bf v} \in L^\infty (0,T; L^2) \cap L^2 (0,T; H^1) \,,
 \end{array}\right.
\end{align}
for any $T \in (0, \infty)$. We see that $(r, {\bf v})$ with \eqref{regular-r-v-1} meets the requirement of \eqref{regular-r-v} for $\gamma \geq \frac{3}{2}$. It says that \eqref{regular-r-v} can be substituted by \eqref{initial-str} for $\gamma \geq \frac{3}{2}$. 
\end{rem}

Our next goal is to show the weak solution of the compressible isentropic Navier-Stokes equations is also a dissipative solution. We first recall the definition of weak solution as follows.

\begin{defn}\label{def-weak-sol}
A pair $(\rho, {\bf u})$ is a weak solution of the problem \eqref{com-NS}-\eqref{boundary1} or \eqref{com-NS}, \eqref{initial} and \eqref{boundary2} provided that, for any fixed $T > 0$,
\begin{equation}\label{weak-est}
  \begin{aligned}
    \rho \in L^\infty (0,T; L^\gamma) \,, \quad \sqrt{\rho} {\bf u} \in L^\infty (0,T; L^2) \,, \quad {\bf u} \in L^2 (0,T; H^1) \,,
  \end{aligned}
\end{equation}
and the continuity equation $\eqref{com-NS}_1$ and the momentum equation $\eqref{com-NS}_2$ are satisfied in $\mathscr{D}' ( [0,T] \times \Omega )$, that is, for $\Psi \in C_c^\infty ( [0,T] \times \Omega )$, $\Phi \in C_c^\infty ( [0,T] \times \Omega )$,
\begin{align}
    & \int \rho (T,x) \Psi (T,x) \dd x - \int \rho_0 \Psi (0,x) \dd x = \int_0^T \int \rho \partial_t \Psi + \rho {\bf u} \cdot \nabla \Psi \dd x \dd t \,, \label{weak-conti-D}\\
    & \int \rho (T,x) {\bf u} (T,x) \cdot \Phi (T,x) \dd x - \int {\bf m}_0 \cdot \Phi (0,x) \dd x = \int_0^T \int \rho {\bf u} \cdot \partial_t \Phi + \rho {\bf u} \otimes {\bf u} : \nabla \Phi \nonumber \\
    & \qquad\qquad\qquad\qquad\qquad\qquad\qquad\quad\quad\quad\quad + A \rho^\gamma \dv \Phi - \mathbb{S}( \nabla {\bf u} ) : \nabla \Phi \dd x \dd t \,, \label{weak-mome-D}
\end{align}
and the energy inequality holds
  \begin{equation}\label{ws-weak-energy-ineq}
  \begin{aligned}
    \int \frac12 \rho |{\bf u}|^2 + \frac{A}{\gamma-1} \rho^\gamma \dd x + \int_0^t \int \mathbb{S}( \nabla {\bf u} ) : \nabla {\bf u} \dd x \dd t \leq \int \frac12 \frac{| {\bf m}_0 |^2}{\rho_0} + \frac{A}{\gamma-1} \rho_0^\gamma \dd x \,,
  \end{aligned}
\end{equation}
for almost every $t \in [0,T]$.
\end{defn}

Then, we write our next theorem in the following.

\begin{thm}\label{thm-weak-dissipative}
 Suppose that the assumptions of Theorem \ref{thm-existence-dissipative-sol} hold. If $(\rho, {\bf u})$ is a weak solution of the problem \eqref{com-NS}-\eqref{boundary1} or \eqref{com-NS}, \eqref{initial} and \eqref{boundary2}, then, the weak solution is a dissipative solution in the sense of Definition \ref{def-dissipative-sol}.
\end{thm}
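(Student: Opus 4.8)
The plan is to show that any weak solution $(\rho,{\bf u})$ in the sense of Definition \ref{def-weak-sol} automatically satisfies the relative entropy inequality \eqref{relative-energy-ineq} for every test pair $(r,{\bf v})$ admissible in Definition \ref{def-dissipative-sol}. The strategy is to re-derive the formal computation that produced \eqref{relative-energy-ineq}, but now rigorously at the level of weak formulations, feeding $(r,{\bf v})$ into the weak continuity equation \eqref{weak-conti-D} and the weak momentum equation \eqref{weak-mome-D} as test functions rather than assuming smoothness of $(\rho,{\bf u})$. First I would rewrite the relative energy functional
\begin{equation*}
  \mathcal{E}(\rho,{\bf u}\,|\,r,{\bf v})(t) = \int \tfrac12 \rho |{\bf u}-{\bf v}|^2 + {\rm P}(\rho) - {\rm P}'(r)(\rho-r) - {\rm P}(r) \dd x
\end{equation*}
and expand it as the combination
\begin{equation*}
  \mathcal{E} = \int \tfrac12 \rho|{\bf u}|^2 + {\rm P}(\rho) \dd x - \int \rho {\bf u}\cdot{\bf v} \dd x + \int \tfrac12 \rho|{\bf v}|^2 \dd x - \int {\rm P}'(r)(\rho-r) + {\rm P}(r) \dd x,
\end{equation*}
so that the first bracket is controlled by the energy inequality \eqref{ws-weak-energy-ineq}, while the remaining terms are handled by using $\tfrac12|{\bf v}|^2 - {\rm P}'(r)$ as a test function in \eqref{weak-conti-D} and $-{\bf v}$ as a test function in \eqref{weak-mome-D}.

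The key technical step is the justification that the low-regularity pair $(r,{\bf v})$ satisfying \eqref{ds-regular1}--\eqref{ds-regular2} is an admissible test function in the weak formulations. This is precisely what Section \ref{Sec:regular} is for: by the regularization procedure established there, the smooth test functions used to derive \eqref{relative-energy-ineq} can be replaced by functions with the lower regularities \eqref{ds-regular1}, and Remark \ref{grad-v-regularity} guarantees that ${\bf v}\in L^1(0,T;W^{1,\infty})$, ${\bf v}\cdot\nabla{\bf v}\in L^1(0,T;L^{\frac{2\gamma}{\gamma-1}})$, and $\nabla{\bf v}\in L^2(0,T;L^6\cup L^{\frac{2\gamma}{\gamma-1}})$. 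I would therefore approximate $(r,{\bf v})$ by mollification, carry out the identities \eqref{energy}--\eqref{relative-energy-E1-2} for the mollified test pair against the weak solution, and pass to the limit. The integrability exponents listed in \eqref{ds-regular1} are exactly calibrated so that each product appearing in \eqref{relative-energy-E1-2} and in the error term estimates \eqref{rSuv}--\eqref{r-S-u-v-bdd} pairs a factor in some $L^p$ with a factor in its conjugate $L^{p'}$; this is what makes every integral in the limit finite and the passage legitimate.

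After the limit identity is in hand, the remaining analytic content is identical to the a priori derivation already carried out in Section \ref{Sec:def-result}: one bounds the viscous cross term via \eqref{rSuv}--\eqref{r-S-u-v-bdd} using the Korn/Poincar\'e inequalities \eqref{u-v-H1-torus}--\eqref{u-v-H1-bdd}, the Sobolev embeddings \eqref{u-v-L6-torus}--\eqref{u-v-L6-bdd}, the convexity bound \eqref{convexity-P}, and Young's inequality, absorbing the factor $\tfrac12\int\mathbb{S}(\nabla{\bf u}-\nabla{\bf v}):\nabla({\bf u}-{\bf v})$ into the dissipation on the left-hand side, to arrive at the differential inequality \eqref{relative-energy-dt}. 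Applying Gr\"onwall's inequality then yields \eqref{relative-energy-ineq} verbatim, completing the proof.

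The main obstacle I expect is not the Gr\"onwall step but the rigorous passage to the limit in the term $\int\rho(\partial_t{\bf v}+{\bf v}\cdot\nabla{\bf v})\cdot({\bf v}-{\bf u})\dd x$ and the pressure-type terms, where one must integrate by parts in time against a weak solution whose time derivative is only a distribution and whose density is merely $C_w([0,T];L^\gamma)$. The weak continuity of $t\mapsto\rho(t)$ in $L^\gamma$ and of $t\mapsto(\rho{\bf u})(t)$ must be exploited to make sense of the boundary terms at $t$ and $t=0$, and the products $\rho{\bf v}$ and $\rho{\bf u}\cdot{\bf v}$ must be shown to be integrable and continuous in time using the prescribed exponents for ${\bf v}$ against the $L^\gamma$ and $L^2$ bounds on $\rho$ and $\sqrt{\rho}{\bf u}$; handling these time-boundary and time-derivative pairings carefully, so that the formal manipulation \eqref{energy}--\eqref{pressure} survives the low regularity, is the crux of the argument.
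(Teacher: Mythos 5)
Your proposal follows essentially the same route as the paper's proof in Section \ref{Sec:ws-ds}: test \eqref{weak-conti-D} with $\tfrac12|{\bf v}|^2$ and ${\rm P}'(r)$, test \eqref{weak-mome-D} with ${\bf v}$, add the identity for ${\rm P}'(r)r-{\rm P}(r)$ and the energy inequality \eqref{ws-weak-energy-ineq}, reuse the estimate \eqref{r-S-u-v-torus}--\eqref{r-S-u-v-bdd}, apply Gr\"onwall, and recover the initial-time term via the weak continuity \eqref{ws-conti-t} along a sequence $s_n\to 0$. You also correctly identify the two delicate points (admissibility of the low-regularity pair $(r,{\bf v})$, handled by the regularization of Section \ref{Sec:regular}, and the time-boundary pairings).

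The one concrete ingredient you omit is the spatial cutoff. Definition \ref{def-weak-sol} requires test functions in $C_c^\infty([0,T]\times\Omega)$, and for the Dirichlet case \eqref{boundary2} the candidate tests $\tfrac12|{\bf v}|^2-{\rm P}'(r)$ cannot be compactly supported since ${\rm P}'(r)\geq {\rm P}'(r_1)>0$; mollification alone (Section \ref{Sec:regular}) does not fix this. The paper therefore inserts a cutoff $\phi_m$ with $|\nabla\phi_m|\lesssim m$ supported in a $\tfrac1m$-neighborhood of $\partial\Omega$, which generates the commutator terms $\mathscr{X}_1$--$\mathscr{X}_6$ in $\mathcal{R}_{\chi\phi_m}$; showing these vanish as $m\to\infty$ requires Hardy's inequality together with ${\bf v}|_{\partial\Omega}=0$ and ${\bf v}\in L^1(0,T;W^{1,\infty})$ from Remark \ref{grad-v-regularity}, plus the absolute continuity of $\sup_t\int\rho|{\bf u}|^2+{\rm p}(\rho)\,{\rm d}x$ over thin boundary layers. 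This is a genuine step of the argument (about a page of estimates \eqref{X1}--\eqref{X6}) that your plan would need to supply; once it is in place the rest goes through exactly as you describe.
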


The weak solution of the compressible isentropic Navier-Stokes equations also has the weak-strong uniqueness property, see \cite{FJN-jmfm-2012}. By means of Theorem \ref{thm-weak-dissipative}, it can directly give another version proof of the weak-strong uniqueness property for the weak solution of the compressible isentropic Navier-Stokes equations. We state the conclusion by a corollary as follows.

\begin{cor}\label{weak-strong-ws}
Under the same assumptions of Corollary \ref{weak-strong-ds}, if we assume that $(\rho, {\bf u})$ is a weak solution of the problem \eqref{com-NS}-\eqref{boundary1} or \eqref{com-NS}, \eqref{initial} and \eqref{boundary2}, then $( \rho, {\bf u} ) = ( r, {\bf v} )$ on a.e. $(t,x) \in [0,T] \times \Omega$.
\end{cor}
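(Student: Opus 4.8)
The plan is to show that every weak solution satisfies the relative energy inequality \eqref{relative-energy-ineq} for each admissible pair $(r,{\bf v})$, which is exactly the defining property in Definition \ref{def-dissipative-sol}. Since a weak solution need not be smooth, I cannot integrate by parts as in the a priori derivation of Section \ref{Sec:def-result}; instead I will reconstruct the relative energy balance \eqref{relative-energy-E1-2} by inserting $(r,{\bf v})$-dependent test functions into the weak formulations \eqref{weak-conti-D}--\eqref{weak-mome-D} and combining the result with the energy inequality \eqref{ws-weak-energy-ineq}. The structural observation that organizes the whole computation is the splitting
\begin{align*}
  & \int \tfrac12\rho|{\bf u}-{\bf v}|^2 + {\rm P}(\rho) - {\rm P}'(r)(\rho-r) - {\rm P}(r)\dd x \\
  & \quad = \int \Big( \tfrac12\rho|{\bf u}|^2 + {\rm P}(\rho)\Big)\dd x - \int \rho {\bf u}\cdot{\bf v}\dd x + \int \tfrac12\rho|{\bf v}|^2\dd x - \int \big({\rm P}'(r)(\rho-r)+{\rm P}(r)\big)\dd x,
\end{align*}
so that each piece can be controlled by a separate ingredient.

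First I would bound the total energy $\int \tfrac12\rho|{\bf u}|^2+{\rm P}(\rho)\dd x$ together with the dissipation $\int_0^t\int\mathbb{S}(\nabla{\bf u}):\nabla{\bf u}\dd x\dd s$ directly by \eqref{ws-weak-energy-ineq}. Next I would test the weak momentum equation \eqref{weak-mome-D} with $\Phi={\bf v}$ to produce $\int\rho{\bf u}\cdot{\bf v}\dd x$ and the source terms $\int_0^t\int \rho{\bf u}\cdot\partial_t{\bf v}+\rho{\bf u}\otimes{\bf u}:\nabla{\bf v}+A\rho^\gamma\dv{\bf v}-\mathbb{S}(\nabla{\bf u}):\nabla{\bf v}\dd x\dd s$; test the weak continuity equation \eqref{weak-conti-D} with $\Psi=\tfrac12|{\bf v}|^2$ to recover $\int\tfrac12\rho|{\bf v}|^2\dd x$; and test \eqref{weak-conti-D} with $\Psi={\rm P}'(r)$ to recover $\int\rho{\rm P}'(r)\dd x$. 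Adding these identities with the appropriate signs, together with ${\rm P}'(r)r-{\rm P}(r)={\rm p}(r)$ and the definitions \eqref{E1-2} of $E_1,E_2$, reproduces the balance \eqref{relative-energy-E1-2}, now read as an inequality inherited from \eqref{ws-weak-energy-ineq}; the boundary contributions at $t=0$ assemble into $\int\tfrac12\rho_0|\tfrac{{\bf m}_0}{\rho_0}-{\bf v}_0|^2+{\rm P}(\rho_0)-{\rm P}'(r_0)(\rho_0-r_0)-{\rm P}(r_0)\dd x$. From here the estimates \eqref{rSuv}--\eqref{relative-energy-dt} and Gr\"onwall's inequality are purely algebraic and carry over to yield \eqref{relative-energy-ineq}.

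The one genuine difficulty is that $(r,{\bf v})$ in the class \eqref{ds-regular1}--\eqref{ds-regular2} is not an admissible test function in \eqref{weak-conti-D}--\eqref{weak-mome-D}, which require $C_c^\infty$. This is precisely what the regularization procedure of Section \ref{Sec:regular} is for: I mollify $(r,{\bf v})$ in space and time to obtain smooth $(r_\delta,{\bf v}_\delta)$, with the lower bound $r_\delta\geq r_1$ preserved by convolution against a probability kernel, apply the smooth identities above, and then let $\delta\to0$. The regularity in \eqref{ds-regular1}--\eqref{ds-regular2} is exactly what makes every product finite and every limit admissible: $\nabla{\bf v}\in L^1(0,T;L^\infty)$ (via Remark \ref{grad-v-regularity}) controls $\int_0^t\int\rho|{\bf u}|^2|\nabla{\bf v}|$ and $\int_0^t\int\rho^\gamma|\dv{\bf v}|$ against $\|\sqrt\rho{\bf u}\|_{L^\infty L^2}^2$ and $\|\rho\|_{L^\infty L^\gamma}^\gamma$; $\partial_t{\bf v}\in L^1(0,T;L^{2\gamma/(\gamma-1)})$ pairs with $\rho{\bf u}\in L^\infty(0,T;L^{2\gamma/(\gamma+1)})$; $\nabla{\bf v}\in L^2(0,T;L^2)$ pairs with $\nabla{\bf u}\in L^2(0,T;L^2)$; and the $r$-terms are tamed by $r\in C([0,T];L^\gamma)$, $r\geq r_1$, and the conditions ${\rm P}''(r)E_1(r,{\bf v})\in L^1(L^{\gamma/(\gamma-1)})$, $\tfrac1r E_2(r,{\bf v})\in L^1(L^{2\gamma/(\gamma-1)})$.

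I expect the main obstacle to be the passage to the limit $\delta\to0$ in the two most singular groups of terms: the convective term $\int_0^t\int\rho{\bf u}\otimes{\bf u}:\nabla{\bf v}_\delta\dd x\dd s$, where I must combine $\nabla{\bf v}_\delta\to\nabla{\bf v}$ strongly in $L^1(0,T;L^\infty)$ with $\rho|{\bf u}|^2\in L^\infty(0,T;L^1)$, and the remainder terms carrying $E_1(r_\delta,{\bf v}_\delta)$ and $E_2(r_\delta,{\bf v}_\delta)$, whose convergence hinges on $\partial_t r_\delta,\partial_t{\bf v}_\delta$ and the second-order quantity $\dv\mathbb{S}(\nabla{\bf v}_\delta)$ converging in the exact norms dictated by \eqref{ds-regular1}--\eqref{ds-regular2}. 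Keeping $r\geq r_1$ throughout guarantees that ${\rm P}'(r)$, ${\rm P}''(r)$ and $1/r$ remain bounded continuous functions of $r$, so the nonlinear compositions pass to the limit as well. Once these convergences are secured, \eqref{relative-energy-ineq} survives the limit $\delta\to0$, which shows that the weak solution is a dissipative solution in the sense of Definition \ref{def-dissipative-sol} and completes the proof.
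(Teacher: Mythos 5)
There is a genuine gap: your proposal proves the wrong statement. Everything you write is aimed at showing that the weak solution $(\rho,{\bf u})$ satisfies the relative energy inequality \eqref{relative-energy-ineq}, i.e.\ that it is a dissipative solution in the sense of Definition \ref{def-dissipative-sol} --- which is precisely Theorem \ref{thm-weak-dissipative}, a result the paper has already established and which the corollary is meant to \emph{use}, not re-prove. The actual conclusion of Corollary \ref{weak-strong-ws} is the identification $(\rho,{\bf u})=(r,{\bf v})$ a.e., and that step never appears in your argument. To close the gap you must, after invoking (or re-deriving) \eqref{relative-energy-ineq}, insert the strong solution $(r,{\bf v})$ of Corollary \ref{weak-strong-ds} as the comparison pair: since $(r,{\bf v})$ solves \eqref{com-NS} one has $E_1(r,{\bf v})=E_2(r,{\bf v})=0$, and the hypothesis on the initial data makes the initial relative energy vanish, so ${\rm RS}(\rho,{\bf u};r,{\bf v})=0$ and hence ${\rm LS}(\rho,{\bf u};r,{\bf v})=0$ for all $t$. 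The coercivity \eqref{convexity-P} then forces $\rho=r$ a.e., and $\int\rho|{\bf u}-{\bf v}|^2\dd x=0$ together with $\rho=r\geq r_1>0$ forces ${\bf u}={\bf v}$ a.e. Without this paragraph the corollary is simply not proved.

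Two further remarks on the part you did write. First, the paper's own proof of this corollary is a one-line citation of Theorem \ref{thm-weak-dissipative} followed by the uniqueness argument above; re-deriving Theorem \ref{thm-weak-dissipative} from scratch is legitimate but redundant, and if you do so you should follow Section \ref{Sec:ws-ds} more faithfully: the admissible test functions in \eqref{weak-conti-D}--\eqref{weak-mome-D} are compactly supported in space and time, so beyond mollifying $(r,{\bf v})$ you also need the cutoffs $\chi\in C_c^\infty((0,T))$ and $\phi_m\in C_c^\infty(\Omega)$, and the boundary remainder $\mathcal{R}_{\chi\phi_m}$ must be killed via Hardy's inequality (terms $\mathscr{X}_1$--$\mathscr{X}_6$), while the initial relative energy is recovered by sending $\tilde s=s_n\to0$ using the weak time-continuity \eqref{ws-conti-t}. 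Testing with $\Phi={\bf v}$ directly, as you propose, is not permitted by Definition \ref{def-weak-sol}. Second, the ``inequality inherited from \eqref{ws-weak-energy-ineq}'' must be combined with the tested identities at the level of the time-integrated (distributional) formulation, not as a pointwise-in-time differential inequality, since $\frac{\dd}{\dd t}\int\frac12\rho|{\bf u}|^2+{\rm P}(\rho)\dd x$ is not available for a weak solution; the paper handles this by working with $\partial_t\chi$ throughout and only at the end removing $\chi$.
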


\smallskip

\section{ regularization of $r$ and ${\bf v}$ }\label{Sec:regular}
This section devotes to show that the smooth function $r$ and $\bf v$ can be replaced by  the functions with regularities given in \eqref{ds-regular1} and \eqref{ds-regular2}. Thus, the following is main result in this section.

\begin{prop}

If \eqref{relative-energy-ineq} holds for smooth functions $r$ and ${\bf v}$, then it also holds for the functions  satisfying  \eqref{ds-regular1} and \eqref{ds-regular2}.

\end{prop}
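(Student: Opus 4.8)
The plan is to prove the proposition by a density/regularization argument. Since \eqref{relative-energy-ineq} is known to hold for smooth test pairs $(r,{\bf v})$, I would approximate a general pair satisfying \eqref{ds-regular1}--\eqref{ds-regular2} by smooth pairs $(r_\eps, {\bf v}_\eps)$ (for instance via mollification in space and time, using the positive lower bound $r\geq r_1$ to keep $r_\eps\geq r_1/2 > 0$ for $\eps$ small), apply the already-established inequality to each $(r_\eps, {\bf v}_\eps)$, and then pass to the limit $\eps \to 0$ in both sides ${\rm LS}(\rho,{\bf u}; r_\eps,{\bf v}_\eps)$ and ${\rm RS}(\rho,{\bf u}; r_\eps,{\bf v}_\eps)$. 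The whole point of the regularity list \eqref{ds-regular1}--\eqref{ds-regular2} is to make every single term appearing in ${\rm LS}$ and ${\rm RS}$ a product of factors each lying in a concrete Lebesgue space, so that the convergences can be verified term by term.

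First I would treat the left-hand side ${\rm LS}$. The kinetic term $\int \frac12\rho|{\bf u}-{\bf v}_\eps|^2$ and the pressure/relative-energy term $\int {\rm P}(\rho)-{\rm P}'(r_\eps)(\rho-r_\eps)-{\rm P}(r_\eps)$ require convergence of ${\bf v}_\eps \to {\bf v}$ in $C([0,T];L^{2\gamma/(\gamma-1)})$ and $r_\eps\to r$ in $C([0,T];L^\gamma)$, which mollification provides; Hölder pairing against $\sqrt\rho{\bf u}\in L^\infty L^2$, $\rho\in L^\infty L^\gamma$ gives the passage to the limit, with the convexity bound \eqref{convexity-P} controlling the relative pressure. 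The viscous term $\frac12\int_0^t\int \mathbb{S}(\nabla{\bf u}-\nabla{\bf v}_\eps):\nabla({\bf u}-{\bf v}_\eps)$ needs $\nabla{\bf v}_\eps\to\nabla{\bf v}$ in $L^2(0,T;L^2)$, which again follows from \eqref{ds-regular1}; by lower semicontinuity or direct expansion this term converges (or at worst passes in the correct inequality direction).

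Next I would handle the right-hand side ${\rm RS}$ term by term. The exponential Grönwall factor is controlled because $\Lambda({\bf v}_\eps)\to\Lambda({\bf v})$ in $L^1(0,T)$, using that $\|\mathbb{D}({\bf v}_\eps)\|_{L^\infty}$, $\|\dv\mathbb{S}(\nabla{\bf v}_\eps)\|_{L^{6\gamma/(5\gamma-3)}}^2$ and $\|\dv\mathbb{S}(\nabla{\bf v}_\eps)\|_{L^{2\gamma/(\gamma-1)}}$ converge in the norms dictated by \eqref{ds-regular1}; here the mild complication is that $\mathbb{D}({\bf v})\in L^1 L^\infty$ is an $L^\infty$-in-space condition, for which mollification still gives $L^1(0,T)$ convergence of the relevant norm. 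The forcing terms $\int (r_\eps-\rho){\rm P}''(r_\eps)E_1(r_\eps,{\bf v}_\eps)$ and $\int \frac{\rho}{r_\eps}E_2(r_\eps,{\bf v}_\eps)\cdot({\bf v}_\eps-{\bf u})$ are paired so that ${\rm P}''(r_\eps)E_1(r_\eps,{\bf v}_\eps)\to {\rm P}''(r)E_1(r,{\bf v})$ in $L^1(0,T;L^{\gamma/(\gamma-1)})$ against $(\rho-r)\in L^\infty L^\gamma$, and $\frac1{r_\eps}E_2(r_\eps,{\bf v}_\eps)\to\frac1r E_2(r,{\bf v})$ in $L^1(0,T;L^{2\gamma/(\gamma-1)})$ against $\rho({\bf v}-{\bf u})$ estimated through $\sqrt\rho{\bf u}, \sqrt\rho{\bf v}\in L^\infty L^2$ and $\rho\in L^\infty L^\gamma$. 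The two remarks following Definition \ref{def-dissipative-sol} already verify that $E_1,E_2$ make sense in exactly these spaces and decompose $E_1={\rm P}''(r)^{-1}(\partial_t{\rm P}'(r)+\cdots)$, $\frac1r E_2=\partial_t{\bf v}+{\bf v}\cdot\nabla{\bf v}+\nabla{\rm P}'(r)-\frac1r\dv\mathbb{S}(\nabla{\bf v})$; convergence of each summand uses $\partial_t r,\partial_t{\bf v}\in L^1 L^{\gamma/(\gamma-1)}$ resp.\ $L^1 L^{2\gamma/(\gamma-1)}$, $\nabla{\bf v}\in L^2 L^\infty$, and the elliptic-regularity consequence $\nabla{\bf v}\in L^2(L^6\cup L^{2\gamma/(\gamma-1)})$ from Remark \ref{grad-v-regularity}.

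The main obstacle I anticipate is the \emph{nonlinearity} in $r$: the functions ${\rm P}'(r)={\frac{A\gamma}{\gamma-1}}r^{\gamma-1}$, ${\rm P}''(r)$, $1/r$, $\nabla{\rm P}'(r)={\rm P}''(r)\nabla r$ and $\partial_t{\rm P}'(r)={\rm P}''(r)\partial_t r$ involve composition with nonlinear functions of $r$, and mollification does not commute with these compositions. The clean way to control this is to exploit $r_1\leq r$ (and $r\leq r_2$ when $\gamma>2$), so that on the range of $r$ the maps $s\mapsto s^{\gamma-1}, s^{\gamma-2}, s^{-1}$ are smooth with bounded derivatives (Lipschitz), giving ${\rm P}'(r_\eps)\to{\rm P}'(r)$, $1/r_\eps\to 1/r$, etc.\ in the appropriate spaces from $r_\eps\to r$; the derivative identities $\nabla{\rm P}'(r_\eps)$ and $\partial_t{\rm P}'(r_\eps)$ are then handled using $\nabla r\in L^\infty L^{2\gamma/(\gamma-1)}$ and $\partial_t r\in L^1 L^{\gamma/(\gamma-1)}$ together with uniform bounds on ${\rm P}''$. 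A secondary technical point is justifying convergence of the worst-integrability terms (the $L^{6/5}$ and $L^{2\gamma/(\gamma+1)}$ memberships of $\partial_t r,\partial_t{\bf v}$ in \eqref{ds-regular2}), which are precisely what is needed so that products like $\frac\rho r E_2\cdot({\bf v}-{\bf u})$ are integrable after pairing $L^{2\gamma/(\gamma+1)}$ against the $L^6$ bound on ${\bf v}-{\bf u}$; once the integrability is confirmed, dominated convergence closes the argument.
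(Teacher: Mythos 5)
Your overall strategy coincides with the paper's: extend and mollify $(r,{\bf v})$ in space and time, apply \eqref{relative-energy-ineq} to the smooth pair, pass to the limit term by term in ${\rm LS}$ and ${\rm RS}$, and control the nonlinear compositions ${\rm P}'(r_\eps),{\rm P}''(r_\eps),1/r_\eps$ via the bounds $r_1\le r$ (and $r\le r_2$ for $\gamma>2$) together with a Lipschitz/mean-value argument. That is exactly Section 3 of the paper.

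There is, however, one step where your sketch as written would not close, and it is precisely the step the paper spends most of its effort on. In the term ${\rm P}''(r_\eps)E_1(r_\eps,{\bf v}_\eps)$ you must handle the product ${\rm p}'(r_\eps)\dv{\bf v}_\eps$ and show it converges to ${\rm p}'(r)\dv{\bf v}$ in $L^1(0,T;L^{\gamma/(\gamma-1)})$. The naive splitting
\begin{equation*}
{\rm p}'(r_\eps)\dv{\bf v}_\eps-{\rm p}'(r)\dv{\bf v}
=\big({\rm p}'(r_\eps)-{\rm p}'(r)\big)\dv{\bf v}_\eps+{\rm p}'(r)\big(\dv{\bf v}_\eps-\dv{\bf v}\big)
\end{equation*}
handles the first piece (using $\|{\rm p}'(r_\eps)-{\rm p}'(r)\|_{L^\infty L^{\gamma/(\gamma-1)}}\to0$ against $\dv{\bf v}_\eps$ bounded in $L^1L^\infty$), but the second piece fails for $1<\gamma<2$: there ${\rm p}'(r)=A\gamma r^{\gamma-1}$ is only in $L^\infty(0,T;L^{\gamma/(\gamma-1)})$ (no upper bound on $r$ is assumed in that range), so H\"older against $\dv{\bf v}_\eps-\dv{\bf v}\to0$ in $L^2L^2$ lands in $L^1L^{2\gamma/(3\gamma-2)}\subsetneq L^1L^{\gamma/(\gamma-1)}$, and mollification does \emph{not} converge in the $L^\infty$ spatial norm that would be needed to pair the other way. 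The paper resolves this with a Friedrichs-type commutator decomposition ($I_1+I_2+I_3$ in Section 3), where the genuine commutator $I_3$ is bounded by $C\delta\int_0^T\int(|\partial_t{\rm p}'(r)|+|\nabla{\rm p}'(r)|)\,|\dv{\bf v}|\dd x\dd t$; this is exactly where the hypotheses $\partial_t r\in L^2(0,T;L^{2\gamma/(\gamma+1)}\cap L^{6/5})$ and $\nabla r\in L^\infty(0,T;L^{2\gamma/(\gamma-1)})$ of \eqref{ds-regular2} are consumed, paired with $\nabla{\bf v}\in L^2(0,T;L^6\cup L^{2\gamma/(\gamma-1)})$ from Remark \ref{grad-v-regularity} --- not, as you suggest, for the integrability of $\frac{\rho}{r}E_2\cdot({\bf v}-{\bf u})$. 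The analogous commutator also appears in $\frac{1}{r_\eps}E_2(r_\eps,{\bf v}_\eps)$ (the terms ${\bf v}_\eps\cdot\nabla{\bf v}_\eps-({\bf v}\cdot\nabla{\bf v})_\eps$ and ${\rm P}''(r_\eps)\nabla r_\eps-({\rm P}''(r)\nabla r)_\eps$). Your plan should be amended to include this commutator lemma explicitly; everything else you propose matches the paper.
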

\begin{proof}

We first extend $r$ and ${\bf v}$ on $[ - \delta_0, T + \delta_0 ] \times \widetilde{\Omega}$ with small $\delta_0 >0$ and $\Omega \subset\subset \widetilde{\Omega}$, which still satisfy the regularities \eqref{ds-regular1} and \eqref{ds-regular2}. Let
\begin{align}
    & \eta (x) \in C_c^\infty ( \mathbb{R}^3 ) \,, \;\; {\rm supp} \ \eta = \{ x \in \mathbb{R}^3 : |x| \leq 1 \} \,, \;\; \int_{\mathbb{R}^3} \eta (x) \dd x =1 \,, \;\; \eta^\delta ( x) = \frac{1}{\delta^3} \eta( \frac{x}{\delta} ) \,, \label{mollifier1} \\
    & \tilde{\eta} (t) \in C_c^\infty ( \mathbb{R} ) \,, \;\; {\rm supp} \ \tilde{\eta} = \{ t \in \mathbb{R} : |t| \leq 1 \} \,, \;\; \int_{\mathbb{R}} \tilde{\eta} (t) \dd t =1 \,, \;\; \tilde{\eta}^\delta ( t ) = \frac{1}{\delta} \tilde{\eta}( \frac{t}{\delta} ) \,, \label{mollifier2}
\end{align}
where $0 < \delta \leq 1$. We mollify $r$ and ${\bf v}$ with respect to space and time in the following way:
\begin{equation*}
  \begin{aligned}
    & r_{t,x}^\delta (t, x) = ( r \ast \eta^\delta ) \ast \tilde{\eta}^\delta (t, x) = \int_{ \mathbb{R} } \int_{ \mathbb{R}^n } r (t - s, x - y) \eta^\delta (y) \dd y \tilde{\eta}^\delta (s) \dd s \,, \\
    & {\bf v}_{t,x}^\delta (t, x) = ( {\bf v} \ast \eta^\delta ) \ast \tilde{\eta}^\delta (t, x) = \int_{ \mathbb{R} } \int_{ \mathbb{R}^n } {\bf v} (t - s, x - y) \eta^\delta (y) \dd y \tilde{\eta}^\delta (s) \dd s \,.
  \end{aligned}
\end{equation*}

Since
\begin{align*}
   & \nabla r \in L^1 (- \delta_0, T + \delta_0; L^{ \frac{2 \gamma}{\gamma - 1} } ( \widetilde{\Omega} )) \,, \quad \partial_t r \in L^1 (- \delta_0, T + \delta_0; L^{ \frac{\gamma}{\gamma - 1} } ( \widetilde{\Omega} )) \,, \\
   & \nabla {\bf v} \in L^2 (- \delta_0, T + \delta_0; L^2 ( \widetilde{\Omega} )) \,, \quad \dv {\bf v} \in L^2 (- \delta_0, T + \delta_0; L^2 ( \widetilde{\Omega} )) \,,
\end{align*}
by Lemmas \ref{mollifier-derivative} and \ref{mollifier-Lp}, then, as $\delta \rightarrow 0$,
\begin{align}
    & \| \nabla r_{t,x}^\delta - \nabla r \|_{L^1 (0, T; L^{\frac{2 \gamma}{\gamma - 1}} )} = \| (\nabla r )_{t,x}^\delta - \nabla r \|_{L^1 (0, T; L^{\frac{2 \gamma}{\gamma - 1}} )} \rightarrow 0 \,, \label{grad-r-delta} \\
    & \| \partial_t r_{t,x}^\delta - \partial_t r \|_{L^1 (0, T; L^{\frac{\gamma}{\gamma - 1}} )} = \| (\partial_t r )_{t,x}^\delta - \partial_t r \|_{L^1 (0, T; L^{\frac{\gamma}{\gamma - 1}} )} \rightarrow 0 \,, \label{dt-r-delta} \\
    & \| \nabla {\bf v}_{t,x}^\delta - \nabla {\bf v} \|_{L^2(0, T; L^2)} = \| ( \nabla {\bf v} )_{t,x}^\delta - \nabla {\bf v} \|_{L^2(0, T; L^2)} \rightarrow 0 \,, \label{grad-v-delta} \\
    & \| \dv {\bf v}_{t,x}^\delta - \dv {\bf v} \|_{L^2(0, T; L^2)} = \| ( \dv {\bf v} )_{t,x}^\delta - \dv {\bf v} \|_{L^2(0, T; L^2)} \rightarrow 0 \,. \label{div-v-delta}
\end{align}

For ${\bf v} \in C ([- \delta_0, T + \delta_0]; L^{\frac{2 \gamma}{\gamma - 1}} ( \widetilde{\Omega} ))$, $\partial_t {\bf v} \in L^1 (- \delta_0, T + \delta_0; L^{ \frac{2 \gamma}{\gamma - 1} } ( \widetilde{\Omega} ))$, by Lemma \ref{mollifier-L-infty}, then, as $\delta \rightarrow 0$,
\begin{align}\label{v-delta}
    & \| {\bf v}_{t,x}^\delta - {\bf v} \|_{L^\infty (0, T; L^{\frac{2 \gamma}{\gamma - 1}})} \rightarrow 0 \,.
\end{align}

For $\partial_t r \in L^1 (- \delta_0, T + \delta_0; L^{ \frac{\gamma}{\gamma - 1} } ( \widetilde{\Omega} ))$ and $\nabla r \in L^\infty (- \delta_0, T + \delta_0; L^{\frac{2 \gamma}{\gamma - 1}} ( \widetilde{\Omega} ))$, by Lemma \ref{mollifier-L-infty}, we know that, as $\delta \rightarrow 0$,
\begin{align}\label{r-delta-infty}
    \| r_{t,x}^\delta - r \|_{ L^\infty (0, T; L^\infty) } \rightarrow 0 \,.
\end{align}
In addition, since $r \geq r_1$ if $1 < \gamma \leq 2$ and $r_1 \leq r \leq r_2$ if $\gamma > 2$, and noticing that $\int_{ \mathbb{R}^3 } \eta^\delta (y) \dd y = 1$ and $\int_{ \mathbb{R} } \tilde{ \eta }^\delta (s) \dd s = 1$, then we have
\begin{equation}\label{r-delta-bdd}
  \begin{aligned}
    r_{t,x}^\delta \geq r_1 \; \textrm{ if } \; 1 < \gamma \leq 2 \,, \quad r_1 \leq r_{t,x}^\delta (t, x) \leq r_2 \; \textrm{ if } \; \gamma > 2 \,.
  \end{aligned}
\end{equation}

By Lagrange mean value theorem, there exists a $\theta_1 \in (0, 1)$ such that
\begin{equation*}
  \begin{aligned}
    {\rm P}'' ( r_{t,x}^\delta ) - {\rm P}'' (r) = & {\rm P}''' ( \theta_1 r_{t,x}^\delta + ( 1 - \theta_1 ) r ) ( r_{t,x}^\delta - r ) \\
    = & A \gamma ( \gamma - 2 ) ( \gamma - 3 ) [ \theta_1 r_{t,x}^\delta + ( 1 - \theta_1 ) r ]^{ \gamma - 3 } ( r_{t,x}^\delta - r ) \,.
  \end{aligned}
\end{equation*}
Combining \eqref{r-delta-infty} and \eqref{r-delta-bdd}, it follows from the above equality that, as $\delta \rightarrow 0$,
\begin{equation}\label{P''(r)}
  \begin{aligned}
    \| {\rm P}'' ( r_{t,x}^\delta ) - {\rm P}'' (r) \|_{ L^\infty (0, T; L^\infty) }  \rightarrow 0 \,.
  \end{aligned}
\end{equation}
By the same argument, we have
\begin{equation}\label{p'(r)}
  \begin{aligned}
    \| {\rm p}' ( r_{t,x}^\delta ) - {\rm p}' (r) \|_{ L^\infty (0, T; L^{ \frac{\gamma}{\gamma - 1} }) }  \rightarrow 0 \,.
  \end{aligned}
\end{equation}

Since
\begin{equation*}
  \begin{aligned}
    \nabla {\rm P}' ( r_{t,x}^\delta ) - \nabla {\rm P}' (r) = {\rm P}''( r_{t,x}^\delta ) ( \nabla r_{t,x}^\delta - \nabla r ) +  [ {\rm P}''( r_{t,x}^\delta ) - {\rm P}''(r) ] \nabla r \,,
  \end{aligned}
\end{equation*}
along with \eqref{grad-r-delta}, \eqref{r-delta-bdd} and \eqref{P''(r)}, then it holds, as $\delta \rightarrow 0$,
\begin{equation}\label{grad-P'(r)}
  \begin{aligned}
    \| \nabla {\rm P}' ( r_{t,x}^\delta ) - \nabla {\rm P}' (r) \|_{ L^1 (0, T; L^{ \frac{2\gamma}{\gamma-1} } ) } \rightarrow 0 \,.
  \end{aligned}
\end{equation}

Next, we will show that ${\rm P}'' (r) E_1 (r, {\bf v})$ can be approximated by ${\rm P}'' ( r_{t,x}^\delta ) E_1 ( r_{t,x}^\delta, {\bf v}_{t,x}^\delta )$. Recall the definition $E_1 (r, {\bf v})$ in \eqref{E1-2} and the relation $r {\rm P}'' (r) = {\rm p}' (r)$, then,
\begin{align}\label{P''-E1}
    {\rm P}'' ( r_{t,x}^\delta ) E_1 (r_{t,x}^\delta, {\bf v}_{t,x}^\delta) = & \partial_t {\rm P}' ( r_{t,x}^\delta ) + {\bf v}_{t,x}^\delta \cdot \nabla {\rm P}'' ( r_{t,x}^\delta ) + {\rm p}' (r_{t,x}^\delta ) \dv {\bf v}_{t,x}^\delta \nonumber\\
    = & ( {\rm P}'' (r) E_1 (r, {\bf v}) )_{t,x}^\delta + \partial_t {\rm P}' ( r_{t,x}^\delta ) - ( \partial_t {\rm P}' (r) )_{t,x}^\delta + {\bf v}_{t,x}^\delta \cdot \nabla {\rm P}' ( r_{t,x}^\delta ) \nonumber\\
    & - ( {\bf v} \cdot \nabla {\rm P}' (r) )_{t,x}^\delta + {\rm p}' ( r_{t,x}^\delta ) \dv {\bf v}_{t,x}^\delta - ( {\rm p}' (r) \dv {\bf v} )_{t,x}^\delta \,.
\end{align}

By Lemma \ref{mollifier-Lp}, we have, as $\delta \rightarrow 0$,
\begin{equation}\label{P''E1-delta}
  \begin{aligned}
    ( {\rm P}'' (r) E_1 (r, {\bf v}) )_{t,x}^\delta \rightarrow {\rm P}'' (r) E_1 (r, {\bf v}) \ \ \textrm{ strongly in } \; L^1 (0, T; L^{ \frac{\gamma}{\gamma-1} } ) \,.
  \end{aligned}
\end{equation}

For the term $\partial_t {\rm P}' ( r_{t,x}^\delta ) - ( \partial_t {\rm P}' (r) )_{t,x}^\delta$, it can be rewritten as
\begin{align}\label{dt-P'(r)-minus}
    \partial_t {\rm P}' ( r_{t,x}^\delta ) - ( \partial_t {\rm P}' (r) )_{t,x}^\delta = & {\rm P}'' (r) ( \partial_t r_{t,x}^\delta - \partial_t r ) + [ {\rm P}'' ( r_{t,x}^\delta ) - {\rm P}'' (r) ] \partial_t r_{t,x}^\delta \nonumber\\
    & + \partial_t {\rm P}' (r) - ( \partial_t {\rm P}' (r) )_{t,x}^\delta \,.
\end{align}
By \eqref{dt-r-delta}, \eqref{P''(r)} and Lemma \ref{mollifier-Lp},we have, as $\delta \rightarrow 0$,
\begin{equation}\label{dt-P'(r)-delta}
  \begin{aligned}
    \partial_t {\rm P}' ( r_{t,x}^\delta ) - ( \partial_t {\rm P}' (r) )_{t,x}^\delta \rightarrow 0 \ \ \textrm{ in } \; L^1 (0, T; L^{ \frac{\gamma}{\gamma-1} } ) \,.
  \end{aligned}
\end{equation}
Similarly, we have
\begin{equation}\label{v-grad-P'-delta}
  \begin{aligned}
    {\bf v}_{t,x}^\delta \cdot \nabla {\rm P}' ( r_{t,x}^\delta ) - ( {\bf v} \cdot \nabla {\rm P}' (r) )_{t,x}^\delta \rightarrow 0 \ \ \textrm{ in } \; L^1 (0, T; L^{ \frac{\gamma}{\gamma-1} } ) \,.
  \end{aligned}
\end{equation}

Now, we turn to deal with the term ${\rm p}' ( r_{t,x}^\delta ) \dv {\bf v}_{t,x}^\delta - ( {\rm p}' (r) \dv {\bf v} )_{t,x}^\delta$, which can be rewritten as
\begin{align*}
    & {\rm p}' ( r_{t,x}^\delta ) \dv {\bf v}_{t,x}^\delta - ( {\rm p}' (r) \dv {\bf v} )_{t,x}^\delta \\
    = & \Big ( {\rm p}' ( r_{t,x}^\delta ) - [ {\rm p}'(r) ]_{t,x}^\delta \Big ) \dv {\bf v}_{t,x}^\delta + [ {\rm p}'(r) ]_{t,x}^\delta \dv {\bf v}_{t,x}^\delta - ( {\rm p}' (r) \dv {\bf v} )_{t,x}^\delta \\
    = & \Big ( {\rm p}' ( r_{t,x}^\delta ) - [ {\rm p}'(r) ]_{t,x}^\delta \Big ) \dv {\bf v}_{t,x}^\delta + \Big ( [ {\rm p}'(r) ]_{t,x}^\delta - {\rm p}'(r) \Big ) ( \dv {\bf v}_{t,x}^\delta - \dv {\bf v} ) \\
    & - \int_{ \mathbb{R} } \int_{ \mathbb{R}^n } \Big ( [ {\rm p}' (r) ] (t, x) - [ {\rm p}'(r) ] (t-s, x-y) \Big ) \dv ( {\bf v}(t, x) \\
    & \quad \quad \quad\quad - {\bf v} (t-s, x-y) ) \eta^\delta (y) \dd y \tilde{ \eta }^\delta (s) \dd s \\
    = & \Big ( {\rm p}' ( r_{t,x}^\delta ) - {\rm p}'(r) \Big ) \dv {\bf v}_{t,x}^\delta - \Big ( [ {\rm p}'(r) ]_{t,x}^\delta - {\rm p}'(r) \Big ) \dv {\bf v} \\
    & - \int_{ \mathbb{R} } \int_{ \mathbb{R}^n } \Big ( [ {\rm p}' (r) ] (t, x) - [ {\rm p}'(r) ] (t - \delta s, x - \delta y) \Big ) \dv ( {\bf v}(t, x) \\
    & \quad \quad \quad\quad - {\bf v} (t - \delta s, x - \delta y) ) \eta (y) \dd y \tilde{ \eta } (s) \dd s \\
    = : & I_1 + I_2 + I_3 \,.
\end{align*}
For the term $I_1$, by \eqref{p'(r)}, we get, as $\delta \rightarrow 0$,
\begin{equation*}
  \begin{aligned}
    \| I_1 \|_{ L^1 (0, T; L^{ \frac{\gamma}{\gamma-1} } ) } \leq C \| \dv {\bf v} \|_{ L^1 (0, T; L^\infty) } \| {\rm p}'(r_{t,x}^\delta) - {\rm p}'(r) \|_{ L^\infty (0, T; L^{ \frac{\gamma}{\gamma - 1} }) } \rightarrow 0 \,.
  \end{aligned}
\end{equation*}
For the term $I_2$, thanks to ${\rm p}'(r)= A \gamma r^{\gamma - 1} \in L^\infty (- \delta_0, T + \delta_0; L^{ \frac{\gamma}{\gamma - 1} } (\widetilde{\Omega}) )$ and $\partial_t {\rm p}'(r) = A \gamma ( \gamma - 1 ) r^{\gamma - 2} \partial_t r \in L^1 (- \delta_0, T + \delta_0; L^{ \frac{\gamma}{\gamma - 1} } (\widetilde{\Omega}) )$, by Lemma \ref{mollifier-L-infty}, we get, as $\delta \rightarrow 0$,
\begin{equation*}
  \begin{aligned}
    \| I_2 \|_{ L^1 (0, T; L^{ \frac{\gamma}{\gamma-1} } ) } \leq C \| \dv {\bf v} \|_{ L^1 (0, T; L^\infty) } \| [ {\rm p}'(r_{t,x}) ]^\delta - {\rm p}'(r) \|_{ L^\infty (0, T; L^{ \frac{\gamma}{\gamma - 1} }) } \rightarrow 0 \,.
  \end{aligned}
\end{equation*}
For the term $I_3$, taking a similar way to \eqref{w-delta} and \eqref{I} at Lemma \ref{mollifier-L-infty} in Appendix \ref{Sec:appendix}, we have
\begin{align*}
    I_3= & - \int_{ \mathbb{R} } \int_{ \mathbb{R}^n } \Big ( [ {\rm p}' (r) ] (t, x) - [ {\rm p}'(r) ] (t - \delta s, x) \Big ) \dv ( {\bf v}(t, x) \\
    & \quad \quad \quad\quad - {\bf v} (t - \delta s, x - \delta y) ) \eta (y) \dd y \tilde{ \eta } (s) \dd s \\
    & - \int_{ \mathbb{R} } \int_{ \mathbb{R}^n } \Big ( [ {\rm p}' (r) ] (t - \delta s, x) - [ {\rm p}'(r) ] (t - \delta s, x - \delta y) \Big ) \dv ( {\bf v}(t, x) \\
    & \quad \quad \quad\quad - {\bf v} (t - \delta s, x - \delta y) ) \eta (y) \dd y \tilde{ \eta } (s) \dd s \\
    \leq & C \delta \int_0^T \int | \partial_t {\rm p}' (r) | | \dv {\bf v} | \dd x \dd t + C \delta \int_0^T \int | \nabla {\rm p}' (r) | | \dv {\bf v} | \dd x \dd t \,.
\end{align*}
Thanks to $r \geq r_1$ if $1 < \gamma \leq 2$, $r_1 \leq r \leq r_2$ if $\gamma > 2$, $\partial_t r \in L^2 (- \delta_0, T + \delta_0; L^{ \frac{2 \gamma}{\gamma + 1} } ( \widetilde{\Omega} ) \cap L^{ \frac{6}{5} } ( \widetilde{\Omega} ))$, $\nabla r \in L^2 (- \delta_0, T + \delta_0; L^{ \frac{2 \gamma}{\gamma + 1} } ( \widetilde{\Omega} ) \cap L^{ \frac{6}{5} } ( \widetilde{\Omega} ))$ and $\nabla {\bf v} \in L^2 (- \delta_0, T + \delta_0; L^{ \frac{2 \gamma}{\gamma - 1} } ( \widetilde{\Omega} ) \cup L^6 ( \widetilde{\Omega} ))$, and by H\"{o}lder's inequality, it follows that, as $\delta \rightarrow 0$,
\begin{equation*}
  \begin{aligned}
    \| I_3 \|_{ L^1 (0, T; L^{ \frac{\gamma}{\gamma-1} } ) } \rightarrow 0 \,.
  \end{aligned}
\end{equation*}
Therefore, we have, as $\delta \rightarrow 0$,
\begin{equation}\label{p'(r)-div-v-delta}
  \begin{aligned}
    {\rm p}' ( r_{t,x}^\delta ) \dv {\bf v}_{t,x}^\delta - ( {\rm p}' (r) \dv {\bf v} )_{t,x}^\delta \rightarrow 0 \ \ \textrm{ in } \; L^1 (0, T; L^{ \frac{\gamma}{\gamma-1} } ) \,.
  \end{aligned}
\end{equation}

Putting \eqref{P''-E1}, \eqref{P''E1-delta}, \eqref{dt-P'(r)-delta}, \eqref{v-grad-P'-delta} and \eqref{p'(r)-div-v-delta} together, it confirms that
\begin{equation}\label{P''E1(r-v-delta)}
  \begin{aligned}
    {\rm P}'' (r_{t,x}^\delta) E_1 (r_{t,x}^\delta, {\bf v}_{t,x}^\delta) \rightarrow {\rm P}'' (r) E_1 (r, {\bf v}) \ \ \textrm{ strongly in } \; L^1 (0, T; L^{ \frac{\gamma}{\gamma-1} } ) \,, \textrm{ as } \delta \rightarrow 0 \,.
  \end{aligned}
\end{equation}

Using the definition of $E_2 (r, {\bf v})$ in \eqref{E1-2}, by Lemma \ref{mollifier-derivative}, we express $\frac{1}{r_{t,x}^\delta} E_2 ( r_{t,x}^\delta, {\bf v}_{t,x}^\delta )$ as
\begin{align*}
    \frac{1}{r_{t,x}^\delta} E_2 (r_{t,x}^\delta, {\bf v}_{t,x}^\delta) = & \partial_t {\bf v}_{t,x}^\delta + {\bf v}_{t,x}^\delta \cdot \nabla {\bf v}_{t,x}^\delta + \frac{1}{r_{t,x}^\delta} \nabla p ( r_{t,x}^\delta ) - \frac{1}{r_{t,x}^\delta} \dv \mathbb{S} ( \nabla {\bf v}_{t,x}^\delta ) \\
    = & \Big ( \frac{1}{r} E_2 (r, {\bf v}) \Big )_{t,x}^\delta + P'' ( r_{t,x}^\delta ) \nabla r_{t,x}^\delta - ( P'' ( r ) \nabla r )_{t,x}^\delta \\
    & + {\bf v}_{t,x}^\delta \cdot \nabla {\bf v}_{t,x}^\delta - ( {\bf v} \cdot \nabla {\bf v} )_{t,x}^\delta - \frac{1}{r_{t,x}^\delta} \dv \mathbb{S} ( \nabla {\bf v}_{t,x}^\delta ) + \frac{1}{r} \dv \mathbb{S} ( \nabla {\bf v} ) \,.
\end{align*}
Taking a similar argument to \eqref{P''E1(r-v-delta)}, we have
\begin{equation}\label{1/rE2(r-v-delta)}
  \begin{aligned}
    \frac{1}{r_{t,x}^\delta} E_2 (r_{t,x}^\delta, {\bf v}_{t,x}^\delta) \rightarrow \frac{1}{r} E_2 (r, {\bf v}) \ \ \textrm{ strongly in } \, L^1 ( 0, T; L^{ \frac{2\gamma}{\gamma-1} } ) \,.
  \end{aligned}
\end{equation}

Recalling the expression $\Lambda ( \cdot )$ in \eqref{Lam(v)}, noticing that $\| \eta^\delta \|_{L^1 (\mathbb{R}^3)} = 1$ and $\| \tilde{\eta}^\delta \|_{L^1 (\mathbb{R})} = 1$, and by Lemmas \ref{convolution} and \ref{mollifier-derivative}, it yields
\begin{equation}\label{Lam(v-delta)}
  \begin{aligned}
    \Lambda ( {\bf v}_{t,x}^\delta ) \leq \Lambda ( {\bf v} ) \,.
  \end{aligned}
\end{equation}

By \eqref{grad-v-delta}, \eqref{div-v-delta}, \eqref{v-delta} and \eqref{r-delta-infty}, and note  the definition of ${\rm LS} (\rho, {\bf u}; \cdot, \cdot)$ in \eqref{relative-energy-ineq}, we have
\begin{equation*}
  \begin{aligned}
    & {\rm LS} (\rho, {\bf u}; r_{t,x}^\delta, {\bf v}_{t,x}^\delta) \rightarrow {\rm LS} (\rho, {\bf u}; r, {\bf v}) \, \textrm{ as } \delta \rightarrow 0 \,.
  \end{aligned}
\end{equation*}
In view of \eqref{v-delta}, \eqref{r-delta-infty}, \eqref{P''E1(r-v-delta)}, \eqref{1/rE2(r-v-delta)} and \eqref{Lam(v-delta)}, and note that  ${\rm RS} (\rho, {\bf u}; \cdot, \cdot)$ in \eqref{relative-energy-ineq}, we have
\begin{equation*}
  \begin{aligned}
    & {\rm RS} (\rho, {\bf u}; r_{t,x}^\delta, {\bf v}_{t,x}^\delta) \leq {\rm RS} (\rho, {\bf u}; r, {\bf v}) \, \textrm{ as } \delta \rightarrow 0 \,.
  \end{aligned}
\end{equation*}
So we proved it.

\end{proof}

\smallskip

\section{ Proof of Theorem \ref{thm-existence-dissipative-sol} }\label{Sec:thm-exist}

This section aims to present the proof of Theorem \ref{thm-existence-dissipative-sol} with focusing on  the periodic case. We will also point out
the difference between the Dirichlet boundary case and the periodic case.

Our proof is starting with the following modified Brenner model:
\begin{align}\label{approxi-system}
\left\{
 \begin{array}{ll}
    \partial_t \rho^\eps + \dv ( \rho^\eps {\bf u}^\eps ) = \eps \Delta \rho^\eps \,, \\
    \partial_t (\rho^\eps {\bf u}^\eps) + \dv ( \rho^\eps {\bf u}^\eps \otimes {\bf u}^\eps ) + \nabla {\rm p}(\rho^\eps) + \eps^a \nabla (\rho^\eps)^\beta = \dv \mathbb{S} ( \nabla {\bf u}^\eps ) + \eps \dv ( {\bf u}^\eps \otimes \nabla \rho^\eps ) \,,
 \end{array}\right.
\end{align}
where $\eps \in (0, 1]$ is a small parameter, $\beta > \max \{ 4, \gamma \}$, and $a$ is any positive constant. Here, we put the artificial pressure term $\eps^a \nabla (\rho^\eps)^\beta$ and the artificial diffusion term $\eps \Delta \rho^\eps$ at the same level, which differs from the approximation model in \cite{FNP-jmfm-2001}. We consider the approximate initial data
\begin{equation}\label{approxi-initial}
  \begin{aligned}
    ( \rho^\eps,  {\bf u}^\eps ) |_{t = 0} = ( \rho^\eps_0, {\bf u}^\eps_0 ) \,,
  \end{aligned}
\end{equation}
satisfying
\begin{align}\label{approxi-initial-condi}
    & \rho^\eps_0 \in C^3 ( \Omega ) \,, \;\; \int \rho^\epsilon_0 \dd x \geq C_1 >0 \,, \;\;  0 < \epsilon \leq \rho^\eps_0 \leq \epsilon^{-\frac{a}{2\beta}} \,, \;\; {\bf u}^\eps_0 \in C^3 ( \Omega ) \,, \nonumber\\
    & \rho^\eps_0 \rightarrow \rho_0 \textrm{ strongly in } L^{\gamma} \,, \;\; \sqrt{\rho^\eps_0} {\bf u}^\eps_0 \rightarrow \frac{{\bf m}_0}{\sqrt{\rho_0}} \textrm{ strongly in } L^2 \,, \textrm{ as } \eps \rightarrow0 \,,
\end{align}
where the positive constant $C_1 \leq C_{\rho_0}$ is independent of $\epsilon$, and $( \rho_0, {\bf m}_0 )$ satisfies \eqref{periodic-initial}. When we consider the Dirichlet boundary case, the condition $\int \rho^\epsilon_0 \dd x \geq C_1 >0$ in \eqref{approxi-initial-condi} can be removed and $( \rho_0, {\bf m}_0 )$ satisfies \eqref{Dirichlet-initial}. It needs to add the boundary conditions $\nabla \rho^\eps \cdot {\bf n} |_{\partial \Omega} = 0$ and ${\bf u}^\eps |_{\partial \Omega} = 0$.

For any fixed $\eps >0$ and any $T \in (0, \infty)$, by the Faedo-Galerkin approximation adopted by Feireisl et al. (\cite{FNP-jmfm-2001}, Proposition 2.1), the system \eqref{approxi-system}-\eqref{approxi-initial} has a global weak solution $( \rho^\eps, {\bf u}^\eps )$, which satisfies the energy differential inequality
\begin{align}\label{dt-rho-uu}
    & {\rm \frac{d}{dt}} \int \frac{1}{2} \rho^\eps |{\bf u}^\eps|^2 + {\rm P}(\rho^\eps) + \eps^a {\rm Q}(\rho^\eps) \dd x + \int \mathbb{S} (\nabla {\bf u}^\eps) : \nabla {\bf u}^\eps \dd x \nonumber\\
    & + \int \eps {\rm P}''(\rho^\eps) |\nabla \rho^\eps|^2 + \eps^{1+a} {\rm Q}''(\rho^\eps) |\nabla \rho^\eps|^2 \dd x \leq 0 \,,
\end{align}
for any $t \in [ 0, T ]$, where ${\rm P}(\rho^\eps) = \frac{A}{\gamma - 1} (\rho^\eps)^\gamma$ and ${\rm Q}(\rho^\eps) = \frac{1}{\beta - 1} (\rho^\eps)^\beta$.

By \eqref{approxi-initial-condi} and Lemma \ref{Korn-type-ineq}, it follows from \eqref{dt-rho-uu} that
\begin{align}
    & \sup_{t \in [0, T]} \| \sqrt{ \rho^\eps } {\bf u}^\eps \|_{L^2} \leq C \,, \label{esti-rhou-L2}\\
    & \sup_{t \in [0,T]} \| \rho^\eps \|_{L^\gamma} \leq C  \,, \label{esti-rho-Lgamma}\\
    & \int_0^T \| {\bf u}^\eps \|_{H^1} \dd t \leq C \,, \label{esti-u-H1}\\
    & \eps \int_0^T \int | \rho^\eps |^{\gamma-2} |\nabla \rho^\eps|^2 \dd x \dd t \leq C \,. \label{esti-eps-rho-L2}
\end{align}
Here, the estimate \eqref{esti-u-H1} requires $\gamma \geq \frac{6}{5}$ from Lemma \ref{Korn-type-ineq}. (For the Dirichlet boundary case, the estimate \eqref{esti-u-H1} is hold for $\gamma > 1$ by Poincar\'{e}'s inequality.)

The conclusion (\cite{FNP-jmfm-2001}, Proposition 2.1) tells us that
\begin{equation*}
  \begin{aligned}
    \partial_t \rho^\eps + \dv ( \rho^\eps {\bf u}^\eps ) = \eps \Delta \rho^\eps \ \ \textrm{ a.e. } x \in (0,T) \times \Omega \,.
  \end{aligned}
\end{equation*}
Multiplying $B' (\rho^\eps)$ on the both sides of above equation, we arrive at
\begin{align}\label{renorm-conti-b}
    & \partial_t B ( \rho^\eps ) + \dv ( B(\rho^\eps) {\bf u}^\eps ) + ( B'(\rho^\eps) \rho^\eps - B(\rho^\eps) ) \dv {\bf u}^\eps \nonumber\\
    = & \eps \dv ( B'(\rho^\eps) \nabla \rho^\eps ) - \eps B''(\rho^\eps) | \nabla \rho^\eps |^2  \,,
\end{align}
where $B \in C ( [0, \infty) ) \cap C^2 ( (0, \infty) )$ with $B'(z) = 0$ for large $z \in \mathbb{R}^+$.

On the one hand, taking $B(z) = z \ln z$ for $z \in [0, 1]$ in \eqref{renorm-conti-b}, and integrating the result over $(0,T) \times \{ x \in \Omega : 0 < \rho^\eps \leq 1 \}$, it follows from \eqref{esti-rho-Lgamma} and \eqref{esti-u-H1} that
\begin{align*}
    & \eps \int_0^T \int_{ \{ x : 0 < \rho^\eps \leq 1 \} } ( \rho^\eps )^{-1} | \nabla \rho^\eps |^2 \dd x \dd t \\
    = & - \int_{ \{ x : \rho^\eps \leq 1 \} } \rho^\eps \ln \rho^\eps \dd x \big |_0^T - \int_0^T \int_{ \{ x : \rho^\eps \leq 1 \} } \rho^\eps \dv {\bf u}^\eps \dd x \dd t \leq C \,.
\end{align*}
On the other hand, by \eqref{esti-eps-rho-L2}, it infers that
\begin{align*}
    \eps \int_0^T \int_{ \{ x : \rho^\eps \geq 1 \} } ( \rho^\eps )^{-1} | \nabla \rho^\eps |^2 \dd x \dd t \leq \eps \int_0^T \int ( \rho^\eps )^{ \gamma - 1 } \Big| \frac{\nabla \rho^\eps}{ (\rho^\eps)^\frac{1}{2} } \Big|^2 \dd x \dd t \leq C \,.
\end{align*}
Then, it implies that
\begin{equation}\label{rho-1/2-grad-rho}
  \begin{aligned}
    \eps^{ \frac12 } (\rho^\eps)^{-\frac12} \nabla \rho^\eps \in L^2 (0,T; L^2) \,.
  \end{aligned}
\end{equation}

Similar to \eqref{rho-1/2-grad-rho}, if we take $B(z) = - 4 \sqrt{z}$ for $z \in [0, 1]$, then we have
\begin{equation}\label{rho-3/4-grad-rho}
  \begin{aligned}
    \eps^{ \frac12 } (\rho^\eps)^{-\frac34} \nabla \rho^\eps \in L^2 (0,T; L^2) \,.
  \end{aligned}
\end{equation}

By the estimates \eqref{esti-rho-Lgamma} and \eqref{rho-1/2-grad-rho}, and  H\"{o}lder's inequality, one has
\begin{equation}\label{grad-rho}
  \begin{aligned}
    \eps^{ \frac{1}{2} } \nabla \rho^\eps \in L^2 (0, T; L^{ \frac{2\gamma}{\gamma + 1} }) \,.
  \end{aligned}
\end{equation}

Using the energy estimates \eqref{esti-rho-Lgamma} and \eqref{esti-u-H1}, up to a subsequence $(\rho^\eps, {\bf u}^\eps)$ without relabeled, there exists a weak limit $(\rho, {\bf u})$ such that
\begin{align}
    & \rho^\eps \rightharpoonup \rho \ \ \textrm{ weakly-} \star \textrm{ in } \; L^\infty (0, T; L^\gamma) \,, \label{rho-limit-1} \\
    & {\bf u}^\eps \rightharpoonup {\bf u} \ \ \textrm{ weakly in } \; L^2 (0, T; H^1) \,. \label{u-limit}
\end{align}

Notice that $\sqrt{\rho^\eps} \in L^\infty (0, T; L^{2\gamma})$ from \eqref{esti-rho-Lgamma}, there exists a function $\tilde{\rho}$ such that
\begin{equation}\label{sqrt-rho-L2gamma}
  \begin{aligned}
    \sqrt{\rho^\eps} \rightharpoonup \sqrt{ \tilde{\rho} } \ \ \textrm{ weakly-} \star \textrm{ in } \; L^\infty (0, T; L^{2\gamma}) \,.
  \end{aligned}
\end{equation}
Taking $B(\rho^\eps) = \sqrt{\rho^\eps}$ in \eqref{renorm-conti-b}, and by the estimates \eqref{esti-rhou-L2}, \eqref{esti-rho-Lgamma}, \eqref{esti-u-H1}, \eqref{rho-1/2-grad-rho} and \eqref{rho-3/4-grad-rho}, then we have
\begin{align}\label{pt-sqrt-rho}
    \partial_t (\sqrt{\rho^\eps}) = & - \dv (\sqrt{\rho^\eps} {\bf u}^\eps) + \frac12 \sqrt{\rho^\eps} \dv {\bf u}^\eps + \eps \dv \Big ( \frac{1}{ 2 \sqrt{\rho^\eps} } \nabla \rho^\eps \Big ) + \frac14 \eps (\rho^\eps)^{-\frac32} | \nabla \rho^\eps |^2 \nonumber\\
    \in & L^\infty (0, T; W^{-1,2}) + L^2 (0, T; L^{ \frac{2\gamma}{\gamma+1} }) + L^2 (0, T; W^{-1,2}) +L^2(0, T; L^2) \nonumber\\
    \subset & L^2 (0, T; W^{-1, \frac{2\gamma}{\gamma+1}}) \,.
\end{align}
With the help of Lemma \ref{product-weak-converge}, we have
\begin{equation}\label{rho-limit-2}
  \begin{aligned}
    \rho^\eps = \sqrt{ \rho^\eps } \sqrt{ \rho^\eps } \rightarrow \tilde{ \rho } \ \ \textrm{ in } \; \mathscr{D}' ( (0, T) \times \Omega ) \,.
  \end{aligned}
\end{equation}
Combining \eqref{rho-limit-1} and \eqref{rho-limit-2}, the uniqueness of limit implies, for a.e. $(t, x) \in (0, T) \times \Omega$,
\begin{equation}\label{unique-rho}
  \begin{aligned}
    \rho = \tilde{\rho} \,.
  \end{aligned}
\end{equation}

By \eqref{sqrt-rho-L2gamma}, \eqref{pt-sqrt-rho} and \eqref{unique-rho}, and using Lemma \ref{conti-time-weak} gives
\begin{equation*}
    \sqrt{\rho^\eps} \rightarrow \sqrt{\rho} \ \ \textrm{ in } \; C_w ( [0, T]; L^{2\gamma} ) \,.
\end{equation*}
Since $2\gamma > 2 > \frac{6}{5}$, by the interpolation relation $L^{2\gamma} \hookrightarrow\hookrightarrow H^{-1}$, we know
\begin{equation}\label{sqrt-rho-H-1}
  \begin{aligned}
    \sqrt{\rho^\eps} \rightarrow \sqrt{\rho} \ \  \textrm{ in } \; C( [0, T]; H^{-1} ) \,.
  \end{aligned}
\end{equation}

It follows from \eqref{u-limit} and \eqref{sqrt-rho-H-1} that
\begin{equation*}
    \sqrt{\rho^\eps} {\bf u}^\eps \rightarrow \sqrt{ \rho } {\bf u} \ \ \textrm{ in } \; \mathscr{D}' ( (0, T) \times \Omega ) \,.
\end{equation*}
Since $\sqrt{\rho^\eps} {\bf u}^\eps \in L^\infty (0, T; L^2)$ from \eqref{esti-rhou-L2}, one has
\begin{equation}\label{sqrt-rho-u}
  \begin{aligned}
    \sqrt{\rho^\eps} {\bf u}^\eps \rightharpoonup \sqrt{ \rho } {\bf u} \ \ \textrm{ weakly-} \star \textrm{ in } \; L^\infty (0, T; L^2) \,.
  \end{aligned}
\end{equation}

By \eqref{rho-limit-1}, \eqref{pt-sqrt-rho} and \eqref{sqrt-rho-u}, in view of Lemma \ref{product-weak-converge}, we have
\begin{equation*}
  \begin{aligned}
    \rho^\eps {\bf u}^\eps =\sqrt{ \rho^\eps } \sqrt{ \rho^\eps } { \bf u}^\eps \rightarrow \rho {\bf u} \ \ \textrm{ in } \; \mathscr{D}' ( (0, T) \times \Omega ) \,.
  \end{aligned}
\end{equation*}
Since $\rho^\eps {\bf u}^\eps \in L^\infty (0, T; L^{ \frac{2\gamma}{\gamma + 1} })$ from \eqref{esti-rhou-L2} and \eqref{esti-rho-Lgamma}, it implies
\begin{equation}\label{rho-u}
  \begin{aligned}
    \rho^\eps {\bf u}^\eps \rightharpoonup \rho {\bf u} \ \ \textrm{ weakly-} \star \textrm{ in } \; L^\infty (0, T; L^{ \frac{2\gamma}{\gamma + 1} }) \,.
  \end{aligned}
\end{equation}

From the discussion in Section \ref{Sec:regular}, we can choose smooth functions $r > 0$ and ${\bf v}$ satisfying the regularities \eqref{ds-regular1} and \eqref{ds-regular2}. Taking the inner production with $\eqref{approxi-system}_2$ by $- {\bf v}$, multiplying $\eqref{approxi-system}_1$ by $\frac{1}{2} |{\bf v}|^2 - {\rm P}'(r)$, integrating the result over $\Omega$ with respect to $x$, and by integration by parts, it deduces that
\begin{align}\label{dt-v-v2-P'(r)-Q'(r)}
    & {\rm \frac{d}{dt}} \int - \rho^\eps {\bf u}^\eps \cdot {\bf v} + \frac{1}{2} \rho^\eps |{\bf v}|^2 - {\rm P}'(r) \rho^\eps \dd x \nonumber\\
    = & \int \rho^\eps \partial_t {\bf v} \cdot ( {\bf v} - {\bf u}^\eps ) \dd x + \int \rho^\eps ( {\bf u}^\eps \cdot \nabla ) {\bf v} \cdot ( {\bf v} - {\bf u}^\eps ) \dd x - \int {\rm p}(\rho^\eps) \dv {\bf v} \dd x \nonumber\\
    & - \eps^a \int (\rho^\eps)^\beta \dv {\bf v} \dd x + \int \mathbb{S} (\nabla {\bf u}^\eps) : \nabla {\bf v} \dd x + \eps \int (\nabla \rho^\eps \cdot \nabla ) {\bf v} \cdot ( {\bf u}^\eps - {\bf v} ) \dd x \nonumber\\
    & - \int \rho^\eps \partial_t {\rm P}'(r) \dd x - \int \rho^\eps {\bf u}^\eps \cdot \nabla {\rm P}'(r) \dd x + \eps \int \nabla \rho^\eps \cdot \nabla {\rm P}'(r) \dd x \,.
\end{align}
Note that \eqref{pressure}, we rewrite it as
\begin{align}\label{dt-p(r)-Q(r)}
    {\rm \frac{d}{dt}} \int {\rm P}'(r) r - {\rm P}(r) \dd x = \int r \partial_t {\rm P}'(r) \dd x + \int r {\bf v} \cdot \nabla {\rm P}'(r) + {\rm p}(r) \dv {\bf v} \dd x \,.
\end{align}

Adding up \eqref{dt-rho-uu}, \eqref{dt-v-v2-P'(r)-Q'(r)} and \eqref{dt-p(r)-Q(r)}, and by means of the definitions of $E_1(r, {\bf v})$ and $E_2(r, {\bf v})$ in \eqref{E1-2}, we arrive at
\begin{equation*}
  \begin{aligned}
    & {\rm \frac{d}{dt}} \int \frac{1}{2} \rho^\eps |{\bf u}^\eps - {\bf v}|^2 + {\rm P}(\rho^\eps) - {\rm P}'(r) ( \rho^\eps - r ) - {\rm P}(r) + \eps^a {\rm Q}(\rho^\eps) \dd x \\
    & + \int \mathbb{S} ( \nabla {\bf u}^\eps - \nabla {\bf v} ) : \nabla ( {\bf u}^\eps - {\bf v} ) \dd x + \int \eps {\rm P}''(\rho^\eps) |\nabla \rho^\eps|^2 + \eps^{1+a} {\rm Q}''(\rho^\eps) |\nabla \rho^\eps|^2 \dd x \\
    \leq & - \int \rho ( {\bf u}^\eps - {\bf v} ) \cdot \mathbb{D} ( {\bf v} ) \cdot ( {\bf u}^\eps - {\bf v} ) \dd x - \int ( {\rm p}(\rho^\eps) - {\rm p}'(r) ( \rho^\eps - r ) - {\rm p}(r) ) \dv {\bf v} \dd x \\
    & - \eps^a \int (\rho^\eps)^\beta \dv {\bf v} \dd x + \int ( r - \rho^\eps ) {\rm P}''(r) E_1 ( r, {\bf v} ) \dd x + \int \frac{\rho^\eps}{r} E_2 ( r, {\bf v} ) \cdot ( {\bf v} - {\bf u}^\eps ) \dd x \\
    & + \int \frac{\rho^\eps - r}{r} \dv \mathbb{S} (\nabla {\bf v}) \cdot ( {\bf v} - {\bf u}^\eps ) \dd x + \eps \int (\nabla \rho^\eps \cdot \nabla ) {\bf v} \cdot ( {\bf u}^\eps - {\bf v} ) \dd x + \eps \int \nabla \rho^\eps \cdot \nabla {\rm P}'(r) \dd x \,.
  \end{aligned}
\end{equation*}
Using the same way as \eqref{r-S-u-v-torus} to deal with the third term from bottom on the right-hand side of the above equality, then applying Gr\"{o}nwall's inequality to the result gives
\begin{align}\label{relative-energy-ineq-eps}
    & \int \frac{1}{2} \rho^\eps |{\bf u}^\eps - {\bf v}|^2 + {\rm P}(\rho^\eps) - {\rm P}'(r) ( \rho^\eps - r ) - {\rm P}(r) + \eps^a {\rm Q}(\rho^\eps) \dd x \nonumber\\
    & + \frac{1}{2} \int_0^t \int \mathbb{S} ( \nabla {\bf u}^\eps - \nabla {\bf v} ) : \nabla ( {\bf u}^\eps - {\bf v} ) \dd x \dd s \leq \mathscr{R}_0 + \sum_{i=1}^4 \mathscr{R}_i \,,
\end{align}
where
\begin{align*}
    & \mathscr{R}_0 = \exp (\int_0^t C_0 \Lambda ( {\bf v} ) \dd s ) \int\big( \frac{1}{2} \rho^\eps_0 | {\bf u}^\eps_0 - {\bf v}_0 |^2 + {\rm P}(\rho^\eps_0) - {\rm P}'(r_0) (\rho^\eps_0 - r_0) - {\rm P}(r_0) + \eps^a {\rm Q}(\rho^\eps_0) \big)\dd x \,, \\
    & \mathscr{R}_1 = \int_0^t \int \exp \Big ( \int_s^t C_0 \Lambda ( {\bf v} ) \dd \tau \Big ) ( r - \rho^\eps ) {\rm P}''(r) E_1 (r, {\bf v}) \dd x \dd s \,, \\
    & \mathscr{R}_2 = \int_0^t \int \exp \Big ( \int_s^t C_0 \Lambda ( {\bf v} ) \dd \tau \Big ) \frac{\rho^\eps}{r} E_2 (r, {\bf v}) \cdot ( {\bf v} - {\bf u}^\eps ) \dd x \dd s \,, \\
    & \mathscr{R}_3 = \eps \int_0^t \int \exp \Big ( \int_s^t C_0 \Lambda ( {\bf v} ) \dd \tau \Big ) (\nabla \rho^\eps \cdot \nabla ) {\bf v} \cdot ( {\bf u}^\eps - {\bf v} ) \dd x \dd s \,, \\
    & \mathscr{R}_4 = \eps \int_0^t \int \exp \Big ( \int_s^t C_0 \Lambda ( {\bf v} ) \dd \tau \Big ) \nabla \rho^\eps \cdot \nabla {\rm P}'(r) \dd x \dd s \,.
\end{align*}

The next step is to recover a dissipative solution by passing to the limits in  \eqref{relative-energy-ineq-eps} as $\varepsilon$ tends to zero.
We first deal with the left-hand side of \eqref{relative-energy-ineq-eps}. By the weak convergences \eqref{sqrt-rho-u} and \eqref{u-limit}, and in view of the low semi-continuous of $L^2$-norm, we get, as $\eps \rightarrow 0$,
\begin{align*}
    & \int \frac{1}{2} \rho^\eps | {\bf u}^\eps |^2 \dd x = \int \frac{1}{2} | \sqrt{\rho^\eps} {\bf u}^\eps |^2 \dd x \geq \int \frac{1}{2} \rho | {\bf u} |^2 \dd x \,,\\
    & \int_0^t \int \mathbb{S} ( \nabla {\bf u}^\eps ) : \nabla ( {\bf u}^\eps ) \dd x \dd s \geq \int_0^t \int \mathbb{S} ( \nabla {\bf u} ) : \nabla ( {\bf u} ) \dd x \dd s \,.
\end{align*}
It follows from \eqref{rho-u}, \eqref{rho-limit-1} and \eqref{u-limit} that, as $\eps \rightarrow 0$,
\begin{align*}
    & - \int \rho^\eps {\bf u}^\eps \cdot {\bf v} \dd x \rightarrow - \int \rho {\bf u} \cdot {\bf v} \dd x \,, \quad \int \frac{1}{2} \rho^\eps | {\bf v} |^2 \dd x \rightarrow \int \frac{1}{2} \rho | {\bf v} |^2 \dd x \,, \\
    & - \int_0^t \int \mathbb{S} ( \nabla {\bf u}^\eps ) : \nabla {\bf v} + \mathbb{S} ( \nabla {\bf v} ) : \nabla {\bf u}^\eps \dd x \dd s \rightarrow - \int_0^t \int \mathbb{S} ( \nabla {\bf u} ) : \nabla {\bf v} + \mathbb{S} ( \nabla {\bf v} ) : \nabla {\bf u} \dd x \dd s \,.
\end{align*}
By the convexity of ${\rm P}(\cdot)$ and \eqref{rho-limit-1}, one has, as $\eps \rightarrow 0$,
\begin{align*}
    \int {\rm P}(\rho^\eps) - {\rm P}'(r) ( \rho^\eps - r ) - {\rm P}(r) \dd x \geq \int {\rm P}(\rho) - {\rm P}'(r) ( \rho - r ) - {\rm P}(r) \dd x \,.
\end{align*}
Then, as $\eps \rightarrow 0$,
\begin{align}\label{left-side}
    & \int \frac{1}{2} \rho^\eps |{\bf u}^\eps - {\bf v}|^2 + {\rm P}(\rho^\eps) - {\rm P}'(r) ( \rho^\eps - r ) - {\rm P}(r) + \eps^a {\rm Q}(\rho^\eps) \dd x \nonumber\\
    & + \frac{1}{2} \int_0^t \int \mathbb{S} ( \nabla {\bf u}^\eps - \nabla {\bf v} ) : \nabla ( {\bf u}^\eps - {\bf v} ) \dd x \dd s \nonumber\\
    \geq & \int \frac{1}{2} \rho | {\bf u} - {\bf v} |^2 + {\rm P}(\rho) - {\rm P}'(r) ( \rho - r ) - {\rm P}(r) \dd x \nonumber\\
    & + \frac{1}{2} \int_0^t \int \mathbb{S} ( \nabla {\bf u} - \nabla {\bf v} ) : \nabla ( {\bf u} - {\bf v} ) \dd x \dd s \,.
\end{align}

Next, we tackle with the reminder terms $\mathscr{R}_i \; (i = 0,1,2,3,4)$ on the right-hand side of \eqref{relative-energy-ineq-eps}. Before this, recalling the definition $\Lambda ( {\bf v} )$ in \eqref{Lam(v)} and the regularities $\mathbb{D} ( {\bf v} ) \in L^1 (0, T; L^\infty)$ and $\dv \mathbb{S} ( \nabla {\bf v} ) \in L^2 (0, T; L^{ \frac{6 \gamma}{5 \gamma - 3} }) \cap L^1 (0, T; L^{ \frac{2 \gamma}{\gamma - 1} })$ in \eqref{ds-regular1}, we know that
\begin{equation*}
  \begin{aligned}
    \exp \Big ( \int_s^t C_0 \Lambda ( {\bf v} ) \dd \tau \Big ) \leq \exp \Big ( \int_0^t C_0 \Lambda ( {\bf v} ) \dd \tau \Big ) \leq C \,.
  \end{aligned}
\end{equation*}
Now, we begin to deal with the terms $\mathscr{R}_i$. By the initial conditions \eqref{approxi-initial-condi}, we have, as $\eps \rightarrow 0$,
\begin{equation}\label{R0}
  \begin{aligned}
    \mathscr{R}_0 \rightarrow \exp \Big ( \int_0^t C_0 \Lambda ( {\bf v} ) \dd s \Big ) \int \frac{1}{2} \rho_0 \Big | \frac{{\bf m}_0}{\rho_0} - {\bf v}_0 \Big |^2 + {\rm P}(\rho_0) - {\rm P}'(r_0) (\rho_0 - r_0) - {\rm P}(r_0) \dd x \,.
  \end{aligned}
\end{equation}
For the terms $\mathscr{R}_1$ and $\mathscr{R}_2$, it follows from \eqref{rho-limit-1} and \eqref{rho-u} that, as $\eps \rightarrow 0$,
\begin{align}
    & \mathscr{R}_1 \rightarrow \int_0^t \int \exp \Big ( \int_s^t C_0 \Lambda ( {\bf v} ) \dd \tau \Big ) ( r - \rho ) {\rm P}''(r) E_1 (r, {\bf v}) \dd x \dd s \,, \label{R1} \\
    & \mathscr{R}_2 \rightarrow \int_0^t \int \exp \Big ( \int_s^t C_0 \Lambda ( {\bf v} ) \dd \tau \Big ) \frac{\rho}{r} E_2 (r, {\bf v}) \cdot ( {\bf v} - {\bf u} ) \dd x \dd s \,. \label{R2}
\end{align}
We turn to the term $\mathscr{R}_3$. By \eqref{rho-1/2-grad-rho}, \eqref{grad-rho} and \eqref{sqrt-rho-u}, it implies that
\begin{align}
    & \eps ( \sqrt{\rho^\eps} )^{-1} \nabla \rho^\eps \cdot( \sqrt{\rho^\eps} {\bf u}^\eps ) \rightarrow 0 \ \ \textrm{ in } \; L^2 (0, T; L^1) \,, \label{eps-grad-rho-u} \\
    & \eps \nabla \rho^\eps \rightarrow 0 \ \ \textrm{ in } \; L^2 (0, T; L^{\frac{2\gamma}{\gamma + 1}}) \label{eps-grad-rho} \,.
\end{align}
In view of \eqref{eps-grad-rho-u} and \eqref{eps-grad-rho}, together with the regularities ${\bf v} \in C ([0, T]; L^{ \frac{2 \gamma}{\gamma - 1} })$ in \eqref{ds-regular1} and $\nabla {\bf v} \in L^2 (0, T; L^\infty)$ in \eqref{ds-regular2}, we have, as $\eps \rightarrow 0$,
\begin{align}\label{R3}
    \mathscr{R}_3 = & \eps \int_0^t \int \exp \Big ( \int_s^t C_0 \Lambda ( {\bf v} ) \dd \tau \Big ) ( \sqrt{\rho^\eps} )^{-1} \nabla \rho^\eps \cdot \nabla {\bf v} \cdot \sqrt{\rho^\eps} {\bf u}^\eps \dd x \dd s \nonumber\\
    & - \eps \int_0^t \int \exp \Big ( \int_s^t C_0 \Lambda ( {\bf v} ) \dd \tau \Big ) (\nabla \rho^\eps \cdot \nabla ) {\bf v} \cdot {\bf v} ] \dd x \dd s \rightarrow 0 \,.
\end{align}
Finally, for the terms $\mathscr{R}_4$, noticing that $r \geq r_1$ if $1 < \gamma \leq 2$, $r_1 \leq r \leq r_2$ if $\gamma > 2$ and $\nabla r \in L^\infty (0, T; L^{ \frac{2 \gamma}{\gamma - 1} })$ in \eqref{ds-regular1} and \eqref{ds-regular2}, by H\"{o}lder's inequality, it follows from \eqref{eps-grad-rho} that, as $\eps \rightarrow 0$,
\begin{equation}\label{R4}
  \begin{aligned}
    & \mathscr{R}_4 \leq C \| \eps \nabla \rho^\eps \|_{ L^2 (0, T; L^{\frac{2\gamma}{\gamma + 1}}) } \| \nabla r \|_{ L^2 (0, T; L^{ \frac{2 \gamma}{\gamma - 1} }) } \rightarrow 0 \,.
  \end{aligned}
\end{equation}

Thus, we complete the proof of Theorem \ref{thm-existence-dissipative-sol}.

\section{Proof of Theorem \ref{thm-weak-dissipative}}\label{Sec:ws-ds}
The goal of this section is to show  that the weak solution of the compressible isentropic Navier-Stokes equations is also a dissipative solution in the sense of Definition \ref{def-dissipative-sol}.

Let $(\rho, {\bf u})$ be the weak solution of the problem \eqref{com-NS}-\eqref{boundary1} or \eqref{com-NS}, \eqref{initial} and \eqref{boundary2}. Let $ \chi \in C_c^\infty ( (0,T) )$ and $\phi_m \in C_c^\infty ( \Omega )$. We can also let $(r, {\bf v})$ be the smooth functions satisfying \eqref{ds-regular1} and \eqref{ds-regular2} by the arguments in Section \ref{Sec:regular}. Taking the test function $\Phi = \chi \phi_m {\bf v}$ in \eqref{weak-mome-D} of Definition \ref{def-weak-sol} gives
\begin{align}\label{ws-rho-uv}
    0 = & \int_0^T \int \rho {\bf u} \cdot \partial_t ( \chi \phi_m {\bf v} ) \dd x \dd t + \int_0^T \int \rho {\bf u} \otimes {\bf u} : \nabla ( \chi \phi_m {\bf v} ) \dd x \dd t \nonumber\\
    & + \int_0^T \int {\rm p}(\rho) \dv ( \chi \phi_m {\bf v} ) \dd x \dd t - \int_0^T \int \mathbb{S} (\nabla {\bf u} ) : \nabla ( \chi \phi_m {\bf v} ) \dd x \dd t \nonumber\\
    = & \int_0^T \partial_t \chi \int \phi_m \rho {\bf u} \cdot {\bf v} \dd x \dd t + \int_0^T \chi \int \phi_m \rho {\bf u} \cdot \partial_t {\bf v} \dd x \dd t \nonumber\\
    & + \int_0^T \chi \int \phi_m \rho {\bf u} \cdot \nabla {\bf v} \cdot {\bf u} \dd x \dd t + \int_0^T \chi \int \phi_m {\rm p}(\rho) \dv {\bf v} \dd x \dd t \nonumber\\
    & - \int_0^T \chi \int \phi_m \mathbb{S} (\nabla {\bf u} ) : \nabla {\bf v} \dd x \dd t + \int_0^T \chi \int \rho {\bf u} \otimes {\bf u} : ( \nabla \phi_m \otimes {\bf v} ) \dd x \dd t \nonumber\\
    & + \int_0^T \chi \int {\rm p}(\rho) \nabla \phi_m \cdot {\bf v} \dd x \dd t - \int_0^T \chi \int \mathbb{S} (\nabla {\bf u} ) : ( \nabla \phi_m \otimes {\bf v} ) \dd x \dd t \,.
\end{align}

Choosing $\Psi = \frac{1}{2} |{\bf v}|^2 \chi \phi_m$ and $\Psi = {\rm P}'(r) \chi \phi_m$ in \eqref{weak-conti-D} of Definition \ref{def-weak-sol}, respectively, one sees that
\begin{align}\label{ws-rho-vv}
    0 = & \int_0^T \int \rho \partial_t \Big ( \frac12 | {\bf v} |^2 \chi \phi_m \Big ) \dd x \dd t + \int_0^T \int \rho {\bf u} \cdot \nabla \Big ( \frac12 | {\bf v} |^2 \chi \phi_m \Big ) \dd x \dd t \nonumber\\
    = & \int_0^T \partial_t \chi \int \phi_m \rho \Big ( \frac12 | {\bf v} |^2 \Big ) \dd x \dd t + \int_0^T \chi \int \phi_m \rho {\bf v} \cdot \partial_t {\bf v} \dd x \dd t \nonumber\\
    & + \int_0^T \chi \int \phi_m \rho {\bf u} \cdot \nabla {\bf v} \cdot {\bf v} \dd x \dd t + \int_0^T \chi \int \frac12 | {\bf v} |^2 \rho {\bf u} \cdot \nabla \phi_m \dd x \dd t \,,
\end{align}
and
\begin{align}\label{ws-rho-P'(r)}
    0 = & \int_0^T \int \rho \partial_t ( {\rm P}'(r) \chi \phi_m ) \dd x \dd t + \int_0^T \int \rho {\bf u} \cdot \nabla ( {\rm P}'(r) \chi \phi_m ) \dd x \dd t \nonumber\\
    = & \int_0^T \partial_t \chi \int \phi_m \rho {\rm P}'(r) \dd x \dd t + \int_0^T \chi \int \phi_m \rho \partial_t {\rm P}'(r) \dd x \dd t \nonumber\\
    & + \int_0^T \chi \int \phi_m \rho {\bf u} \cdot \nabla {\rm P}'(r) \dd x \dd t + \int_0^T \chi \int \rho {\rm P}'(r) {\bf u} \cdot \nabla \phi_m \dd x \dd t \,.
\end{align}

Noticing that the relations ${\rm P}'(r) r - {\rm P}(r) = {\rm p}(r)$ and ${\rm P}''(r) r = {\rm p}'(r)$, we have
\begin{align}\label{ws-P'(r)r-P(r)}
    0 = & \int_0^T {\rm \frac{d}{dt}} \int [ {\rm P}'(r) r - {\rm P}(r) ] \chi \phi_m \dd x \dd t \nonumber\\
    = & \int_0^T {\rm \frac{d}{dt}} \int {\rm p}(r) \chi \phi_m \dd x \dd t + \int_0^T \int \dv ( {\rm p}(r) {\bf v} \chi \phi_m ) \dd x \dd t \nonumber\\
    = & \int_0^T \partial_t \chi \int \phi_m [ {\rm P}'(r) r - {\rm P}(r) ] \dd x \dd t + \int_0^T \chi \int \phi_m r \partial_t {\rm P}'(r) \dd x \dd t \nonumber\\
    & + \int_0^T \chi \int \phi_m r {\bf v} \cdot \nabla {\rm P}'(r) \dd x \dd t + \int_0^T \chi \int \phi_m {\rm p}(r) \dv {\bf v} \dd x \dd t \nonumber\\
    & + \int_0^T \chi \int {\rm p}(r) {\bf v} \cdot \nabla \phi_m \dd x \dd t \,.
\end{align}

Collecting \eqref{ws-rho-uv}, \eqref{ws-rho-vv}, \eqref{ws-rho-P'(r)} and \eqref{ws-P'(r)r-P(r)} together, we obtain that
\begin{align}\label{ws-relative-energy1}
    & - \int_0^T \partial_t \chi \int \phi_m \Big [ - \rho {\bf u} \cdot {\bf v} + \frac{1}{2} | {\bf v} |^2 - {\rm P}'(r) \rho + {\rm P}'(r) r - {\rm P}(r) \Big ] \dd x \dd t \nonumber\\
    = & - \int_0^T \chi \int \phi_m \rho ( {\bf u} - {\bf v} ) \cdot \nabla {\bf v} \cdot ( {\bf u} - {\bf v} ) \dd x \dd t \nonumber\\
    & + \int_0^T \chi \int \phi_m \rho ( \partial_t {\bf v} + {\bf v} \cdot \nabla {\bf v} ) \cdot ( {\bf v} - {\bf u} ) \dd x \dd t \nonumber\\
    & + \int_0^T \chi \int \phi_m ( r - \rho ) \partial_t {\rm P}'(r) \dd x \dd t + \int_0^T \chi \int \phi_m ( r {\bf v} - \rho {\bf u} ) \cdot \nabla {\rm P}'(r) \dd x \dd t \nonumber\\
    & - \int_0^T \chi \int \phi_m ( {\rm p}(\rho) - {\rm p}(r) ) \dv {\bf v} \dd x \dd t + \int_0^T \chi \int \phi_m \mathbb{S} (\nabla {\bf u}) : \nabla {\bf v} \dd x \dd t \nonumber\\
    & + \mathcal{R}_{\chi \phi_m} \,,
\end{align}
where
\begin{align*}
    \mathcal{R}_{\chi \phi_m} = & \underbrace{ - \int_0^T \partial_t \chi \int ( 1 - \phi_m ) \Big [ \frac{1}{2} \rho | {\bf u} |^2 + {\rm P}(\rho) \Big ] \dd x \dd t }_{ \mathscr{X}_1 } \\
    & \underbrace{ - \int_0^T \chi \int \rho {\bf u} \otimes {\bf u} : ( \nabla \phi_m \otimes {\bf v} ) \dd x \dd t - \int_0^T \chi \int {\rm p}(\rho) {\bf v} \cdot \nabla \phi_m \dd x \dd t }_{ \mathscr{X}_2 } \\
    & + \underbrace{ \int_0^T \chi \int \mathbb{S} (\nabla {\bf u} ) : ( \nabla \phi_m \otimes {\bf v} ) \dd x \dd t }_{ \mathscr{X}_3 } + \underbrace{ \int_0^T \chi \int \frac12 | {\bf v} |^2 \rho {\bf u} \cdot \nabla \phi_m \dd x \dd t }_{ \mathscr{X}_4 } \\
    & \underbrace{ - \int_0^T \chi \int \rho {\rm P}'(r) {\bf u} \cdot \nabla \phi_m \dd x \dd t }_{ \mathscr{X}_5 } + \underbrace{ \int_0^T \chi \int {\rm p}(r) {\bf v} \cdot \nabla \phi_m \dd x \dd t }_{ \mathscr{X}_6 } \,.
\end{align*}

We endow a sequence $\phi_m \in C_c^\infty (\Omega)$ with
\begin{equation*}
  \begin{aligned}
    & 0 \leq \phi_m \leq 1 \,, \;\; \phi_m = 1 \textrm{ for } x \in \Omega \,, \; {\rm dist} (x, \partial \Omega) \geq \frac{1}{m} \,, \\
    & \phi_m \rightarrow 1 \,, \;\; | \nabla \phi_m | \leq 2 m \, \; \textrm{ for } x \in \Omega \,.
  \end{aligned}
\end{equation*}

Now, we tackle with the term $\mathcal{R}_{\chi \phi_m}$. For the term $\mathscr{X}_1$ in $\mathcal{R}_{\chi \phi_m}$, in view of \eqref{weak-est}, and using the Lebesgue's dominated convergence theorem, one has, as $m \rightarrow \infty$,
\begin{equation}\label{X1}
  \begin{aligned}
    \mathscr{X}_1 \rightarrow 0 \,.
  \end{aligned}
\end{equation}

For the term $\mathscr{X}_2$, by H\"{o}lder's inequality and \eqref{weak-est}, we get, as $m \rightarrow \infty$,
\begin{align}\label{X2}
    \mathscr{X}_2 & \leq \int_0^T \chi \int ( \rho | {\bf u} |^2 + {\rm p}(\rho) ) | \nabla \phi_m {\rm dist} (x, \partial \Omega) | | {\bf v} [ {\rm dist} (x, \partial \Omega) ]^{-1} | \dd x \dd t \nonumber\\
    & \leq C \int_0^T \| {\bf v} [ {\rm dist} (x, \partial \Omega) ]^{-1} \|_{L^\infty} \int_{ \{ x : \, {\rm dist} (x, \partial \Omega) \leq \frac{1}{m} \} } \rho | {\bf u} |^2 + {\rm p}(\rho) \dd x \dd t \nonumber\\
    & \leq C \sup_{0 \leq t \leq T} \int_{ \{ x : \, {\rm dist} (x, \partial \Omega) \leq \frac{1}{m} \} } \rho | {\bf u} |^2 + {\rm p}(\rho) \dd x \times \| {\bf v} [ {\rm dist} (x, \partial \Omega) ]^{-1} \|_{ L^1 ( 0, T; L^\infty ) } \nonumber\\
    & \rightarrow 0 \,.
\end{align}
Here, we have employed the fact that ${\bf v} [ {\rm dist} (x, \partial \Omega) ]^{-1} \in L^1(0, T; L^\infty)$. Indeed, by Hardy's inequality,
\begin{equation*}
  \begin{aligned}
    \| {\bf v} [ {\rm dist} (x, \partial \Omega) ]^{-1} \|_{L^p} \leq C \| \nabla {\bf v} \|_{L^p} \leq C \| \nabla {\bf v} \|_{L^\infty} \,, \text{ for } 1 < p < \infty \,,
  \end{aligned}
\end{equation*}
together with ${\bf v} \in L^1 (0, T; W^{1,\infty})$ (see Remark \ref{grad-v-regularity}) and Lemma \ref{Lp-limit-L-infty}, the conclusion holds.

With a similar argument to the term $\mathscr{X}_2$, by H\"{o}lder's inequality and Hardy's inequality, along with the regularity of $r$ and ${\bf v}$ in Theorem \ref{thm-weak-dissipative}, it infers that, as $m \rightarrow \infty$,
\begin{align}
    \mathscr{X}_3 & \leq \int_0^T \chi \int | \mathbb{S} ( \nabla {\bf u} ) | | \nabla \phi_m \rm{dist} (x, \partial \Omega) |  | {\bf v}[ {\rm dist} (x, \partial \Omega) ]^{-1} | \dd x \dd t \nonumber\\
    & \leq C \| \nabla {\bf u} \|_{ L^2(0, T; L^2) } \Big ( \int_0^T\int_{ \{ x : \, {\rm dist} (x, \partial \Omega) \leq \frac{1}{m} \} } | \nabla {\bf v} |^2 \dd x \dd t \Big )^{\frac12} \rightarrow 0 \,, \label{X3}\\
    \mathscr{X}_4 & \leq \int_0^T \chi \int \rho | {\bf u} | | \nabla \phi_m {\rm dist} (x, \partial \Omega) | \frac12 | {\bf v} |^2 [ {\rm dist} (x, \partial \Omega) ]^{-1} \dd x \dd t \nonumber\\
    & \leq C \| \rho {\bf u} \|_{ L^\infty(0, T; L^{\frac{2\gamma}{\gamma+1}}) } \int_0^T \Big ( \int_{ \{ x : \, {\rm dist} (x, \partial \Omega) \leq \frac{1}{m} \} } | {\bf v} \cdot \nabla {\bf v} |^{\frac{2\gamma}{\gamma - 1}} \dd x \Big )^{\frac{\gamma - 1}{2\gamma}} \dd t \rightarrow 0 \,, \label{X4}\\
    \mathscr{X}_5 & \leq \int_0^T \chi \int \rho | {\bf u} | | \nabla \phi_m {\rm dist} (x, \partial \Omega) | {\rm P}'(r) [ {\rm dist} (x, \partial \Omega) ]^{-1} \dd x \dd t \nonumber\\
    & \leq C \| \rho {\bf u} \|_{ L^\infty(0, T; L^{\frac{2\gamma}{\gamma+1}}) } \int_0^T \Big ( \int_{ \{ x : \, {\rm dist} (x, \partial \Omega) \leq \frac{1}{m} \} } | \nabla r |^{\frac{2\gamma}{\gamma - 1}} \dd x \Big )^{\frac{\gamma - 1}{2\gamma}} \dd t \rightarrow 0 \,, \label{X5}\\
    \mathscr{X}_6 & \leq \int_0^T \chi \int | {\rm p}(r) | | \nabla \phi_m {\rm dist} (x, \partial \Omega) | | {\bf v} [ {\rm dist} (x, \partial \Omega) ]^{-1} | \dd x \dd t \nonumber\\
    & \leq C \sup_{0 \leq t \leq T} \int_{ \{ x : \, {\rm dist} (x, \partial \Omega) \leq \frac{1}{m} \} } {\rm p}(r) \dd x \| {\bf v} [ {\rm dist} (x, \partial \Omega) ]^{-1} \|_{ L^1 ( 0, T; L^\infty ) } \rightarrow 0 \,. \label{X6}
\end{align}
Then, we have, as $m \rightarrow \infty$,
\begin{equation*}
  \begin{aligned}
    \mathcal{R}_{\chi \phi_m} \rightarrow 0 \,.
  \end{aligned}
\end{equation*}

Next, letting $m \rightarrow \infty$, if follows from \eqref{ws-relative-energy1} that
\begin{align*}
    & - \int_0^T \partial_t \chi \int \Big [ - \rho {\bf u} \cdot {\bf v} + \frac{1}{2} | {\bf v} |^2 - {\rm P}'(r) \rho + {\rm P}'(r) r - {\rm P}(r) \Big ] \dd x \dd t \nonumber\\
    = & - \int_0^T \chi \int \rho ( {\bf u} - {\bf v} ) \cdot \nabla {\bf v} \cdot ( {\bf u} - {\bf v} ) \dd x \dd t \nonumber\\
    & + \int_0^T \chi \int \rho ( \partial_t {\bf v} + {\bf v} \cdot \nabla {\bf v} ) \cdot ( {\bf v} - {\bf u} ) \dd x \dd t \nonumber\\
    & + \int_0^T \chi \int ( r - \rho ) \partial_t {\rm P}'(r) \dd x \dd t + \int_0^T \chi \int ( r {\bf v} - \rho {\bf u} ) \cdot \nabla {\rm P}'(r) \dd x \dd t \nonumber\\
    & - \int_0^T \chi \int ( {\rm p}(\rho) - {\rm p}(r) ) \dv {\bf v} \dd x \dd t + \int_0^T \chi \int \mathbb{S} (\nabla {\bf u}) : \nabla {\bf v} \dd x \dd t \,.
\end{align*}

Since $\chi \in C_c^\infty ( (0, T) )$ is arbitrary, then it holds that, for a.e. $t \in (0, T)$,
\begin{align}\label{ws-relative-energy-part}
    & {\rm \frac{d}{dt}} \int \Big [ - \rho {\bf u} \cdot {\bf v} + \frac{1}{2} | {\bf v} |^2 - {\rm P}'(r) \rho + {\rm P}'(r) r - {\rm P}(r) \Big ] \dd x \nonumber\\
    = & - \int \rho ( {\bf u} - {\bf v} ) \cdot \nabla {\bf v} \cdot ( {\bf u} - {\bf v} ) \dd x + \int \rho ( \partial_t {\bf v} + {\bf v} \cdot \nabla {\bf v} ) \cdot ( {\bf v} - {\bf u} ) \dd x \nonumber\\
    & + \int ( r - \rho ) \partial_t {\rm P}'(r) \dd x \dd t + \int ( r {\bf v} - \rho {\bf u} ) \cdot \nabla {\rm P}'(r) \dd x \nonumber\\
    & - \int ( {\rm p}(\rho) - {\rm p}(r) ) \dv {\bf v} \dd x + \int \mathbb{S} (\nabla {\bf u}) : \nabla {\bf v} \dd x \,.
\end{align}

Adding ${\rm \frac{d}{dt}} \int \frac{1}{2} \rho | {\bf u} |^2 + {\rm P}(\rho) \dd x + \int \mathbb{S} (\nabla {\bf u} ) : \nabla {\bf u} \dd x$ to both sides of \eqref{ws-relative-energy-part}, by means of the definitions $E_1 (r, {\bf v})$ and $E_2 (r, {\bf v})$ in \eqref{E1-2}, we have, for a.e. $t \in (0, T)$,
\begin{align}\label{ws-relative-energy-D}
    & {\rm \frac{d}{dt}} \mathcal{E}_{ws} ( \rho, {\bf u}; r, {\bf v} ) + \int \mathbb{S} (\nabla {\bf u} - \nabla {\bf v} ) : \nabla ( {\bf u} - {\bf v} ) \dd x \nonumber\\
    = & {\rm \frac{d}{dt}} \int \frac{1}{2} \rho | {\bf u} |^2 + {\rm P}(\rho) \dd x + \int \mathbb{S} (\nabla {\bf u} ) : \nabla {\bf u} \dd x \nonumber\\
    & - \int \rho ( {\bf u} - {\bf v} ) \cdot \mathbb{D} ( {\bf v} ) \cdot ( {\bf u} - {\bf v} ) \dd x - \int ( {\rm p}(\rho) - {\rm p}'(r) (\rho - r) - {\rm p}(r) ) \dv {\bf v} \dd x \nonumber\\
    & + \int ( r - \rho ) {\rm P}''(r) E_1 (r, {\bf v}) \dd x + \int \frac{\rho}{r} E_2 (r, {\bf v}) \cdot ( {\bf v} - {\bf u} ) \dd x \nonumber\\
    & + \int \frac{\rho - r}{r} \dv \mathbb{S} (\nabla {\bf v}) \cdot ( {\bf v} - {\bf u} ) \dd x \,,
\end{align}
where
\begin{equation*}
  \begin{aligned}
    \mathcal{E}_{ws} ( \rho, {\bf u}; r, {\bf v} ) = \int \frac{1}{2} \rho | {\bf u} - {\bf v} |^2 + {\rm P}(\rho) - {\rm P}'(r)( \rho - r ) - {\rm P}(r) \dd x \,.
  \end{aligned}
\end{equation*}

With the same way as \eqref{r-S-u-v-torus} to deal with the last term on the right-hand side of \eqref{ws-relative-energy-D}, applying Gr\"{o}nwall's inequality, and using the energy inequality \eqref{ws-weak-energy-ineq} in Definition \ref{def-weak-sol}, it yields, for $0 < \tilde{s} < t < T$,
\begin{align*}
    & \mathcal{E}_{ws} ( \rho, {\bf u}; r, {\bf v} ) ( t ) + \frac{1}{2} \int_{ \tilde{s} }^t \int \mathbb{S} (\nabla {\bf u} - \nabla {\bf v} ) : \nabla ( {\bf u} - {\bf v} ) \dd x \dd s \\
    \leq & \exp \Big ( \int_{ \tilde{s} }^t C_0 \Lambda ( {\bf v} ) \dd s \Big ) \mathcal{E}_{ws} ( \rho, {\bf u}; r, {\bf v} ) ( \tilde{s} ) \\
    & + \int_{ \tilde{s} }^t \int \exp \Big ( \int_s^t C_0 \Lambda ( {\bf v} ) \dd \tau \Big ) \big | ( r - \rho ) {\rm P}''(r) E_1 (r, {\bf v}) \big | \dd x \dd s \\
    & + \int_{ \tilde{s} }^t \int \exp \Big ( \int_s^t C_0 \Lambda ( {\bf v} ) \dd \tau \Big ) \Big | \frac{\rho}{r} E_2 (r, {\bf v}) \cdot ( {\bf v} - {\bf u} ) \Big | \dd x \dd s \,.
\end{align*}

Using \eqref{weak-conti-D}, \eqref{weak-mome-D} and \eqref{weak-est} in Definition \ref{def-weak-sol}, and making the density arguments, the weak solution $(\rho, {\bf u})$ belongs to the regularity class
\begin{equation}\label{ws-conti-t}
  \begin{aligned}
    \rho \in C_w ( [0, T]; L^\gamma ) \,, \quad \rho {\bf u} \in C_w ( [0, T]; L^{ \frac{2\gamma}{\gamma+1} } ) \,.
  \end{aligned}
\end{equation}

Choosing $\tilde{s} : = s_n$ with $s_n \rightarrow 0 \; (n \rightarrow \infty)$, and taking advantage of \eqref{ws-weak-energy-ineq} and \eqref{ws-conti-t}, it implies that
\begin{align*}
    \mathcal{E}_{ws} ( \rho, {\bf u}; r, {\bf v} ) ( s_n ) \leq & \int \frac{1}{2} \frac{| {\bf m}_0 |^2}{\rho_0} + {\rm P}(\rho_0) \dd x - \int \rho {\bf u} \cdot {\bf v} \dd x \Big |_{t = s_n} + \int \frac12 \rho | {\bf v} |^2 \dd x \Big |_{t = s_n} \\
    & - \int \rho {\rm P}'(r) \dd x \Big |_{t = s_n} + \int {\rm P}'(r) r - {\rm P}(r) \dd x \Big |_{t = s_n} \\
    \rightarrow & \int \frac{1}{2} \rho_0 \Big | \frac{{\bf m}_0}{\rho_0} - {\bf v}_0 \Big |^2 + {\rm P}( \rho_0 ) - {\rm P}'( r_0 )( \rho_0 - r_0 ) - {\rm P}( r_0 ) \dd x \\
    = : & \mathcal{E}_{ws} ( \rho_0, {\bf m}_0; r_0, {\bf v}_0 ) \,, \textrm{ as } n \rightarrow \infty \,.
\end{align*}

Thus, we complete the proof of Theorem \ref{thm-weak-dissipative}.

\appendix
\section{}\label{Sec:appendix}

In this appendix, we first collect the classical inequality, some properties of $L^p$ space, and two weak convergence results. Then, we add two supplementary lemmas (Lemmas \ref{Korn-type-ineq} and \ref{mollifier-L-infty}) and give the proofs of them. These facts are frequently used in the proof of our main results. We point out that the Lemma \ref{mollifier-L-infty} subjects to the density arguments to deduce \eqref{v-delta}, \eqref{r-delta-infty} and \eqref{p'(r)-div-v-delta} in Section \ref{Sec:regular}.

\begin{lem}[\cite{Brezis-book-2010}, Chapter 9, Poincar\'{e}-Wirtinger's inequality]\label{Poincare-ineq}
Let $\Omega \subset \mathbb{R}^n$ be a bounded domain with a $C^1$ boundary $\partial \Omega$. Then,
\begin{equation*}
  \begin{aligned}
    \Big \| w - \frac{1}{|\Omega|} \int_{\Omega} w \dd x \Big \|_{L^q (\Omega)} \leq C \| \nabla w \|_{L^p (\Omega)} \,, \;\; \forall w \in W^{1,p} \,,
  \end{aligned}
\end{equation*}
where $1 \leq q \leq \frac{pn}{n-p}$ for $n > p$.
\end{lem}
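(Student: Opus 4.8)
The plan is to prove this classical Poincar\'{e}--Wirtinger inequality by a contradiction argument based on the Rellich--Kondrachov compactness theorem, first for the exponent $q=p$ and then extending to the full admissible range $1\le q\le \frac{pn}{n-p}$ by elementary embeddings.

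First I would dispose of the base case $q=p$. Writing $(w)_\Omega=\frac{1}{|\Omega|}\int_\Omega w\,\dd x$, suppose the inequality were false for $q=p$; then there would exist $w_k\in W^{1,p}(\Omega)$ with $\|w_k-(w_k)_\Omega\|_{L^p}>k\,\|\nabla w_k\|_{L^p}$. Setting $v_k:=(w_k-(w_k)_\Omega)/\|w_k-(w_k)_\Omega\|_{L^p}$ produces a sequence obeying $\int_\Omega v_k\,\dd x=0$, $\|v_k\|_{L^p}=1$ and $\|\nabla v_k\|_{L^p}\to0$. Thus $\{v_k\}$ is bounded in $W^{1,p}(\Omega)$, and since $p<\frac{pn}{n-p}$ the Rellich--Kondrachov theorem (available because $\partial\Omega$ is $C^1$) yields a subsequence converging strongly in $L^p$ to some $v$ with $\|v\|_{L^p}=1$ and $\int_\Omega v\,\dd x=0$. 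Passing to the limit in $\mathscr{D}'(\Omega)$ and using $\nabla v_k\to0$ in $L^p$ gives $\nabla v=0$, so $v$ is constant on the connected set $\Omega$; the zero-mean condition then forces $v=0$, contradicting $\|v\|_{L^p}=1$.

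Next I would extend the exponent. For $1\le q\le p$ the boundedness of $\Omega$ gives the continuous inclusion $L^p(\Omega)\hookrightarrow L^q(\Omega)$, whence the $q=p$ estimate already yields $\|w-(w)_\Omega\|_{L^q}\le C\|w-(w)_\Omega\|_{L^p}\le C\|\nabla w\|_{L^p}$. For $p<q\le\frac{pn}{n-p}$, I would apply to $v:=w-(w)_\Omega$ the Sobolev embedding $W^{1,p}(\Omega)\hookrightarrow L^{\frac{pn}{n-p}}(\Omega)$ together with $L^{\frac{pn}{n-p}}(\Omega)\hookrightarrow L^q(\Omega)$, obtaining $\|v\|_{L^q}\le C\big(\|v\|_{L^p}+\|\nabla v\|_{L^p}\big)$, and then absorb $\|v\|_{L^p}$ via the already-proven $q=p$ inequality (note $\nabla v=\nabla w$ and $(v)_\Omega=0$), leaving $\|w-(w)_\Omega\|_{L^q}\le C\|\nabla w\|_{L^p}$.

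The only delicate point is the compactness step of the base case: one must know that $\Omega$ is connected so that a $W^{1,p}$ function with vanishing gradient is genuinely constant, and that the distributional limit identifies $\nabla v$ with the limit of $\nabla v_k$. Both are guaranteed by the $C^1$-boundary (hence connected-domain) hypothesis, so I anticipate no real obstruction; the remaining work is routine embedding bookkeeping. Since the statement coincides with the classical inequality, one may equally well simply invoke \cite{Brezis-book-2010}.
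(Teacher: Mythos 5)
Your argument is correct. The paper offers no proof of this lemma at all---it is quoted directly from \cite{Brezis-book-2010}, Chapter 9, as a known result---so there is nothing in the text to compare against; your compactness--contradiction argument for the base case $q=p$ (normalize, extract a strongly $L^p$-convergent subsequence via Rellich--Kondrachov, identify the limit as a mean-zero constant, conclude it vanishes) followed by the embedding bookkeeping for $1\le q\le p$ and $p<q\le \frac{pn}{n-p}$ is precisely the standard textbook proof and is complete. One small imprecision: the connectedness of $\Omega$, which you need so that $\nabla v=0$ forces $v$ to be constant, is not a consequence of the $C^1$-boundary hypothesis (two disjoint balls have smooth boundary); it follows from the convention that a ``domain'' is a connected open set. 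This does not affect the validity of your argument.
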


\begin{lem}\label{Korn-type-ineq}
Let $\Omega \subset \mathbb{R}^{n}$ be a bounded domain with a $C^1$ boundary $\partial \Omega$, and a non-negative function $\tilde{r}$ satisfies
\begin{equation*}
  \begin{aligned}
    0 < M_0 \leq \int_{\Omega} \tilde{r} \dd x \,, \quad \int_{\Omega} \tilde{r}^\gamma \dd x \leq M_1 \,,
  \end{aligned}
\end{equation*}
for some positive constants $M_0$ and $M_1$, where it assumes $\gamma \geq \frac{2n}{n+2}$ for $n \geq 3$. Then, there exists a constant $\tilde{c}_{\gamma} : = C(n, \gamma, \Omega, M_0, M_1)$ such that
\begin{equation*}
  \begin{aligned}
    \| w \|_{H^1 (\Omega)} \leq \tilde{c}_{\gamma} \big ( \| \nabla w \|_{L^2 (\Omega)} + \| \sqrt{\tilde{r}} w \|_{L^2 (\Omega)} \big ) \,.
  \end{aligned}
\end{equation*}
\end{lem}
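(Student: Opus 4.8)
The plan is to reduce everything to the estimate $\|w\|_{L^2(\Omega)} \le C\big(\|\nabla w\|_{L^2} + \|\sqrt{\tilde r}\,w\|_{L^2}\big)$, since $\|w\|_{H^1}^2 = \|w\|_{L^2}^2 + \|\nabla w\|_{L^2}^2$, so the full $H^1$ bound follows by adding $\|\nabla w\|_{L^2}$ to both sides. To this end I would split $w$ into its mean and oscillation, $w = \bar w + \tilde w$ with $\bar w = \frac{1}{|\Omega|}\int_\Omega w \dd x$ and $\int_\Omega \tilde w \dd x = 0$. The oscillatory part is handled directly by the gradient: Lemma \ref{Poincare-ineq} (with $p=2$, $q=2^\ast=\frac{2n}{n-2}$) together with the inclusion $L^{2^\ast}(\Omega)\hookrightarrow L^2(\Omega)$ on the bounded domain gives $\|\tilde w\|_{L^2}\le C\|\tilde w\|_{L^{2^\ast}}\le C\|\nabla w\|_{L^2}$. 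It then remains only to estimate the single constant $\bar w$.

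To control $\bar w$, I would exploit the two-sided information on $\tilde r$. A Hölder estimate on the finite-measure domain pins the total mass between positive constants, $M_0 \le \int_\Omega \tilde r \dd x \le |\Omega|^{1-1/\gamma} M_1^{1/\gamma}$. Writing $\bar w\int_\Omega \tilde r \dd x = \int_\Omega \tilde r\,w \dd x - \int_\Omega \tilde r\,\tilde w \dd x$ and using $\int_\Omega \tilde r \ge M_0$, I obtain
\[
M_0\,|\bar w| \le \int_\Omega \tilde r\,|w| \dd x + \int_\Omega \tilde r\,|\tilde w| \dd x .
\]
For the first term, Cauchy--Schwarz against the weight gives $\int_\Omega \tilde r\,|w| \dd x \le \big(\int_\Omega \tilde r \dd x\big)^{1/2}\|\sqrt{\tilde r}\,w\|_{L^2}\le C\|\sqrt{\tilde r}\,w\|_{L^2}$. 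For the second term I would apply Hölder with the conjugate exponents $(2^\ast)'$ and $2^\ast$, namely $\int_\Omega \tilde r\,|\tilde w| \dd x \le \|\tilde r\|_{L^{(2^\ast)'}}\|\tilde w\|_{L^{2^\ast}}$.

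The key point, and the only place the hypothesis on $\gamma$ enters, is the identity $(2^\ast)' = \frac{2n}{n+2}$: the assumption $\gamma \ge \frac{2n}{n+2}$ is exactly what guarantees $L^\gamma(\Omega)\hookrightarrow L^{(2^\ast)'}(\Omega)$ on the finite-measure domain, so that $\|\tilde r\|_{L^{(2^\ast)'}}\le C\|\tilde r\|_{L^\gamma}\le C M_1^{1/\gamma}$, while $\|\tilde w\|_{L^{2^\ast}}\le C\|\nabla w\|_{L^2}$ again comes from Lemma \ref{Poincare-ineq}. Assembling the two bounds yields $|\bar w|\le C\big(\|\sqrt{\tilde r}\,w\|_{L^2}+\|\nabla w\|_{L^2}\big)$, and then $\|w\|_{L^2}\le |\Omega|^{1/2}|\bar w| + \|\tilde w\|_{L^2}\le C\big(\|\sqrt{\tilde r}\,w\|_{L^2}+\|\nabla w\|_{L^2}\big)$, which is the claim with $\tilde c_\gamma$ depending only on $n,\gamma,\Omega,M_0,M_1$. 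I expect the main (though mild) obstacle to be the bookkeeping of Sobolev exponents so that the pairing of $\tilde r\in L^\gamma$ against $\tilde w\in L^{2^\ast}$ closes precisely at the threshold $\gamma=\frac{2n}{n+2}$; the $C^1$ boundary is used only to legitimize the Sobolev--Poincaré embedding of Lemma \ref{Poincare-ineq} on $\Omega$, and $\Omega$ should be taken connected so that the mean-zero Poincaré inequality there is applicable.
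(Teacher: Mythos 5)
Your proposal is correct and follows essentially the same route as the paper: reduce to controlling the mean via the weight $\tilde r$ (using $M_0 \le \int_\Omega \tilde r \dd x$ to divide), pair $\tilde r \in L^\gamma \hookrightarrow L^{\frac{2n}{n+2}}$ against $w-\bar w \in L^{\frac{2n}{n-2}}$ via the Poincar\'e--Wirtinger inequality of Lemma \ref{Poincare-ineq}, and use Cauchy--Schwarz for the weighted term $\int_\Omega \tilde r |w| \dd x \le \|\sqrt{\tilde r}\|_{L^2}\|\sqrt{\tilde r}\,w\|_{L^2}$. The only cosmetic difference is that you estimate $|\bar w|$ directly while the paper estimates $\int_\Omega |w|\dd x$; your version is, if anything, slightly cleaner in its pointwise bookkeeping.
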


\begin{rem}
Lemmas \ref{Poincare-ineq} and \ref{Korn-type-ineq} still hold when $\Omega = \mathbb{T}^n$. Lemma \ref{Korn-type-ineq} can be directly deduced by Theorem 11.23 called generalized Korn-Poincar\'{e} inequality in \cite{FN-book-2017}. For reader's convenience, we give a brief proof by means of Lemma \ref{Poincare-ineq} as follows.
\end{rem}

\begin{proof}
By Minkowski's inequality and Poincar\'{e}'s inequality in Lemma \ref{Poincare-ineq}, it follows that
\begin{equation*}
  \begin{aligned}
    \| w \|_{L^2 (\Omega)} \leq \Big \| w - \frac{1}{|\Omega|} \int_{\Omega} w \dd x \Big \|_{L^2 (\Omega)} + \Big \| \frac{1}{|\Omega|} \int_{\Omega} w \dd x \Big \|_{L^2 (\Omega)} \leq C \| \nabla w \|_{L^2 (\Omega)} + |\Omega|^{- \frac12} \| w \|_{L^1 (\Omega)} \,.
  \end{aligned}
\end{equation*}
Then, we know
\begin{equation*}
  \begin{aligned}
    \| w \|_{H^1 (\Omega)} \leq C \| \nabla w \|_{L^2 (\Omega)} + C \int_{\Omega} | w | \dd x \,.
  \end{aligned}
\end{equation*}
Using H\"{o}lder's inequality, one has
\begin{equation*}
  \begin{aligned}
    \int_{\Omega} r \dd x \frac{1}{ |\Omega| } \int_{\Omega} | w | \dd x \leq & \int_{\Omega} r \Big | w - \frac{1}{ |\Omega| } \int w \dd x \Big | \dd x + \int_{\Omega} r | w | \dd x \\
    \leq & \| r \|_{ L^{ \frac{p}{p-1} } (\Omega) } \Big \| w - \frac{1}{ |\Omega| } \int_{\Omega} w \dd x \Big \|_{L^p (\Omega)} + \| \sqrt{r} \|_{L^2 (\Omega)} \| \sqrt{r} w \|_{L^2 (\Omega)} \,.
  \end{aligned}
\end{equation*}
Taking $p = \frac{2n}{n - 2}$, it sees that $\frac{p}{p - 1} = \frac{2n}{n + 2}$. By the Poincar\'{e}-Wirtinger's inequality stated in Lemma \ref{Poincare-ineq} and the restrictions on $\gamma$, the conclusion follows.
\end{proof}

\begin{lem}[\cite{ChenShX-book-2018}, Lemma 3.1 in Chapter 4]\label{Lp-limit-L-infty}
Let $\Omega \subset \mathbb{R}^n$ be a bounded domain. If $w \in L^p (\Omega)$ for $1 \leq p < \infty$, and $\| w \|_{ L^p (\Omega) } \leq M < \infty$, where the constant $M$ is independent of $p$. Then, $w \in L^\infty (\Omega)$, and as $p \rightarrow \infty$,
\begin{equation*}
  \begin{aligned}
    \| w \|_{L^p (\Omega)} \rightarrow \| w \|_{L^\infty (\Omega)} \,.
  \end{aligned}
\end{equation*}

\end{lem}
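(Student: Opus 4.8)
The plan is to prove the statement in two stages: first upgrade the uniform bound to membership in $L^\infty$ with $\|w\|_{L^\infty(\Omega)} \le M$, and then establish the convergence $\|w\|_{L^p(\Omega)} \to \|w\|_{L^\infty(\Omega)}$. Throughout I would use that $\Omega$ is a bounded domain, so $|\Omega| < \infty$; this finiteness of the measure is what makes $|\Omega|^{1/p} \to 1$ as $p \to \infty$ and is therefore essential to the argument.

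For the first stage I would argue by contradiction. Suppose $\|w\|_{L^\infty(\Omega)} > M$. Then one may fix a number $\alpha$ with $M < \alpha < \|w\|_{L^\infty(\Omega)}$, and by the definition of the essential supremum the superlevel set $E_\alpha := \{x \in \Omega : |w(x)| > \alpha\}$ has positive measure, say $|E_\alpha| = \delta > 0$. Restricting the integral to $E_\alpha$ gives $\|w\|_{L^p(\Omega)}^p = \int_\Omega |w|^p \dd x \ge \int_{E_\alpha} |w|^p \dd x \ge \alpha^p \delta$, hence $\|w\|_{L^p(\Omega)} \ge \alpha\, \delta^{1/p}$. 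Letting $p \to \infty$ and using $\delta^{1/p} \to 1$ yields $\liminf_{p\to\infty} \|w\|_{L^p(\Omega)} \ge \alpha > M$, contradicting the hypothesis $\|w\|_{L^p(\Omega)} \le M$ for every finite $p$. Thus $\|w\|_{L^\infty(\Omega)} \le M < \infty$ and $w \in L^\infty(\Omega)$.

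For the second stage, set $L := \|w\|_{L^\infty(\Omega)}$; the case $L = 0$ forces $w = 0$ almost everywhere and the claim is trivial, so assume $L > 0$. The upper estimate is immediate: $\|w\|_{L^p(\Omega)}^p \le L^p |\Omega|$, so $\|w\|_{L^p(\Omega)} \le L\,|\Omega|^{1/p}$ and $\limsup_{p\to\infty} \|w\|_{L^p(\Omega)} \le L$. For the lower estimate, fix $\epsilon \in (0, L)$; the set $\{x : |w(x)| > L - \epsilon\}$ has positive measure $\delta_\epsilon > 0$ by definition of the essential supremum, and the same restriction argument as above gives $\|w\|_{L^p(\Omega)} \ge (L - \epsilon)\,\delta_\epsilon^{1/p} \to L - \epsilon$, whence $\liminf_{p\to\infty} \|w\|_{L^p(\Omega)} \ge L - \epsilon$. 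Since $\epsilon$ is arbitrary, $\liminf \ge L$, and combining with the $\limsup$ bound gives $\lim_{p\to\infty} \|w\|_{L^p(\Omega)} = L = \|w\|_{L^\infty(\Omega)}$.

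I do not expect a genuine obstacle here: both stages rest on the elementary truncation inequality $\int_\Omega |w|^p \dd x \ge \alpha^p\, |\{|w|>\alpha\}|$ together with $\delta^{1/p} \to 1$. The only points requiring care are (i) the finiteness of $|\Omega|$, supplied by the bounded-domain hypothesis and used for the $\limsup$ bound, and (ii) the fact that it is the \emph{uniformity} of the bound $M$ in $p$, not merely the finiteness of each individual norm $\|w\|_{L^p(\Omega)}$, that forces $w \in L^\infty(\Omega)$; dropping this uniformity makes the conclusion false.
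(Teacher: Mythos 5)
Your proof is correct and complete: the truncation inequality $\int_\Omega |w|^p\,{\rm d}x \ge \alpha^p\,|\{|w|>\alpha\}|$ together with $\delta^{1/p}\to 1$ and $|\Omega|^{1/p}\to 1$ gives both the membership $w\in L^\infty(\Omega)$ with $\|w\|_{L^\infty}\le M$ and the convergence of the norms, and you correctly identify the two hypotheses (finite measure, uniformity of $M$ in $p$) that the argument actually uses. Note that the paper itself does not prove this statement --- it is quoted without proof from the cited reference \cite{ChenShX-book-2018} --- so there is no in-paper argument to compare against; yours is the standard one and would serve as a self-contained substitute for the citation.
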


\begin{lem}[\cite{Brezis-book-2010}, Lemma 4.3 in Chapter 4]\label{local-unif-conti-Lp}
Let $\Omega \subset \mathbb{R}^n$ be an open set, $K \subset\subset \Omega$, $K = \Omega$ if $\Omega = \mathbb{T}^n$ or $\mathbb{R}^n$. Then, for $w \in L^p (\Omega)$, $1 \leq p < \infty$, as $\xi \rightarrow 0$,
\begin{equation*}
  \begin{aligned}
    \| w (x+\xi) - w (x) \|_{L^p ( K )} \rightarrow 0 \,.
  \end{aligned}
\end{equation*}
\end{lem}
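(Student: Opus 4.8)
The plan is to prove the statement first for continuous, compactly supported functions, where uniform continuity gives it directly, and then to pass to a general $w \in L^p(\Omega)$ by a density-plus-triangle-inequality ($3\varepsilon$) argument, using the translation invariance of Lebesgue measure. The only structural hypotheses are that $1 \le p < \infty$ (so that $C_c(\Omega)$ is dense in $L^p(\Omega)$) and that the translate of $K$ stays inside $\Omega$ for small shifts (guaranteed by $K \subset\subset \Omega$, or trivially when $\Omega = \mathbb{T}^n$ or $\mathbb{R}^n$ with $K = \Omega$).

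First I would establish the claim for $g \in C_c(\Omega)$. Extending $g$ by zero gives a uniformly continuous function on $\mathbb{R}^n$, so $\sup_{x} |g(x+\xi) - g(x)| \to 0$ as $\xi \to 0$. Since $K \subset\subset \Omega$, there is a $\delta_0 > 0$ with $K + \overline{B_{\delta_0}} \subset \Omega$ (for $\Omega = \mathbb{T}^n$ or $\mathbb{R}^n$ no such restriction is needed and one takes $K = \Omega$), so for $|\xi| \le \delta_0$ the translate $g(\cdot + \xi)$ is defined on $K$ and agrees with the extended function there. Because $|K| < \infty$, raising to the $p$-th power and integrating over $K$ yields $\| g(\cdot+\xi) - g \|_{L^p(K)} \le |K|^{1/p} \sup_x |g(x+\xi) - g(x)| \to 0$.

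Next I would use the density of $C_c(\Omega)$ in $L^p(\Omega)$ for $1 \le p < \infty$: given $\varepsilon > 0$, pick $g \in C_c(\Omega)$ with $\| w - g \|_{L^p(\Omega)} < \varepsilon$. Splitting via the triangle inequality,
\[
\| w(\cdot+\xi) - w \|_{L^p(K)} \le \| (w-g)(\cdot+\xi) \|_{L^p(K)} + \| g(\cdot+\xi) - g \|_{L^p(K)} + \| g - w \|_{L^p(K)},
\]
the last summand is below $\varepsilon$, the middle summand tends to $0$ by the continuous case above, and the first summand is controlled by translation invariance of the Lebesgue measure: for $|\xi| \le \delta_0$ one has $K + \xi \subset \Omega$, hence $\| (w-g)(\cdot+\xi) \|_{L^p(K)} = \| w - g \|_{L^p(K+\xi)} \le \| w - g \|_{L^p(\Omega)} < \varepsilon$. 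Taking $\limsup_{\xi \to 0}$ gives a bound of $2\varepsilon$, and letting $\varepsilon \to 0$ concludes.

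The main technical point to watch is the domain bookkeeping: one must ensure the translate stays inside $\Omega$ so that the integrals make sense and so that translation invariance applies with no boundary loss. This is precisely where the hypotheses $K \subset\subset \Omega$ (for general open $\Omega$) or $K = \Omega$ with $\Omega \in \{\mathbb{T}^n, \mathbb{R}^n\}$ enter; in the former case one simply restricts to $|\xi|$ smaller than the distance from $K$ to $\partial\Omega$. The restriction $p < \infty$ is essential only through the density of $C_c(\Omega)$ in $L^p(\Omega)$, which fails at $p = \infty$, where the conclusion itself is false.
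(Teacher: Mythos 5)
Your proposal is correct, and it is essentially the proof of the cited result: the paper itself gives no argument for this lemma, referring instead to Brezis (Lemma 4.3, Chapter 4), whose proof is exactly your density-plus-$3\varepsilon$ scheme --- approximate by $g \in C_c$, use uniform continuity of $g$, and control the error terms by translation invariance of Lebesgue measure. One small slip to fix: in the case $K = \Omega = \mathbb{R}^n$ your step ``because $|K| < \infty$'' fails, since then $|K| = \infty$; the repair is immediate, because for $|\xi| \le 1$ the difference $g(\cdot + \xi) - g$ is supported in the compact set $\mathrm{supp}\, g + \overline{B_1}$, so the $L^p$ norm is still bounded by (a fixed finite measure)$^{1/p}$ times $\sup_x |g(x+\xi) - g(x)|$. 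With that one-line adjustment the argument is complete in all three cases ($K \subset\subset \Omega$ open, $\Omega = \mathbb{T}^n$, $\Omega = \mathbb{R}^n$), and your closing remarks on the role of $K \subset\subset \Omega$ and the failure at $p = \infty$ are accurate.
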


\begin{lem}[\cite{Brezis-book-2010}, Theorem 4.15 in Chapter 4]\label{convolution}
Let $f \in L^1 ( \mathbb{R}^n )$ and $g \in L^p ( \mathbb{R}^n )$ with $1 \leq p \leq \infty$. Define
\begin{align*}
    ( f \ast g ) (x) = \int_{\mathbb{R}^n} f (x-y) g (y) \dd y \,.
\end{align*}
Then,
\begin{equation*}
  \begin{aligned}
    \| f \ast g \|_{L^p ( \mathbb{R}^n )} \leq \| f \|_{L^1 (\mathbb{R}^n)} \| g \|_{L^p (\mathbb{R}^n)} \,.
  \end{aligned}
\end{equation*}
\end{lem}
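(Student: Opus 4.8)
The plan is to establish the bound first as a pointwise almost-everywhere estimate on the majorant $\int_{\mathbb{R}^n} |f(x-y)||g(y)| \dd y$ and only afterwards pass to $f \ast g$ itself, splitting the argument according to whether $p = \infty$, $p = 1$, or $1 < p < \infty$. The two endpoint cases are immediate. For $p = \infty$ I would bound $|(f \ast g)(x)| \leq \int_{\mathbb{R}^n} |f(x-y)| |g(y)| \dd y \leq \|g\|_{L^\infty} \int_{\mathbb{R}^n} |f(x-y)| \dd y = \|f\|_{L^1} \|g\|_{L^\infty}$, using the translation invariance of Lebesgue measure. For $p = 1$, Tonelli's theorem gives $\int_{\mathbb{R}^n} |(f \ast g)(x)| \dd x \leq \int_{\mathbb{R}^n}\int_{\mathbb{R}^n} |f(x-y)| |g(y)| \dd y \dd x = \int_{\mathbb{R}^n} |g(y)| \left( \int_{\mathbb{R}^n} |f(x-y)| \dd x \right) \dd y = \|f\|_{L^1} \|g\|_{L^1}$.

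For the main range $1 < p < \infty$ the key device is a H\"older split of the kernel against its conjugate exponent $p' = \frac{p}{p-1}$. I would write $|f(x-y)| = |f(x-y)|^{1/p'} \cdot |f(x-y)|^{1/p}$ inside the integrand and apply H\"older's inequality in the variable $y$ to obtain the pointwise bound $|(f \ast g)(x)| \leq \|f\|_{L^1}^{1/p'} \left( \int_{\mathbb{R}^n} |f(x-y)| |g(y)|^p \dd y \right)^{1/p}$. Raising both sides to the $p$-th power, integrating in $x$, and invoking Tonelli to exchange the order of integration in the resulting double integral yields $\|f \ast g\|_{L^p}^p \leq \|f\|_{L^1}^{p/p'} \|f\|_{L^1} \|g\|_{L^p}^p$. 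The estimate closes once I record the exponent identity $\frac{p}{p'} + 1 = p$, so the right-hand side is exactly $\|f\|_{L^1}^p \|g\|_{L^p}^p$, and taking $p$-th roots gives the claim.

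The only genuinely delicate points are measurability and the justification of Tonelli, which I would dispatch at the outset. Since $f$ and $g$ are measurable, the map $(x,y) \mapsto f(x-y) g(y)$ is jointly measurable, so the majorant $\int_{\mathbb{R}^n} |f(x-y)||g(y)| \dd y$ is a well-defined nonnegative measurable function of $x$; the $L^p$ bound just derived shows it is finite for a.e.\ $x$, whence the convolution integral converges absolutely a.e.\ and $|f \ast g|$ is dominated by that majorant. Because every interchange of integration is applied to nonnegative integrands, Tonelli applies without any a priori integrability hypothesis, and the translation invariance $\int_{\mathbb{R}^n} |f(x-y)| \dd x = \|f\|_{L^1}$ is the single structural fact used in all three cases. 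No obstacle beyond this bookkeeping arises, as the statement is the classical Young convolution inequality in its $L^1 \ast L^p \to L^p$ form.
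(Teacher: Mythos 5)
Your proof is correct and follows the classical route: the paper itself gives no proof of this lemma, citing it directly from Brezis (Theorem 4.15 in Chapter 4), and your argument is precisely the proof given there --- the case $p=1$ by Tonelli, $p=\infty$ trivially from translation invariance, and $1<p<\infty$ via the H\"older split $|f(x-y)| = |f(x-y)|^{1/p'}\,|f(x-y)|^{1/p}$ combined with the exponent identity $\frac{p}{p'}+1=p$. Your preliminary handling of joint measurability of $(x,y)\mapsto f(x-y)g(y)$ and the unconditional applicability of Tonelli to the nonnegative majorant is also exactly the right bookkeeping, so nothing is missing.
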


The following three lemmas are about mollifier. Let $w$ be a locally integrable function. Recall the definitions of mollifiers $\eta (x)$ in \eqref{mollifier1} and $\tilde{\eta} (t)$ in \eqref{mollifier2}. We use $w_x^\delta$ to denote the mollification of $w$ with respect to $x$, and $w_{t,x}^\delta$ to denote the mollification of $w$ with respect to both $x$ and $t$, that is,
\begin{equation*}
  \begin{aligned}
    & w_x^\delta (t, x) = ( w \ast \eta^\delta ) (t, x) = \int_{ \mathbb{R}^n } w (t, x - y) \eta^\delta (y) \dd y \,, \\
    & w_{t,x}^\delta (t, x) = ( w^\delta \ast \tilde{\eta}^\delta ) (t, x) = \int_{ \mathbb{R} } \int_{ \mathbb{R}^n } w (s, x - y) \eta^\delta (y) \dd y \tilde{\eta}^\delta (t-s) \dd s \,.
  \end{aligned}
\end{equation*}

\begin{lem}\label{mollifier-derivative}
Let $w \in W^{k,p}$ with $1 \leq p < \infty$, then, for $|\alpha| \leq k$,
\begin{align*}
    D^\alpha w_x^\delta = (D^\alpha w)_x^\delta \,.
\end{align*}
\end{lem}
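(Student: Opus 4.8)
The plan is to prove the identity first in the special case $|\alpha| = 1$ and then bootstrap to arbitrary $|\alpha| \le k$ by induction on the order of the multi-index. Since the mollification $w_x^\delta$ is taken only in the space variable, I treat $t$ as an inert parameter and regard $w(t, \cdot) \in W^{k,p}$ as a function of $x$ alone; all derivatives $D^\alpha$ are spatial. First I would rewrite the mollification in the equivalent convolution form $w_x^\delta(x) = \int_{\mathbb{R}^n} w(y) \eta^\delta(x - y) \dd y$ and observe that, because $\eta^\delta \in C_c^\infty(\mathbb{R}^n)$, for $x$ ranging over a fixed compact set the integrand and its $x_i$-derivative $w(y)(\partial_{x_i}\eta^\delta)(x-y)$ are dominated by $|w|\,\|\partial_{x_i}\eta^\delta\|_{L^\infty}$ times the indicator of a fixed compact $y$-set. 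As $w \in L^p_{\rm loc} \subset L^1_{\rm loc}$, this dominating function is integrable, so differentiation under the integral sign is legitimate and gives
\begin{equation*}
  \partial_{x_i} w_x^\delta = w \ast \partial_{x_i}\eta^\delta \,.
\end{equation*}

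The second step transfers the derivative from $\eta^\delta$ onto $w$ via the definition of the weak derivative. For each fixed $x$, the function $y \mapsto \eta^\delta(x - y)$ belongs to $C_c^\infty(\mathbb{R}^n)$ and is therefore an admissible test function. Using the chain rule $\partial_{y_i}[\eta^\delta(x - y)] = -(\partial_{x_i}\eta^\delta)(x - y)$ in the defining identity $\int w\,\partial_{y_i}\phi \dd y = -\int (\partial_{x_i} w)\,\phi \dd y$, the two minus signs cancel and one obtains $(w \ast \partial_{x_i}\eta^\delta)(x) = ((\partial_{x_i} w) \ast \eta^\delta)(x) = (\partial_{x_i} w)_x^\delta(x)$. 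Combining this with the first step yields $\partial_{x_i} w_x^\delta = (\partial_{x_i} w)_x^\delta$, which is the claim for $|\alpha| = 1$.

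For the induction, given $|\alpha| \ge 2$ with $|\alpha| \le k$, I would factor $D^\alpha = \partial_{x_i} D^\beta$ with $|\beta| = |\alpha| - 1$. The inductive hypothesis gives $D^\beta w_x^\delta = (D^\beta w)_x^\delta$, and since $|\alpha| \le k$ the function $D^\beta w$ lies in $W^{1,p}$, so the already-established first-order case applied to $D^\beta w$ gives $\partial_{x_i}(D^\beta w)_x^\delta = (\partial_{x_i} D^\beta w)_x^\delta = (D^\alpha w)_x^\delta$. Chaining these equalities produces $D^\alpha w_x^\delta = (D^\alpha w)_x^\delta$ and closes the induction.

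The only genuinely delicate points are the justification of exchanging differentiation with integration and the admissibility of $\eta^\delta(x - \cdot)$ as a test function; both reduce to the smoothness and compact support of $\eta^\delta$ together with the local integrability of $w$, so I expect no serious obstacle. I would only add the caveat that on a bounded domain $\Omega$, rather than on $\mathbb{R}^n$, the identity holds at points with ${\rm dist}(x, \partial\Omega) > \delta$; this is precisely the regime used in Section \ref{Sec:regular}, where $r$ and ${\bf v}$ are first extended to $\widetilde{\Omega}$ with $\Omega \subset\subset \widetilde{\Omega}$ before mollifying.
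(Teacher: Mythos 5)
Your argument is correct and is essentially the standard one: differentiate under the integral sign using the smoothness and compact support of $\eta^\delta$, transfer the derivative to $w$ via the definition of the weak derivative with test function $\eta^\delta(x-\cdot)$, and induct on $|\alpha|$. This is exactly the proof in Evans (Section 5.3, Theorem 1) that the paper cites in lieu of giving its own argument, and your closing caveat about bounded domains correctly identifies why the paper extends $r$ and ${\bf v}$ to $\widetilde{\Omega}$ before mollifying.
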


\begin{rem}
This lemma is one conclusion during the proof of Theorem 1 in Section 5.3 of \cite{Evans-book-2010}, which means that the $\alpha^{th}$ order partial derivative of the smooth function $w_x^\delta$ is the mollification of the $\alpha^{th}$ order weak partial derivative of $w$. Similarly, the regularizing in $t$ has the same property. We directly write it as follows, if $\partial_t w \in L^q (0, T; L^p)$, $1 \leq p, q < \infty$, then $\partial_t w_{t,x}^\delta = (\partial_t w)_{t,x}^\delta$.
\end{rem}

\begin{lem}[\cite{Brezis-book-2010}, Theorem 4.22 in Chapter 4]\label{mollifier-Lp}
Let $\Omega \subset \mathbb{R}^n$ be an open set, $K \subset\subset \Omega$, $K = \Omega$ if $\Omega = \mathbb{T}^n$ or $\mathbb{R}^n$. Then, for $w \in L^p (\Omega)$, $1 \leq p < \infty$,
\begin{equation*}
  \begin{aligned}
    \| w_x^\delta - w \|_{L^p(K)} \rightarrow 0 \, \textrm{ as } \delta \rightarrow 0 \,.
  \end{aligned}
\end{equation*}
\end{lem}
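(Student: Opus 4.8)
The plan is to reduce the statement to the continuity of translation in $L^p$, i.e.\ Lemma \ref{local-unif-conti-Lp}, exploiting that the mollifier preserves mass, $\int_{\mathbb{R}^n}\eta^\delta\,\dd y=1$. First I would rescale the convolution by the change of variables $y=\delta z$, writing, for $x\in K$,
\[
  w_x^\delta(x)-w(x)=\int_{\mathbb{R}^n}\bigl(w(x-\delta z)-w(x)\bigr)\eta(z)\,\dd z=\int_{|z|\le 1}\bigl(w(x-\delta z)-w(x)\bigr)\eta(z)\,\dd z,
\]
where the last equality uses $\mathrm{supp}\,\eta\subset\{|z|\le 1\}$ and $\int_{\mathbb{R}^n}\eta\,\dd z=1$. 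Since $K\subset\subset\Omega$, I would restrict to $0<\delta<\mathrm{dist}(K,\partial\Omega)$ so that every sampled point $x-\delta z$ with $x\in K$, $|z|\le1$ stays inside $\Omega$; in the periodic or whole-space case $K=\Omega$ and no such restriction is needed.

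Next I would apply Minkowski's integral inequality to move the $L^p(K)$ norm inside the $z$-integral:
\[
  \|w_x^\delta-w\|_{L^p(K)}\le\int_{|z|\le 1}\|w(\cdot-\delta z)-w\|_{L^p(K)}\,|\eta(z)|\,\dd z.
\]
For each fixed $z$ with $|z|\le 1$, Lemma \ref{local-unif-conti-Lp} applied with the shift $\xi=-\delta z\to0$ gives $\|w(\cdot-\delta z)-w\|_{L^p(K)}\to0$ as $\delta\to0$. Moreover, enlarging $K$ to a compact $K'\subset\subset\Omega$ containing all translates $\{x-\delta z:x\in K,\ |z|\le1\}$ for small $\delta$, the integrand is dominated by $\bigl(\|w\|_{L^p(K')}+\|w\|_{L^p(K)}\bigr)|\eta(z)|\le 2\|w\|_{L^p(\Omega)}|\eta(z)|$, which is integrable and independent of $\delta$. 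The dominated convergence theorem then forces the right-hand side to $0$, which proves the claim.

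The only genuinely delicate point is the boundary bookkeeping: one must confirm that for small $\delta$ the mollification on $K$ samples $w$ only at points of $\Omega$ and that the translated norms are uniformly controlled, which is exactly what the choice $\delta<\mathrm{dist}(K,\partial\Omega)$ and the enlarged set $K'$ provide. An alternative route, should one prefer to avoid translation continuity, is a $3\varepsilon$ density argument: approximate $w$ in $L^p(\Omega)$ by $g\in C_c(\Omega)$, handle $g_x^\delta-g$ by the uniform continuity of $g$, and control $(w-g)_x^\delta$ uniformly in $\delta$ through Young's inequality (Lemma \ref{convolution}) together with $\|\eta^\delta\|_{L^1}=1$; I expect the translation-continuity argument above to be the shorter of the two.
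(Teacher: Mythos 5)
Your proof is correct, but note that the paper itself contains no proof of this lemma: it is quoted directly from Brezis (Theorem 4.22 in Chapter 4), and the proof in that reference is precisely the $3\varepsilon$ density argument you sketch at the end as an alternative (approximate $w$ in $L^p$ by $g\in C_c$, treat $g_x^\delta-g$ by uniform continuity, and absorb the two error terms via Young's inequality $\|\eta^\delta\ast h\|_{L^p}\le\|\eta^\delta\|_{L^1}\|h\|_{L^p}=\|h\|_{L^p}$, i.e.\ Lemma \ref{convolution}). Your primary route --- rescaling to $\int_{|z|\le 1}\bigl(w(x-\delta z)-w(x)\bigr)\eta(z)\,\dd z$ using $\int\eta=1$, then Minkowski's integral inequality, pointwise-in-$z$ convergence from Lemma \ref{local-unif-conti-Lp}, and dominated convergence --- is a genuinely different and equally valid derivation, with the virtue of resting only on statements already recorded in the paper's appendix; logically the two routes are close cousins, since translation continuity (Lemma 4.3 in Brezis) is itself proved by the same density trick, so neither is more elementary in an absolute sense, but yours localizes cleanly to $K\subset\subset\Omega$ without needing to extend $w$ by zero. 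Your boundary bookkeeping (restricting to $\delta<\mathrm{dist}(K,\partial\Omega)$, enlarging to a compact $K'$ containing all sampled translates, and dominating by $2\|w\|_{L^p(\Omega)}|\eta(z)|$) is exactly the delicate point and you handle it correctly; it also mirrors how the paper actually deploys the lemma in Section \ref{Sec:regular}, where $r$ and ${\bf v}$ are first extended to $\widetilde{\Omega}\supset\supset\Omega$ so that mollification on $\Omega$ samples only points of $\widetilde{\Omega}$. One pedantic remark: since the paper's $\eta$ is not assumed nonnegative, the absolute value $|\eta(z)|$ in the Minkowski step and in the dominating function is needed, and you correctly included it.
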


\begin{lem}\label{mollifier-L-infty}
Let $\Omega \subset \mathbb{R}^n$ be a bounded domain, $K \subset\subset \Omega$, $K = \Omega$ if $\Omega = \mathbb{T}^n$, and $(t_1, t_2) \subset\subset (0, T)$ with any $T \in (0, \infty)$. Then \\
1. For $\partial_t w \in L^1 (0, T; L^1 (\Omega))$, $\nabla w \in L^\infty(0, T; L^1 (\Omega))$, we have
\begin{equation*}
  \begin{aligned}
    \| w_{t,x}^\delta - w \|_{ L^\infty (t_1, t_2; L^\infty (K)) } \rightarrow 0 \, \textrm{ as } \delta \rightarrow 0 \,.
  \end{aligned}
\end{equation*}
2. For $\partial_t w \in L^1 (0, T; L^1 (\Omega))$, $w \in L^\infty(0, T; L^p (\Omega))$ with $1 \leq p < \infty$, we have
\begin{equation*}
  \begin{aligned}
    \| w_{t,x}^\delta - w \|_{ L^\infty (t_1, t_2; L^p (K)) } \rightarrow 0 \, \textrm{ as } \delta \rightarrow 0 \,.
  \end{aligned}
\end{equation*}
\end{lem}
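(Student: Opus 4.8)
The plan is to reduce the space--time mollification error to elementary shift estimates, to bound time shifts by the hypothesis on $\partial_t w$ and space shifts by the continuity of translations, and then to upgrade the resulting pointwise-in-$t$ convergence to the uniform-in-$t$ convergence demanded by the $L^\infty(t_1,t_2;\,\cdot\,)$ norm; the last step is the only genuinely delicate one.

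First I would rescale the kernels. Since $\int_{\mathbb{R}^n}\eta^\delta\dd y=1$ and $\int_{\mathbb{R}}\tilde\eta^\delta\dd s=1$, the substitution $y=\delta z$, $s=\delta\sigma$ turns the error into
\begin{equation}\label{w-delta}
  w_{t,x}^\delta(t,x)-w(t,x)=\int_{\mathbb{R}}\int_{\mathbb{R}^n}\big[w(t-\delta\sigma,x-\delta z)-w(t,x)\big]\eta(z)\tilde\eta(\sigma)\dd z\dd\sigma,
\end{equation}
in which $\delta$ appears only inside the arguments while the fixed kernels $\eta,\tilde\eta$ have unit mass and unit-ball support. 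The algebraic heart of the argument is the telescoping split
\begin{equation*}
  w(t-\delta\sigma,x-\delta z)-w(t,x)=\big[w(t-\delta\sigma,x-\delta z)-w(t,x-\delta z)\big]+\big[w(t,x-\delta z)-w(t,x)\big],
\end{equation*}
which separates a pure time shift from a pure space shift and lets me treat the temporal and spatial regularizations one at a time.

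For the time-shift term I would use the fundamental theorem of calculus, $w(t-\delta\sigma,\cdot)-w(t,\cdot)=-\int_{t-\delta\sigma}^{t}\partial_\tau w(\tau,\cdot)\dd\tau$, together with Minkowski's inequality to obtain
\begin{equation}\label{I}
  \big\|w(t-\delta\sigma,\cdot)-w(t,\cdot)\big\|\le\int_{t-\delta\sigma}^{t}\big\|\partial_\tau w(\tau,\cdot)\big\|\dd\tau,\qquad |\sigma|\le1,
\end{equation}
so that $\partial_t w\in L^1(0,T;L^1)$ (measured in the spatial norm relevant to the conclusion) and the absolute continuity of the Lebesgue integral force this contribution to zero as $\delta\to0$, with an $O(\delta)$ rate exactly as in the treatment of $I_3$ in Section \ref{Sec:regular}. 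For the space-shift term, in Part~2 I would invoke continuity of translations (Lemma \ref{local-unif-conti-Lp}), giving $\|w(t,\cdot-\delta z)-w(t,\cdot)\|_{L^p(K)}\to0$; in Part~1 I would instead write $w(t,x-\delta z)-w(t,x)=-\int_0^1\delta z\cdot\nabla w(t,x-\theta\delta z)\dd\theta$ and extract a factor $\delta$ from the gradient bound.

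The main obstacle is making these two estimates uniform in $t$, because both Lemma \ref{local-unif-conti-Lp} and the plain convergence Lemma \ref{mollifier-Lp} are stated for fixed time. This is precisely where $\partial_t w$ enters structurally: the hypothesis $\partial_t w\in L^1(0,T;L^1)$ makes $t\mapsto w(t,\cdot)$ absolutely continuous into the relevant Banach space $X$, hence an element of $C([0,T];X)$, so that its track $\{w(t,\cdot):t\in[t_1,t_2]\}$ is a \emph{compact} subset of $X$. On a compact set the modulus of continuity of translation, and of mollification, is uniform, which upgrades the pointwise-in-$t$ smallness of both shift terms in \eqref{w-delta} to a bound uniform over $[t_1,t_2]$; combining these and letting $\delta\to0$ yields both assertions. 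I expect the delicate point to be matching the integrability exponent at the endpoint, i.e.\ reaching $L^p(K)$ in Part~2 rather than merely every $L^q$ with $q<p$ (which interpolation alone would give), and this rests on the compactness of the time track together with the uniform $L^\infty(0,T;L^p)$ bound; Part~1 differs only in that its spatial shift is measured in the sup norm and is therefore controlled through the gradient hypothesis rather than through translation continuity.
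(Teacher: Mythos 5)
Your architecture coincides with the paper's: rescale the kernels, telescope the error into a pure time shift plus a pure space shift, control the time shift through $\partial_t w$ by the fundamental theorem of calculus, and control the space shift by translation continuity (Part~2) or by the gradient (Part~1). The genuine gap is a norm mismatch in both shift estimates. Your time-shift bound $\|w(t-\delta\sigma,\cdot)-w(t,\cdot)\|_X\le\int_{t-\delta\sigma}^t\|\partial_\tau w(\tau,\cdot)\|_X\dd\tau$ ``in the spatial norm relevant to the conclusion'' needs $\partial_t w\in L^1(0,T;X)$ with $X=L^\infty(\Omega)$ in Part~1 and $X=L^p(\Omega)$ in Part~2, while the hypothesis only supplies $X=L^1(\Omega)$; similarly, in Part~1 the increment $|w(t,x-\delta z)-w(t,x)|\le\delta\int_0^1|z|\,|\nabla w(t,x-\theta\delta z)|\dd\theta$ cannot be measured in $L^\infty_x$ using only $\nabla w\in L^\infty(0,T;L^1)$: ``extracting a factor $\delta$'' leaves an $L^1_x$ object, and for $n\ge 2$ a time-independent $w=g\in W^{1,1}\setminus L^\infty_{\mathrm{loc}}$ (e.g.\ $g=|x|^{-1}$ for $n=3$) shows that no such bound can exist. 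The only mechanism for upgrading $L^1_x$ data to $L^\infty_x$ or $L^p_x$ here is the convolution kernel itself, and that costs the Jacobian $\delta^{-(n+1)}$ of the substitution $y=\delta y'$, $s=\delta s'$, which consumes the factor $\delta$ you gained. Be aware that the paper's own displays \eqref{w-delta}--\eqref{I} silently drop exactly this Jacobian in the underbraced changes of variables, so following the printed argument would not repair the gap; what actually closes the estimates is the stronger spatial integrability of $\partial_t w$ and $\nabla w$ available in every application of the lemma in Section~\ref{Sec:regular}.

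You are right, and ahead of the paper, in identifying uniformity in $t$ of the translation-continuity step as the delicate point: the paper takes $\sup_{0\le t\le T}$ of the fixed-$t$ statement of Lemma~\ref{local-unif-conti-Lp} without justification. But your proposed repair does not reach the endpoint. From $\partial_t w\in L^1(0,T;L^1)$ the map $t\mapsto w(t,\cdot)$ is absolutely continuous into $L^1(\Omega)$ only, so the track is compact in $L^1$; interpolating against $w\in L^\infty(0,T;L^p)$ gives a compact track, hence a uniform translation modulus, in $L^q$ only for $q<p$, exactly as you fear. The endpoint is genuinely out of reach of the stated hypotheses: for $p>1$ the concentrating profile $w(t,x)=|t-t_0|^{-n/p}\phi\bigl(x/|t-t_0|\bigr)$ with $\phi\in C_c^\infty$ lies in $L^\infty(0,T;L^p)$ with $\partial_t w\in L^1(0,T;L^1)$, yet $\|w^\delta_{t,x}-w\|_{L^\infty(t_1,t_2;L^p(K))}$ stays bounded below, since for $|t-t_0|\ll\delta$ the mollification flattens the bump to height $O(\delta^{-n/p})$ while $w(t,\cdot)$ retains full $L^p$ mass on $|x|\le|t-t_0|$. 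Under the hypothesis $\partial_t w\in L^1(0,T;L^p)$ one gets $w\in C([0,T];L^p)$, the track is compact in $L^p$, and your compactness argument then works verbatim; this stronger hypothesis is what actually holds where the lemma is invoked (e.g.\ $\partial_t{\bf v}\in L^1(0,T;L^{2\gamma/(\gamma-1)})$ for \eqref{v-delta}). In short: right decomposition and right diagnosis of the hard point, but both the shift estimates and the endpoint upgrade require more than the lemma states.
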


\begin{proof}
1. By the definition of mollifiers in \eqref{mollifier1} and \eqref{mollifier2}, and with some direct computations, it has
\begin{align}\label{w-delta}
    & w_{t,x}^\delta (t, x) - w (t, x) = \int_{ \mathbb{R} } \int_{ \mathbb{R}^n } [ w (t - s, x - y) - w (t, x) ] \eta^\delta (y) \dd y \tilde{\eta}^\delta (s) \dd s \nonumber\\
    = & \int_{ \mathbb{R} } \int_{ \mathbb{R}^n } [ w (t - s, x - y) - w (t, x - y) ] \eta^\delta (y) \dd y \tilde{\eta}^\delta (s) \dd s \nonumber\\
    & + \int_{ \mathbb{R}^n }  [ w (t, x - y) - w (t, x) ] \eta^\delta (y) \dd y \nonumber\\
    = & \underbrace{ \int_{ \mathbb{R} } \int_{ \mathbb{R}^n } [ w (t - \delta s, x - \delta y) - w (t, x - \delta y) ] \eta (y) \dd y \tilde{\eta} (s) \dd s }_{ y := y' = \delta y \,, \;\; s := s' = \delta s } \nonumber\\
    & + \underbrace{ \int_{ \mathbb{R}^n }  [ w (t, x - \delta y) - w (t, x) ]  \eta (y) \dd y }_{y : = y' = \delta y} \nonumber\\
    = & \int_{ \mathbb{R} } \int_{ \mathbb{R}^n } \int_0^1 \frac{ {\rm d} }{ {\rm d} \tau } w (t - \tau \delta s, x - \delta y) \dd \tau \eta (y) \dd y \tilde{\eta} (s) \dd s \nonumber\\
    & + \int_{ \mathbb{R}^n } \int_0^1 \frac{ {\rm d} }{ {\rm d} \tau } w (t, x - \tau \delta y) \dd \tau \eta (y) \dd y \nonumber\\
    = & \underbrace{ \int_0^1 \int_{ \mathbb{R} } \int_{ \mathbb{R}^n } \partial_t w (t - \tau \delta s, x - \delta y) \times (- \delta s) \eta (y) \dd y \tilde{\eta} (s) \dd s \dd \tau }_{ \textrm{ Fubini's theorem } } \nonumber\\
    & + \underbrace{ \int_0^1 \int_{ \mathbb{R}^n } \nabla w (t, x- \tau \delta y) \cdot (- \delta y) \eta (y) \dd y \dd \tau }_{ \textrm{ Fubini's theorem } } \nonumber\\
    = : & I + II \,.
\end{align}
For the first part $I$, noticing that ${\rm supp} \ \eta (y) = \{ y \in \mathbb{R}^n : \, |y| \leq 1 \}$ and ${\rm supp} \ \tilde{\eta} (s) = \{ s \in \mathbb{R} : \, |s| \leq 1 \}$, and by H\"{o}lder's inequality, we have
\begin{align}\label{I}
    I \leq & C \delta \int_0^1 \int_{ \{ s : \, |s| \leq 1 \} } \int_{ \{ y : \, |y| \leq 1 \} } | \partial_t w (t - \tau \delta s, x - \delta y) | \dd y \dd s \dd \tau \nonumber\\
    = & C \delta \int_0^1 \underbrace{ \int_{ \{ s : \, |s| \leq \tau \delta \} } \int_{ \{ y : \, |y| \leq \delta \} } | \partial_t w (t - s, x - y) | \dd y \dd s  }_{ s := s' = \tau \delta s \,, \;\; y := y' = \delta y } \dd \tau \nonumber\\
    \leq & C \delta \int_0^1 \int_{ [0, T] \cup ( [0, T] - \{ s : \, |s| \leq \delta \} ) } \int_{ \Omega \cup ( \Omega - \{ y : \, |y| \leq \delta \} ) } | \partial_t w (t, x) | \dd x \dd t \dd \tau \nonumber\\
    \leq & C \delta \| \partial_t w \|_{L^1 (0, T; L^1 (\Omega))} \,,
\end{align}
where $\Omega - \{ y : \, |y| \leq \delta \} = \{ x - y : \, x \in \Omega, y \in \mathbb{R}^n, |y| \leq \delta \}$ and $[0, T] - \{ s : \, |s| \leq \delta \} = \{ t - s : \, t \in [0, T], s \in \mathbb{R}, |s| \leq 1 \}$.

Similarly, for the second part $II$, it holds
\begin{align*}
    II \leq & C \delta \int_0^1 \int_{ \{ y : \, |y| \leq 1 \} } | \nabla w (t, x- \tau \delta y) | \dd y \dd \tau \\
    \leq & C \delta \int_0^1 \int_{ \Omega \cup ( \Omega - \{ y : \, |y| \leq \delta \} } | \nabla w (t, x) | \dd x \dd \tau \\
    \leq & C \delta \| \nabla w (t, \cdot) \|_{ L^1 ( \Omega ) } \,.
\end{align*}

Therefore,
\begin{equation*}
  \begin{aligned}
    \| w_{t,x}^\delta - w \|_{L^\infty ( t_1, t_2; L^\infty (K) ) } \leq C \delta \| \partial_t w \|_{L^1 (0, T; L^1 (\Omega))} + C \delta \| \nabla w \|_{L^\infty ( 0, T; L^1(\Omega) )} \rightarrow 0
  \end{aligned}
\end{equation*}
as $\delta \rightarrow 0$,

2. With the similar arguments to \eqref{w-delta} and \eqref{I}, we have
\begin{align*}
    & w_{t,x}^\delta (t, x) - w (t, x) = \int_{ \mathbb{R} } \int_{ \mathbb{R}^n } [ w (t - s, x - y) - w (t, x) ] \eta^\delta (y) \dd y \tilde{\eta}^\delta (s) \dd s \\
    = & \int_0^1 \int_{ \mathbb{R} } \int_{ \mathbb{R}^n } \partial_t w (t - \tau \delta s, x - \delta y) \times (- \delta s) \eta (y) \dd y \tilde{\eta} (s) \dd s \dd \tau \\
    & + \int_{ \mathbb{R}^n }  [ w (t, x - \delta y) - w (t, x) ] \eta (y) \dd y \\
    \leq & C \delta \| \partial_t w \|_{ L^1 ( 0, T; L^1 (\Omega) )} + C \int_{ \{ y : |y| \leq 1 \} }  | w (t, x - \delta y) - w (t, x) | \dd y \,.
\end{align*}
Using Minkowski's integral inequality, then we arrive at
\begin{align*}
    & \| w_{t,x}^\delta (t, x) - w (t, x) \|_{ L^\infty ( t_1, t_2; L^p (K) ) } \\
    \leq & C \delta \| \partial_t w \|_{ L^1 (0,T; L^1 (\Omega))} + C \sup_{ 0 \leq t \leq T } \int_{ \{ y : |y| \leq 1 \} } \| w (t, x - \delta y) - w (t, x) \|_{ L^p (K) } \dd y \,.
\end{align*}
By Lemma \ref{local-unif-conti-Lp}, it gives
\begin{align*}
    \| w_{t,x}^\delta - w \|_{ L^\infty (t_1, t_2; L^p (K)) } \rightarrow 0 \, \textrm{ as } \delta \rightarrow 0 \,.
\end{align*}
\end{proof}

Finally, we recall two important lemmas to deal with the product of two weak convergence sequences.

\begin{lem}[\cite{Lions-1996}, Lemma C.1]\label{conti-time-weak}
Let $X$ be a reflexive Banach space, $Y$ be a Banach space, $X \hookrightarrow Y$, $Y'$ is separable and dense in $X'$. Assume a sequence $\{ f_n \}$ satisfies $f_n \in L^\infty (0,T; X)$ and $\partial_t f_n \in L^p (0,T; Y)$ with $1 < p \leq \infty$. Then, $f_n$ is relatively compact in $C_w ( [0, T]; X )$.
\end{lem}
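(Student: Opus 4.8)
The plan is to run an Arzel\`a--Ascoli argument at the level of the scalar functions $t\mapsto\langle\phi,f_n(t)\rangle$, $\phi\in X'$, then diagonalize over a countable dense subset of $X'$, and finally reconstruct the limit inside $X$ using reflexivity. Throughout I read the hypotheses as uniform bounds, $\sup_n\|f_n\|_{L^\infty(0,T;X)}\le M$ and $\sup_n\|\partial_t f_n\|_{L^p(0,T;Y)}\le M$, which is what relative compactness requires.

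First I would record two a priori facts. Since $\partial_t f_n\in L^p(0,T;Y)$, each $f_n$ agrees a.e. with an absolutely continuous $Y$-valued map, so it may be taken everywhere defined with $\|f_n(t)\|_X\le M$ for all $t$ (the $X$-bound survives because the closed $X$-ball is bounded, convex and reflexively weakly compact, hence closed in the coarser $Y$-topology). Moreover, for $0\le s\le t\le T$, H\"older's inequality gives $\|f_n(t)-f_n(s)\|_Y=\|\int_s^t\partial_t f_n(\tau)\,\dd\tau\|_Y\le M\,|t-s|^{1-1/p}$ (Lipschitz when $p=\infty$). This uniform modulus of continuity in $Y$ is exactly where $p>1$ is used: for $p=1$ one would obtain only an $n$-dependent modulus.

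Next, fix $\phi\in X'$ and set $g_n^\phi(t)=\langle\phi,f_n(t)\rangle$, so $|g_n^\phi|\le M\|\phi\|_{X'}$. For equicontinuity I would first treat $\psi\in Y'$, where $|g_n^\psi(t)-g_n^\psi(s)|\le\|\psi\|_{Y'}\|f_n(t)-f_n(s)\|_Y\le M\|\psi\|_{Y'}|t-s|^{1-1/p}$, and then pass to general $\phi$ by density: writing $\phi=\psi+(\phi-\psi)$ with $\psi\in Y'$ and $\|\phi-\psi\|_{X'}$ small, the remainder is controlled uniformly by $2M\|\phi-\psi\|_{X'}$ via the $X$-bound. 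Hence each $\{g_n^\phi\}$ is equicontinuous and uniformly bounded, and Arzel\`a--Ascoli applies. Since $Y'$ is separable and dense in $X'$, the space $X'$ is itself separable; choosing a countable dense set $\{\phi_j\}\subset X'$ and extracting diagonally produces a subsequence, still written $f_n$, along which $g_n^{\phi_j}$ converges uniformly on $[0,T]$ for every $j$. The same density estimate then upgrades this to uniform convergence of $g_n^\phi$ for \emph{every} $\phi\in X'$, to a limit I call $L^\phi(t)$.

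Finally I would reconstruct the limit. For each fixed $t$, the map $\phi\mapsto L^\phi(t)$ is linear with $|L^\phi(t)|\le M\|\phi\|_{X'}$, hence an element of $X''$; by reflexivity of $X$ there is $f(t)\in X$ with $L^\phi(t)=\langle\phi,f(t)\rangle$ and $\|f(t)\|_X\le M$. Each $L^\phi=\langle\phi,f(\cdot)\rangle$ is a uniform limit of continuous scalar functions, so it is continuous, whence $f\in C_w([0,T];X)$; and $\sup_t|\langle\phi,f_n(t)-f(t)\rangle|\to0$ for all $\phi$ is precisely convergence in $C_w([0,T];X)$. As the initial sequence was arbitrary, relative compactness follows. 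I expect the main obstacle to be the two-sided use of the functional-analytic hypotheses: density of $Y'$ in $X'$ is needed both to promote the $Y$-equicontinuity to equicontinuity of $g_n^\phi$ for all $\phi$ and to secure the separability behind the diagonal extraction, while reflexivity of $X$ is indispensable for realizing the pointwise scalar limits as a genuine $X$-valued function rather than a mere element of $X''$.
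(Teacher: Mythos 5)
The paper does not prove this lemma; it is quoted verbatim from Lions (\cite{Lions-1996}, Lemma C.1) and used as a black box, so there is no in-paper proof to compare against. Your argument is correct and is essentially the classical proof of this result: the uniform $Y$-modulus from H\"older on $\partial_t f_n$, equicontinuity of $t\mapsto\langle\phi,f_n(t)\rangle$ first for $\phi\in Y'$ and then for all $\phi\in X'$ by density plus the uniform $X$-bound, Arzel\`a--Ascoli with a diagonal extraction over a countable dense subset of $X'$ (whose separability you correctly deduce from that of $Y'$ and the continuity of the restriction map $Y'\to X'$), and finally reflexivity to realize the pointwise limits as an $X$-valued function. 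The two places most proofs get sloppy --- redefining $f_n$ on the null set so that $\|f_n(t)\|_X\le M$ holds everywhere (via weak compactness and hence $Y$-closedness of the $X$-ball), and the role of $p>1$ in making the modulus uniform --- are both handled explicitly, so I see no gap.
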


\begin{lem}[\cite{Lions-1998}, Lemma 5.1]\label{product-weak-converge}
Assume $g_n \rightharpoonup g$ weakly in $L^{p_1} (0,T; L^{p_2})$, $h_n \rightharpoonup h$ weakly in $L^{q_1} (0,T; L^{q_2})$, $\frac{1}{p_1} + \frac{1}{p_2} = \frac{1}{q_1} + \frac{1}{q_2} = 1$, $1 \leq p_1, p_2 \leq \infty$, $q_1 > 1$. In addition, $\partial_t g_n$ is uniformly bounded in $L^1(0,T; W^{-k,1})$ for some $k \geq 0$, and $\| h_n (x,t) - h_n (x + \xi, t) \|_{ L^{q_1} (0,T; L^{q_2}) } \rightarrow 0$ as $| \xi | \rightarrow 0$ for any $n$. Then, $g_n h_n \rightarrow g h$ in $\mathscr{D}' ( (0, T) \times \Omega )$.
\end{lem}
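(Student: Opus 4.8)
The plan is to establish the distributional convergence by testing against an arbitrary $\phi \in C_c^\infty((0,T)\times\Omega)$ and exploiting the two \emph{complementary} pieces of regularity in the hypotheses: the bound on $\partial_t g_n$ forces compactness of $g_n$ in the \emph{time} variable, while the spatial equi-continuity $\|h_n(\cdot+\xi,t)-h_n(\cdot,t)\|_{L^{q_1}(L^{q_2})}\to 0$ (uniformly in $n$) forces compactness of $h_n$ in the \emph{space} variable. First I would regularize $h_n$ only in space, setting $h_n^\delta = h_n \ast \eta^\delta$ with the mollifier $\eta^\delta$ from \eqref{mollifier1}. The equi-continuity assumption together with Lemma \ref{convolution} gives $\sup_n\|h_n^\delta - h_n\|_{L^{q_1}(L^{q_2})}\to 0$ as $\delta\to 0$, while for each fixed $\delta$ the sequence $h_n^\delta$ is uniformly bounded in $L^{q_1}(0,T;W^{m,q_2})$ for every $m$ and satisfies $h_n^\delta \rightharpoonup h^\delta:=h\ast\eta^\delta$ weakly.

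Next I would extract time-compactness of $g_n$. When $p_1=\infty$ (the case used in \eqref{rho-limit-2} and \eqref{rho-u}), the hypotheses $g_n\in L^\infty(0,T;L^{p_2})$ and $\partial_t g_n$ bounded in $L^1(0,T;W^{-k,1})$ put Lemma \ref{conti-time-weak} at our disposal, so $g_n$ is relatively compact in $C_w([0,T];L^{p_2})$; composing with the compact embedding $L^{p_2}\hookrightarrow\hookrightarrow W^{-1,p_2}$ on the bounded domain $\Omega$ upgrades this to $g_n\to g$ strongly in $C([0,T];W^{-1,p_2})$ (for general $p_1$ one first mollifies $g_n$ in time, which is harmless thanks to the $\partial_t g_n$ bound). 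With this in hand I would pass to the limit in the decomposition
\begin{align*}
g_n h_n - g h = g_n\,(h_n - h_n^\delta) + \big(g_n h_n^\delta - g h^\delta\big) + g\,(h^\delta - h)\,.
\end{align*}
The first term is bounded in $L^1(L^1)$ by $\|g_n\|_{L^{p_1}(L^{p_2})}\|h_n - h_n^\delta\|_{L^{q_1}(L^{q_2})}$ and is therefore small uniformly in $n$ as $\delta\to 0$; the third term tends to $0$ as $\delta\to 0$ because $\|h^\delta-h\|_{L^{q_1}(L^{q_2})}\le \sup_{|\xi|\le\delta}\|h(\cdot+\xi)-h\|_{L^{q_1}(L^{q_2})}\to 0$, the equi-continuity bound being inherited by the weak limit $h$. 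For the middle term, at fixed $\delta$ I would pair the \emph{strong} convergence of $g_n$ in the negative-order space $C([0,T];W^{-1,p_2})$ against the \emph{weak} convergence of the spatially smooth factor $h_n^\delta\phi$ in the dual positive-order space, obtaining $g_n h_n^\delta \rightharpoonup g h^\delta$ in $\mathscr{D}'$; sending first $n\to\infty$ and then $\delta\to 0$ closes the argument.

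The step I expect to be the main obstacle is the strong--weak pairing in the middle term, because the two regularities live on opposite sides of the duality: $g_n$ is controlled only in a space of \emph{negative} spatial order (coming from the time-derivative bound), whereas $h_n^\delta$ is controlled in spaces of \emph{positive} spatial order (coming from the mollification), and the two must be matched through the conjugacy relations among $p_1,p_2,q_1,q_2$ (with $q_1>1$) assumed in the statement. Once this exponent bookkeeping is arranged --- so that $g_n\to g$ strongly in some $L^r(0,T;W^{-1,p_2})$ and $h_n^\delta\phi\rightharpoonup h^\delta\phi$ weakly in the dual $L^{r'}(0,T;W^{1,p_2'})$, which is legitimate on the compact support of $\phi$ with room to spare --- the strong$\times$weak product passes to the limit, and everything else (the mollification estimates, the Rellich embedding, and the H\"{o}lder bound on the diagonal error) is routine.
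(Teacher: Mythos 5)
The paper does not prove this lemma; it is quoted verbatim (up to transcription) from Lions \cite{Lions-1998}, Lemma 5.1. Your argument --- spatial mollification of $h_n$ with uniform smallness of $h_n^\delta-h_n$ from the translation equi-continuity, time compactness of $g_n$ from the $\partial_t g_n$ bound, and the three-term splitting $g_n h_n-gh=g_n(h_n-h_n^\delta)+(g_nh_n^\delta-gh^\delta)+g(h^\delta-h)$ --- is exactly the standard proof of that lemma, so in approach there is nothing to compare against the paper. Two of your implicit corrections of the statement are right and necessary: the translation continuity must hold \emph{uniformly} in $n$ (as you assume; the paper's ``for any $n$'' would make the lemma false, e.g.\ $g_n=h_n=\sin(nx)$), and the H\"older steps require the conjugacy $\tfrac{1}{p_1}+\tfrac{1}{q_1}=\tfrac{1}{p_2}+\tfrac{1}{q_2}=1$ rather than the paper's $\tfrac{1}{p_1}+\tfrac{1}{p_2}=\tfrac{1}{q_1}+\tfrac{1}{q_2}=1$, without which $g_nh_n$ need not even be locally integrable.

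The one step that does not go through as written is the extraction of time compactness. Lemma \ref{conti-time-weak} requires $\partial_t f_n$ bounded in $L^p(0,T;Y)$ with $p>1$, whereas the hypothesis here gives only an $L^1(0,T;W^{-k,1})$ bound; with $p=1$ the sequence is merely bounded in $BV$ in time and can concentrate (e.g.\ $\partial_t g_n=n\mathds{1}_{[t_0,t_0+1/n]}\theta$), so relative compactness in $C_w([0,T];L^{p_2})$ --- and a fortiori strong convergence in $C([0,T];W^{-1,p_2})$ --- is not available, and your proposed repair (mollifying $g_n$ in time) does not help, since controlling $g_n^\sigma-g_n$ uniformly in $n$ would require equi-integrability of $\partial_t g_n$ in $t$, which is not assumed. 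The correct repair is the one Lions uses: for fixed $\theta\in C_c^\infty(\Omega)$ the scalars $\psi_n(t)=\int_\Omega g_n\theta\,{\rm d}x$ are bounded in $L^{p_1}(0,T)\cap BV(0,T)$, hence by Helly's selection theorem a subsequence converges a.e.\ in $t$, and with the $L^{p_1}$ bound this upgrades (Vitali) to strong convergence in $L^s(0,T)$ for every $s<p_1$; equivalently $g_n\to g$ strongly in $L^s(0,T;W^{-1,p_2})$ for $s<p_1$. Since $q_1>1$ guarantees $q_1'<\infty$, one can choose $s$ with $q_1'\le s<p_1$ and your strong$\times$weak pairing for the middle term goes through in $L^s\times L^{s'}$ exactly as you describe; the $C([0,T];\cdot)$ convergence was never needed. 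With that substitution (and the caveat that the dual Rellich embedding $L^{p_2}\hookrightarrow\hookrightarrow W^{-1,p_2}$ requires $p_2>1$), your proof is complete.
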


\medskip \medskip
 \noindent
{\bf Acknowledgements:}  \ \
Liang Guo is   supported  by NSFC (Grant No. 11671193). Fucai Li is  supported in part by NSFC (Grant Nos.11971234 and 11671193)  and a project funded by the priority academic program development of Jiangsu higher education institutions.
Cheng Yu is partially supported by Collaboration Grants for Mathematicians from Simons Foundation.

\bibliographystyle{plain}

\end{document}